        \newtheorem{theorem}{Theorem}
        \newtheorem{example}[theorem]{Example}
        \newtheorem{lemma}[theorem]{Lemma}
        \newtheorem{corollary}[theorem]{Corollary}
        \newtheorem{remark}[theorem]{Remark}
\title{On condition numbers of symmetric and nonsymmetric domain decomposition methods}
\author{Juan Galvis\thanks{Departamento de Matem\'aticas, Universidad Nacional de Colombia, Bogot\'a, Colombia.}}
\begin{document}
\maketitle
\tableofcontents

\begin{abstract}
    Using  oblique projections and angles between subspaces 
    we write condition number estimates for abstract nonsymmetric domain decomposition methods. In particular,  we 
    consider a restricted additive method for the Poisson equation and write a bound for the condition number of the preconditioned operator. We also obtain the non-negativity of the preconditioned operator. Condition number estimates are not enough for the convergence of iterative methods such as GMRES but these bounds may lead to further understanding of nonsymmetric domain decomposition methods. 
    \\
    
{\bf Keywords:}
Restricted Additive Schwarz, Domain Decomposition Methods, Oblique Projections.
\end{abstract}

\section{Introduction}
The restricted additive Schwarz (RAS) was originally introduced by 
Cai and Sarkis in \cite{CaiSarkis} in 1999.  
 RAS outperforms the classical additive Schwarz (AS) preconditioner
 in the sense that it requires fewer iterations, as well as lower
communication and CPU time costs when implemented on distributed memory computers 
\cite{CaiSarkis}. Unfortunately, RAS in its original form is nonsymmetric, and therefore the
conjugate gradient (CG) method cannot be used. Pursuing the analysis of RAS, several interesting 
methods have been developed. Some of these versions have been completely or partially 
analyzed and some of them outperform the classical AS. 
Despite of many contributions, the analysis of this 
method remains incomplete.  

We mention some of the developments related to the RAS method. 
The methods was introduced in \cite{CaiSarkis}.
The authors introduced the RAS as a cheaper and faster variants of the classical AS preconditioner for general sparse linear systems. The new method was shown to 
perform better that the AS according to 
the numerical studies presented there (see also \cite{CaiFarhatSarkis}). The authors 
of \cite{CaiSarkis} quoted that
\begin{quote}
...RAS was found accidentally. While working on a AS/GMRES algorithm in a Euler simulation, we
removed part of the communication routine and surprisingly the “then AS” method converged faster both
in terms of iteration counts and CPU time. We note that RAS is the default parallel preconditioner for
nonsymmetric sparse linear systems in PETSc ...
\end{quote}

Many works have been devoted to RAS and therefore it would be difficult to present a complete review of them. 
Here we mention that in \cite{Szyld,Szyld2} an algebraic convergence analysis is presented. 
In \cite{rasho,CaiFarhatSarkis} 
the authors provide and extension of RAS  using the
so-called harmonic overlaps (RASHO). Both RAS and RASHO outperform their
counterparts of the classical additive Schwarz variants.
An almost optimal convergence theory is
presented for the RASHO.  In \cite{Gander}, 
it is shown 
that a matrix interpretation of RAS  iteration can be related to the the continuous level of the 
underlying problem. The authors explain how this  interpretation reveals why RAS converges faster than classical AS. Still,  bounds for of the condition number of the 
RAS  preconditioned operator remains to be satisfactory. In \cite{MR2104179}, a by now classical book introducing domain decomposition methods, the authors comment
\begin{quote}
To our knowledge, a comprehensive theory of this algorithm is still missing. We note however that the restricted additive Schwarz preconditioner is the default parallel preconditioner for nonsymmetric systems in the PETSc library ...and has been used for the solution of very large problems...  
\end{quote}

 In this paper we re-visit the classical one-level additive method and the restricted additive method  proposed by Cai and Sarkis.   Inspired by these methods, we develop an abstract 
setting that may be useful for further understanding of nonsymmetric methods.  We write a Hilbert space framework for the analysis of the classical additive method. Then we generalize this Hilbert space framework and apply this extension to  write bounds for the condition numbers of several preconditioned operators where the construction of the preconditioner  uses restrictions onto original subdomains (instead of restrictions to the overlapping subdomains). We present abstract results that may be useful to analyze non-symmetric domain decomposition method in general. We illustrate in particular how to use the results for a one-level restricted additive method with similar local problems as the local problems of the OBDD (Overlapping Balancing Domain Decomposition) introduced in \cite{kimn2007obdd}. Several other models and similar methods can be considered as well. For instance, restricted method 
for the elasticity equation, two-level domain decomposition method with classical or modern coarse spaces design, etc.

The rest of the paper is organized as follows. 
In Section \ref{hilbertspace} we review the classical 
domain decomposition methods results in a simple Hilbert space framework. In Section \ref{sec:classical} we recall the classical AS one level method. 
In Section \ref{sec:nonsymmetric} we present the abstract analysis of symmetric methods. We first 
revisit the analysis for symmetric methods using 
projections and angles between sub-spaces. We generalize this analysis to nonsymmetric methods. In particular we apply this analysis to a special family of nonsymmetric methods. In Section \ref{sec:restricted} we define the restricted method that we analyze.  In Section 
\ref{sec:condfor-ras} we  write a condition number estimate of the restricted method of Section \ref{sec:restricted}. 

\section{A Hilbert space framework}\label{hilbertspace}

Let $H$ and $\widehat{G}$ be real Hilbert spaces   with inner products $a(\cdot, \cdot)$ and 
$b(\cdot,\cdot)$, respectively. The case of complex Hilbert spaces is similar. 
Consider  $R:H\to \widehat{G}$ to be a  bounded operator and {\color{black} denote 
by $\| R\|_{a,b}$ its operator norm}.
In domain decomposition methods literature $R$ is referred to as a \emph{restriction operator}. Introduce the transpose operator
 $R^{T,b}: {\color{black}\widehat{G}}\to H$ defined by 
\begin{equation}\label{eq:def:RT}
a(R^{T,b}x,v)= b(x, Rv) \quad  \mbox{ for all } v\in H, x\in \widehat{G}.
\end{equation}
{\color{black} Despite of the fact that $R^{T,b}$ depends on inner products $a$ and 
$b$, our notation makes  explicit only the dependence on $b$.}

Assume there is a (closed) subspace $G\subset {\color{black}\widehat{G}}$ such that 
\begin{equation}\label{ExtensionfromRestriction}
E= R^{T,b}|_G
\end{equation}
is easy to compute. The operator $E$ is known as an \emph{extension operator}. Note also that 
$E:G\to H$ and that we have $E^{T,b}:H\to G\subset{\color{black}\widehat{G}}$ with 
\begin{equation}
    E^{T,b}= \Pi_{G,b} R
\end{equation}
where $\Pi_{G,b}:{\color{black}\widehat{G}}\to G$ is the orthogonal projection on $G$ using the inner product $b$.
We want to study the operator 
\begin{equation}\label{symoperatorwithprojection}
EE^{T,b}=R^{T,b}\Pi_{G,b} R.
\end{equation} 
{\color{black} See Section \ref{sec:classical} for a particular example in the case of a one-level domain decomposition method.} 
This operator is clearly symmetric and non-negative {\color{black} definite} in the $b$ inner product. If we want $EE^{T,b}$ to be non-singular and since 
 $\mathcal{N}(E)=\mathcal{N}(R^{T,b})\cap G$  and $\mathcal{R}(E^{T,b})=\Pi_{G,b} \mathcal{R}(R)$ are $b-$orthogonal in $G$, we need  to be sure $ E^{T,b}= \Pi_{G,b} R$ is 1-1 or, equivalently, $E$ is onto.  A sufficient condition for the symmetric operator $EE^{T,b}=R^{T,b}\Pi_{G,b} R$ to be  invertible is given by the following lemma known as \emph{stable decomposition lemma} or \emph{Lion's lemma} in {\color{black}the} domain decomposition community. For the sake of completeness we show a detailed proof as it is usually presented in {\color{black}the} domain decomposition literature; see for instance  \cite[Chapter 2]{MR2104179} or \cite{MR2445659} and references therein. We note that we do not need to refer to the space ${\color{black}\widehat{G}}$ at this moment. Later we revise some of these inequalities in a more natural way to obtain a sharper estimate.
\begin{lemma}[Lions Lemma]\label{lions} Assume that there exists a bounded right inverse of $E$. That is, there exists a bounded operator $\widehat{E}:H \to G$ such that $E\widehat{E}v=v$ for all $v\in H$. Then, the mapping $EE^{T,b}:H\to H$ is non-singular.  Moreover, we have  
\[
 \| \widehat{E}\|^{-2}_{\color{black}a,b}\|v\|_{a}^2 \leq  \,a(v,EE^{T,b}v) =\| E^{T,b}v\|_{b}^2 \leq  \|E\|^2_{\color{black}b,a}\| v\|_a^2 
\]
for all $v\in H$.

\end{lemma}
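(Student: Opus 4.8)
The plan is to prove the displayed two-sided inequality directly, treating the middle quantity $\|E^{T,b}v\|_b^2$ as the pivot, and then to read off non-singularity of $EE^{T,b}$ from the lower bound. The only property of $\Pi_{G,b}$ used repeatedly is that for $w\in\widehat{G}$ the residual $w-\Pi_{G,b}w$ is $b$-orthogonal to $G$; boundedness of the right inverse $\widehat E$ enters only to guarantee that $\|\widehat E\|_{a,b}$ is finite, hence the lower constant is strictly positive.

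First I would record the central equality. Put $x=E^{T,b}v\in G$. Since $E=R^{T,b}|_G$, the definition \eqref{eq:def:RT} gives $a(v,EE^{T,b}v)=a(R^{T,b}x,v)=b(x,Rv)$; and because $x\in G$ while $Rv-\Pi_{G,b}Rv$ is $b$-orthogonal to $G$, this equals $b(x,\Pi_{G,b}Rv)=b(E^{T,b}v,E^{T,b}v)=\|E^{T,b}v\|_b^2$. This re-exhibits $EE^{T,b}$ as symmetric and non-negative in $a$ and pins down the middle term of the chain.

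For the upper bound I would use duality in $G$ together with \eqref{eq:def:RT}:
\[
\|E^{T,b}v\|_b \;=\; \sup_{0\neq x\in G}\frac{b(E^{T,b}v,x)}{\|x\|_b} \;=\; \sup_{0\neq x\in G}\frac{a(v,Ex)}{\|x\|_b} \;\le\; \|v\|_a\,\|E\|_{b,a},
\]
and squaring yields $\|E^{T,b}v\|_b^2\le\|E\|_{b,a}^2\|v\|_a^2$. For the lower bound I would exploit the right inverse: for $v\neq 0$, using $E\widehat E v=v$ and $\widehat E v\in G$,
\[
\|v\|_a^2 \;=\; a(E\widehat E v,v) \;=\; b(\widehat E v,Rv) \;=\; b(\widehat E v,\Pi_{G,b}Rv) \;=\; b(\widehat E v,E^{T,b}v) \;\le\; \|\widehat E\|_{a,b}\,\|v\|_a\,\|E^{T,b}v\|_b,
\]
invoking \eqref{eq:def:RT}, then $b$-orthogonality of the projection residual to $G\ni\widehat E v$, then Cauchy--Schwarz and the definition of $\|\widehat E\|_{a,b}$. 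Dividing by $\|v\|_a$ and squaring gives $\|\widehat E\|_{a,b}^{-2}\|v\|_a^2\le\|E^{T,b}v\|_b^2$, closing the chain.

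It remains to conclude non-singularity. The lower bound shows that the symmetric, non-negative operator $T:=EE^{T,b}$ is coercive, $a(v,Tv)\ge\|\widehat E\|_{a,b}^{-2}\|v\|_a^2$, so the bounded bilinear form $(v,w)\mapsto a(v,Tw)$ is coercive on $H$ and $T$ is invertible with bounded inverse by Lax--Milgram (equivalently, $T$ is bounded below, hence injective with closed range, and self-adjoint, hence of dense range, hence onto). I do not expect a real obstacle: the argument is routine bookkeeping, the only points needing care being the direction in which the orthogonal-projection identity is applied, the meaning of the two operator-norm subscripts $\|\cdot\|_{a,b}$ and $\|\cdot\|_{b,a}$, and the observation that it is coercivity, not merely injectivity of $E^{T,b}$, that delivers ``non-singular''.
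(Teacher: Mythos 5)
Your proof is correct and follows essentially the same route as the paper: the lower bound is obtained, exactly as in the text, from $\|v\|_a^2=a(E\widehat{E}v,v)=b(\widehat{E}v,E^{T,b}v)$ followed by Cauchy--Schwarz and the identity $a(v,EE^{T,b}v)=\|E^{T,b}v\|_b^2$, and non-singularity is read off from the resulting coercivity. The only cosmetic difference is in the upper bound, where you estimate $\|E^{T,b}v\|_b$ directly by duality over $G$, whereas the paper first establishes $\|EE^{T,b}v\|_a\le\|E\|^2_{b,a}\|v\|_a$ and then applies Cauchy--Schwarz in $a$; both yield the same constant $\|E\|^2_{b,a}$.
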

\begin{proof}
Note that for $v\in H$ we have, 
\begin{eqnarray*}
\|v\|_a^2&=&a(v,v)\\&=& a(E\widehat{E}v,v)\\&=&b(\widehat{E}v,E^{T,b}v)\\&\leq&
b(\widehat{E}v, \widehat{E}v)^{1/2}
b(E^{T,b}v,E^{T,b})^{1/2}\\
&{\color{black}=}& \|\widehat{E}v\|_{b} \,a(v,EE^{T,b}v)^{1/2}\\
&\leq& \|\widehat{E}\|_{\color{black}a,b} \|v\|_a \,a(v,EE^{T,b}v)^{1/2}.
\end{eqnarray*}
Using this last inequality we obtain
\[
 \| \widehat{E}\|^{-2}_{\color{black}a,b}\|v\|_a^2 \leq  \,a(v,EE^{T,b}v).
\]
To obtain the upper bound we proceed as follows using properties of 
subordinated norm of operators,  
\begin{eqnarray*}
\|EE^{T,b}v\|^2_a&\leq &\| E\|^2_{\color{black}b,a}  \|E^{T,b}v\|_{b}^2\\
&=& \|E\|^2_{\color{black}b,a}   b(E^{T,b}v,E^{T,b}v) \\
&\leq& \|E\|^2_{\color{black}b,a}   a(v,EE^{T,b}v) \\
&\leq & \|E\|^2_{\color{black}b,a}\| v\|_a  \|EE^{T,b}v\|_a
\end{eqnarray*}
and therefore $\|EE^{T,b}v\|_a\leq \| E\|^2_{\color{black}b,a}\| v\|_a$. We also have, 
\begin{eqnarray*}
\,a(v,EE^{T,b}v)&\leq &\| v\|_a  \|EE^{T,b}v\|_a \leq \|E\|^2_{\color{black} b,a}\| v\|_a^2.
\end{eqnarray*}
This finishes the proof.
\end{proof}

\begin{remark}\label{remarkT}
Note that what it is needed is the existence of operator $\widetilde{E}:H\to G\subset{\color{black}\widehat{G}}$
such that $T=E\widetilde{E}:H\to H$ is invertible.  In this case we  have that
$\widehat{E}=\widetilde{E}T^{-1}$ is an stable right inverse of $E$.
\end{remark}
If, in addition, the extension operator $E$ comes from a 
restriction operator $R$, as in \eqref{ExtensionfromRestriction}, we can state the following corollaries. 
\begin{corollary}\label{corolaryR}
Let $R$ be a restriction operator such that 
$E=R^{T,b}|_G$. Assume that there exits a bounded operator $\mathcal{J}_{\color{black}b}:{\color{black}\widehat{G}}\to G\subset{\color{black}\widehat{G}}$ such 
that $R^{T,b} \mathcal{J}_bRv=v$ for all $v\in H$.  Then, the mapping $EE^{T,b}
=R^{T,b}\Pi_{G,b} R:H\to H$ is non-singular with
\[
 \| \mathcal{J}_{\color{black}b}R\|^{-2}_{\color{black}a,b}\|v\|_a^2 \leq  \,a(v,EE^{T,b}v) =\| E^{T,b}v\|_{b}^2 \leq  \|R\|^2_{\color{black}a,b}\| v\|_a^2 
\]
for all $v\in H$.

\end{corollary}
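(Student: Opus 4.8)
The plan is to reduce the statement to Lemma \ref{lions} together with Remark \ref{remarkT}, so that essentially no new estimate has to be proved from scratch; the only genuine work is bookkeeping about the operator norms and identifying the right candidate for the right inverse. First I would observe that the hypothesis gives an operator $\mathcal{J}_b:\widehat{G}\to G$ with $R^{T,b}\mathcal{J}_bRv=v$ for all $v\in H$. Since $E=R^{T,b}|_G$ and $\mathcal{J}_bR$ maps into $G$, we may write $R^{T,b}\mathcal{J}_bR = E(\mathcal{J}_bR)$, so setting $\widetilde{E}:=\mathcal{J}_bR:H\to G$ we have $E\widetilde{E}=\mathrm{Id}_H$, which is trivially invertible. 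By Remark \ref{remarkT} (applied with $T=\mathrm{Id}$, so $\widehat{E}=\widetilde{E}$), $\widetilde{E}=\mathcal{J}_bR$ is itself a bounded right inverse of $E$, and Lemma \ref{lions} applies verbatim.

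Next I would feed $\widehat{E}=\mathcal{J}_bR$ into the conclusion of Lemma \ref{lions}. The lower bound becomes $\|\mathcal{J}_bR\|_{a,b}^{-2}\|v\|_a^2\le a(v,EE^{T,b}v)$, which is exactly what is claimed. For the middle equality $a(v,EE^{T,b}v)=\|E^{T,b}v\|_b^2$ I would just recall $a(v,EE^{T,b}v)=a(E^{T,b}v\text{ paired back})$; more precisely, $a(v,EE^{T,b}v)=a(E(E^{T,b}v),v)=b(E^{T,b}v,E^{T,b}v)=\|E^{T,b}v\|_b^2$ by the defining relation \eqref{eq:def:RT} of the transpose. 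For the upper bound, Lemma \ref{lions} gives $\|E\|_{b,a}^2\|v\|_a^2$; here I would use that $E=R^{T,b}|_G$ is a restriction of $R^{T,b}$ to a subspace, hence $\|E\|_{b,a}\le\|R^{T,b}\|_{b,a}=\|R\|_{a,b}$, the last equality being the standard fact that an operator and its (Hilbert-space) adjoint have equal norm. Chaining these inequalities yields the stated $\|R\|_{a,b}^2\|v\|_a^2$ upper bound, and the non-singularity of $EE^{T,b}$ is inherited directly from Lemma \ref{lions}.

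The only point requiring a little care — and what I would flag as the main (though modest) obstacle — is the norm identity $\|R^{T,b}\|_{b,a}=\|R\|_{a,b}$, because the transpose here is taken with respect to two different inner products $a$ and $b$ rather than a single one. This still holds: for $x\in\widehat{G}$, $v\in H$, the identity $a(R^{T,b}x,v)=b(x,Rv)$ together with Cauchy–Schwarz in both inner products gives $\|R^{T,b}x\|_a\le\|R\|_{a,b}\|x\|_b$ and symmetrically $\|Rv\|_b\le\|R^{T,b}\|_{b,a}\|v\|_a$, whence the two operator norms coincide. (If one prefers, one can bypass this entirely and simply leave the upper bound as $\|E\|_{b,a}^2$, but replacing it by $\|R\|_{a,b}^2$ is the cleaner, computable quantity and matches the analogous statements elsewhere in the paper.) With that identity in hand the corollary is immediate, so I would keep the proof to just a few lines: introduce $\widetilde{E}=\mathcal{J}_bR$, invoke Remark \ref{remarkT} and Lemma \ref{lions}, then substitute the two norm bounds.
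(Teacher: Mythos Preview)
Your proposal is correct and follows exactly the approach the paper intends: the corollary is stated without proof precisely because setting $\widehat{E}=\mathcal{J}_bR$ and invoking Lemma~\ref{lions} (via Remark~\ref{remarkT}) gives the result immediately, with the upper bound coming from $\|E\|_{b,a}\le\|R^{T,b}\|_{b,a}=\|R\|_{a,b}$. Your attention to the adjoint norm identity across two different inner products is a nice clarification of a step the paper leaves implicit.
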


\begin{corollary}\label{corolaryR2}
Let $R$ be a restriction operator such that 
$E=R^{T,b}|_G$. Assume that there exits a bounded operator $\widehat{E}:H\to G\subset 
{\color{black}\widehat{G}}$ such that $R^{T,b} \widehat{E}v=v$ for all $v\in H$.  Then, the mapping $R^{T,b} R:H\to H$ is non-singular with
\[
 \| \widehat{E}\|^{-2}_{\color{black}a,b}\|v\|_a^2 \leq  \,a(v,R^{T,b}Rv) =\| Rv\|_{b}^2 \leq  \|R\|^2_{\color{black}a,b}\| v\|_a^2 
\]
for all $v\in H$.

\end{corollary}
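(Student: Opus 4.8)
The statement is the specialization of Lemma~\ref{lions} obtained by taking the subspace in that lemma to be all of $\widehat{G}$. Indeed, the hypothesis $R^{T,b}\widehat{E}v=v$ for all $v\in H$ says precisely that $\widehat{E}:H\to\widehat{G}$ (the inclusion $G\subset\widehat{G}$ making $\widehat{E}$ into such a map) is a bounded right inverse of the extension operator $E:=R^{T,b}|_{\widehat{G}}=R^{T,b}:\widehat{G}\to H$ associated, via \eqref{ExtensionfromRestriction}, with the choice $G=\widehat{G}$. The plan is therefore to invoke Lemma~\ref{lions} with $E=R^{T,b}$ and read off the conclusion after identifying the operators that appear.

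Two elementary identifications are needed. First, transposition with respect to the pair $(a,b)$ is involutive: unwinding \eqref{eq:def:RT} twice gives $(R^{T,b})^{T,b}=R$, so $E^{T,b}=R$ and $EE^{T,b}=R^{T,b}R$, and the middle quantity $\|E^{T,b}v\|_b^2$ of Lemma~\ref{lions} becomes $\|Rv\|_b^2$. Second, an operator between Hilbert spaces and its transpose have the same operator norm, so the constant $\|E\|_{b,a}^2=\|R^{T,b}\|_{b,a}^2$ on the right of Lemma~\ref{lions} equals $\|R\|_{a,b}^2$, while the constant $\|\widehat{E}\|_{a,b}^{-2}$ on the left is unchanged (restricting the codomain of $\widehat{E}$ from $\widehat{G}$ to $G$ does not affect its norm). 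Substituting these into the conclusion of Lemma~\ref{lions} yields the non-singularity of $R^{T,b}R:H\to H$ together with the asserted chain of inequalities, and the identity $a(v,R^{T,b}Rv)=b(Rv,Rv)=\|Rv\|_b^2$ is just \eqref{eq:def:RT} combined with the symmetry of $a$.

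If one prefers a self-contained argument it suffices to repeat the proof of Lemma~\ref{lions} verbatim: from $v=R^{T,b}\widehat{E}v$ and \eqref{eq:def:RT},
\[
\|v\|_a^2=a(R^{T,b}\widehat{E}v,v)=b(\widehat{E}v,Rv)\le\|\widehat{E}v\|_b\,\|Rv\|_b\le\|\widehat{E}\|_{a,b}\,\|v\|_a\,\|Rv\|_b,
\]
which, after dividing by $\|v\|_a$ and squaring, gives the lower bound; the upper bound is the definition of $\|R\|_{a,b}$; and, since $R^{T,b}R$ is $a$-symmetric, $a$-non-negative and, by the lower bound, $a$-coercive, its invertibility follows from Lax--Milgram (or from the spectral theorem). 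There is no real obstacle here: the only point requiring a little care is the bookkeeping of which inner product each transpose and each operator norm refers to, and the observation that allowing $\widehat{E}$ to take values in the subspace $G$ rather than in all of $\widehat{G}$ changes nothing in the estimates.
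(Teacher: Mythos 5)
Your proposal is correct and matches the paper's (implicit) argument: the paper states Corollary~\ref{corolaryR2} without proof as a direct consequence of Lemma~\ref{lions}, and your specialization $G=\widehat{G}$, $E=R^{T,b}$, $E^{T,b}=R$, together with the identity $\|R^{T,b}\|_{b,a}=\|R\|_{a,b}$, is exactly the intended reduction. The self-contained rerun of the Lions argument is also correct and is just the same computation with $E\widehat{E}$ replaced by $R^{T,b}\widehat{E}$.
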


{\color{black} Let $L:H\mapsto \mathbb{R}$ be a bounded linear functional on $H$}. Denote by $u\in H$  the solution of the following variational equation, 
 \begin{equation}\label{problem}
 a(u,v)=L(v) \quad \mbox{ for all } v\in H.
 \end{equation}
Assuming that $E$ is easy to compute. We see that, 
 for the solution $u$, 
$E^{T,b}u$ is possible to compute using this variational equation (without explicitly knowing or computing the function $u$). In fact, we have  
 \begin{equation}\label{computationofET}
 b(E^{T,b}u, \phi)=a(u, E\phi)=L( E\phi) =LE\phi \quad \mbox{ for all } \phi\in G.
 \end{equation}
This equation might be easier to solve numerically than the original problem. 
Therefore, we can alternatively compute the solution of (\ref{problem}) by iteratively 
solving the equation, 
\begin{equation}\label{preconditioned}
EE^{T,b} u = \widetilde{L} 
\end{equation}
{\color{black} where the right hand side $\widetilde{L}$ can be computed by solving \eqref{computationofET} 
and applying the extension operator $E$}.  When implementing an iterative method to solve \eqref{preconditioned}, in 
each iteration we have to apply the operator $EE^{T,b}$ to a residual vector, say 
$r$. More precisely, we have to
\begin{enumerate}
\item Compute $x=E^{T,b}r$, this can be done by solving the equation
 \begin{equation}\label{computationofET2}
 b(x, \phi)=a(r, E\phi) \quad \mbox{ for all } \phi\in H.
 \end{equation}
  In terms of the restriction operator $R$ we have $x=\Pi_{G,b} Rr$.
 \item Compute $s=Ex=EE^{T,b}r$ by applying the extension operator $E$, that is 
  $s=R^{T,b}\Pi_{G,b} R r=E\Pi_{G,b} R r$ {\color{black}which} is assumed possible and numerically efficient to compute.

\end{enumerate} 

{\color{black}
The practicality of using the iteration
depends on the possibility to inexpensively compute the right hand side $\widetilde{L}$ and the 
number of iterations needed until convergence.
The condition of $EE^{T,b}$ give us some information about the difficulty in solving the corresponding equation. 
In particular, since  $EE^{T,b}$ is symmetric (and positive-definite as we will see later),   PCG could be applied. In this case  the performance of the iterative procedure depends 
on the condition number of the associated operator equation. If we use the spectral 
condition number of the operator $EE^{T,b}$, we see from Lemma \ref{lions} that 
\[
\kappa_{spectral}(EE^{T,b})\leq \|\widehat{E}\|^2_{a,b} \|E\|^2_{b,a}.
\]
Then, for some iterative methods such as $CG$, the number of iterations for solving the equation (\ref{preconditioned}) (up to a desired tolerance) will depend on 
$\|\widehat{E}\|^2_b \|E\|^2_b$. }

In general, the condition number of an operator $T:H\to H $ is defined by  
\[
\kappa(T)= \|T^{-1}\|_{\color{black}a,a} \|T\|_{\color{black}a,a}.
\]
{\color{black} For general iterative methods such as GMRES that could be applied to non-symmetric problems, bounds for the condition number alone are not enough for the convergence of the method. In this case we need information about the distribution of the eigenvalues. Nevertheless, condition number bounds may lead to further understanding of iterative methods applied to non-symmetric methods.}

\section{Classical additive method for Laplace equation}
\label{sec:classical}
In this section we use the Hilbert space framework above to review the 
analysis of the classical additive method.
As usual, we consider a subdomain $D$ with a non-overlapping partition of the domain into subdomains $\{ D_\ell \}_{\ell=1}^{N_S}$. By enlarging these subdomains an specific width $\delta$ we obtain and overlapping decomposition 
$\{ \mathcal{O}_\ell \}_{\ell=1}^{N_S}$. For more details 
see \cite{MR2104179}.

Let $H=H^1_0(D)$,  
${\color{black}\widehat{G}}=\times_{\ell=1}^{N_S} 
H^1(\mathcal{O}_\ell)$
and $G=\times_{\ell=1}^{N_S} 
H^1_0(\mathcal{O}_\ell)\subset {\color{black}\widehat{G}}$. 
In this case consider 
\[
a(u,v)=\int_D \nabla u\nabla v \quad \mbox{ for all } 
u,v\in H. 
\]
Denoting by $\{v_\ell\}$ the elements of $
{\color{black}\widehat{G}}$, we 
define for {\color{black}$\{v_\ell\}, \{w_\ell\}\in 
\widehat{G}$}
\[
b(\{v_\ell\},{\color{black}\{w_\ell\}})=
\sum_{\ell=1}^{N_S}\int_{\mathcal{O}_\ell} \nabla 
v_\ell\nabla {\color{black}w_\ell}+\int_{\partial\mathcal{O}_\ell} v_\ell{\color{black} w_\ell}
=\sum_{\ell=1}^{N_S} b_\ell(v_\ell,w_\ell)+b_\ell^{\partial}(v_\ell,{\color{black}w_\ell})
\]
where we have put  
$b_\ell(v_\ell,w_\ell)=\int_{\mathcal{O}_\ell} 
\nabla v_\ell\nabla w_\ell $ and $b_\ell^{\partial}(v_\ell,w_\ell)=\int_{\partial\mathcal{O}_\ell} v_\ell w_\ell$.

\begin{remark}[Norm boundary term]
The {\color{black}role} of $b_\ell^{\partial}$ is not essential and can be replaced by any other bilinear form that 
vanish for functions on $G$ which makes $b$ {\color{black} a positive definite bilinear form on ${G}$}.
\end{remark}

Introduce also $R:H\to {\color{black}\widehat{G}}$ defined by 
$R u = \{ u|_{\mathcal{O}_\ell}\}$. Equation  \eqref{eq:def:RT} defining $R^{T,b}$ corresponds to 
\[
\int_D \nabla R^{T,b}{\color{black}\{v_\ell\}}\nabla z=
b(\{v_\ell\},\{z_\ell|_{\mathcal{O}\ell}\})\quad \mbox{ for all }
z\in H.
\]
{\color{black}This definition} implies that $E: G\to H$ {\color{black}
where} $E=R^{T,b}|_G$, is given by
\[
E \{v_\ell\} = \sum_{\ell=1}^{N_S} E_\ell v_\ell
\]
where $E_\ell$ is the extension by zero outside $\mathcal{O}_\ell$  operator. To see this note that 
\[
\int_D \nabla E\{v_\ell\}\nabla z=
\sum_{\ell=1}^{N_S}\int_{\mathcal{O}_\ell} \nabla 
v_\ell\nabla z
\quad \mbox{ for all }
z\in H.
\]

We have that $E^{T,b} : H\to G$ is given by 
\[
b(E^{T,b}u,\{v_\ell\})= a(u, E\{v_\ell\}).
\]
Note that $E^{T,b}u=\{E^{T,b}_\ell u\}$ where $E^{T,b}_\ell u$ solves the local equation
\[
\int_{\mathcal{O}_\ell} \nabla E^{T,b}_\ell u\nabla v_\ell = 
\int_{D} \nabla u \nabla E_\ell v_\ell \mbox{ for all } v_\ell \in H^1_0(\mathcal{O}_\ell).
\]
Observe that $E_\ell^{T,b}$ can be obtained by solving a local problem. {\color{black} We define the additive method  by $P_{add}EE^{T,b}$}. Therefore,
\[ 
P_{add}u=EE^{T,b} u = \sum_{\ell=1}^{N_S} E_\ell E^{T,b}_\ell u =
\left(\sum_{\ell=1}^{N_S} P_\ell \right) u.
\]
{\color{black}Here we  denote $P_{\ell}=E_\ell E^{T,b}_\ell $.}
{\color{black}The existence of a right inverse can be stated as follows as it is common in domain 
decomposition literature. In fact, to obtain the stable inverse assume:\\
\begin{itemize}
\item {\bf Stable decomposition}: There exists a constant $C_E$ such that  for all $v\in V$ there exist $v_\ell\in H^1_0(
\mathcal{O}_\ell)$, $\ell=1,\dots, N_S$ such 
that $v=\sum_{\ell=1}^{N_S} E_\ell v_\ell$ and 
\begin{equation}\label{eq:establedecomposition}
\sum_{\ell=1}^{N_S} b_\ell(v_\ell,v_\ell)\leq C_E^2 a(v,v).
\end{equation}

\item {\bf Strengthened  Cauchy inequalities}. There exits a matrix ${\mu}=(\mu_{\ell k})_
{\ell,k}$ with $\mu_{\ell k}\leq 1$ and such that 
\[
a(E_\ell v_\ell, E_k u_k ) \leq \mu_{\ell k} b_\ell(v_\ell,v_\ell)^{1/2}b_k(v_k,v_k)^{1/2}.
\]
\end{itemize}
The stable decomposition assumption clearly implies the existence of $\widehat{E}$ and $\|\widehat{E}\|_{b,a}\leq C_E$. In fact, 
$\widehat{E}v=\{ v_\ell\}$ where the functions $v_\ell$ are the ones given by the 
stable decomposition assumption.}\\
By using bilinearity and vector Chauchy inequalities, this clearly implies that 
\begin{eqnarray*}
a(E\{v_\ell\},E\{v_k\})&=&\sum_{\ell,k}^{N_S} a(E_\ell  u, E_{k} u)\\
&\leq &\sum_{\ell,k} \mu_{\ell k} b_\ell(v_\ell, v_\ell)^{1/2} b_k(v_k, v_k)^{1/2}\\
&\leq& \rho(\mu) \left(\sum_{\ell=1}^{N_S} b_\ell(v_\ell,v_\ell) \right)
\leq \rho(\mu)  b\left(\{v_\ell\},\{v_\ell\}\right),
\end{eqnarray*}
where $\rho({\color{black}\mu})$ is the spectral radius of the matrix $\mu$ above. Then $\|E\|_{\color{black}b,a}\leq \sqrt{ \rho(\mu)}$.  { \color{black}Using this and the fact that }$\|\widehat{E}\|_{\color{black}b,a}\leq C_E$  in 
Lemma \ref{lions} we have the following result.
\begin{corollary}
For all $u\in V$ we have 
\begin{equation}
C_E^{-2}a(u,u)\leq a(P_{add}u,u)\leq \rho(\mu) a(u,u)
\end{equation}
\end{corollary}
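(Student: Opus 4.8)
The plan is to read the corollary off directly from Lemma~\ref{lions} (Lions' Lemma) applied to the concrete operators $E$ and $E^{T,b}$ of this section, once the two operator-norm bounds $\|\widehat{E}\|_{a,b}\le C_E$ and $\|E\|_{b,a}^2\le\rho(\mu)$ are in hand. Since $P_{add}=EE^{T,b}$ by definition, Lemma~\ref{lions} already yields, for every $u\in V=H$,
\[
\|\widehat{E}\|_{a,b}^{-2}\,a(u,u)\ \le\ a(u,EE^{T,b}u)=a(P_{add}u,u)\ \le\ \|E\|_{b,a}^2\,a(u,u),
\]
so the whole task reduces to inserting the two bounds.

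For the upper bound I would reuse the strengthened Cauchy estimate already carried out just before the statement: summing $a(E_\ell v_\ell,E_k v_k)\le\mu_{\ell k}\,b_\ell(v_\ell,v_\ell)^{1/2}b_k(v_k,v_k)^{1/2}$ over $\ell,k$ and applying a vector Cauchy--Schwarz inequality gives $a(E\{v_\ell\},E\{v_\ell\})\le\rho(\mu)\sum_\ell b_\ell(v_\ell,v_\ell)\le\rho(\mu)\,b(\{v_\ell\},\{v_\ell\})$, so that $\|E\|_{b,a}\le\sqrt{\rho(\mu)}$; here one uses that on all of $\widehat{G}$ the form $b$ dominates $\sum_\ell b_\ell$, the extra terms $b_\ell^\partial$ contributing nonnegatively.

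For the lower bound I would produce $\widehat{E}$ explicitly from the stable decomposition hypothesis. Given $v\in V$, choose the $v_\ell\in H^1_0(\mathcal{O}_\ell)$ with $v=\sum_\ell E_\ell v_\ell=E\{v_\ell\}$ guaranteed by \eqref{eq:establedecomposition}, and set $\widehat{E}v=\{v_\ell\}$; then $E\widehat{E}v=v$. The one step that is not purely formal is the identification $\|\widehat{E}v\|_b^2=\sum_\ell\big(b_\ell(v_\ell,v_\ell)+b_\ell^\partial(v_\ell,v_\ell)\big)=\sum_\ell b_\ell(v_\ell,v_\ell)$, valid because each $v_\ell\in H^1_0(\mathcal{O}_\ell)$ has vanishing trace on $\partial\mathcal{O}_\ell$ so that $b_\ell^\partial(v_\ell,v_\ell)=0$; combined with \eqref{eq:establedecomposition} this gives $\|\widehat{E}v\|_b^2\le C_E^2\,a(v,v)$, i.e.\ $\|\widehat{E}\|_{a,b}\le C_E$. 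If one only wanted a right inverse up to composition with an invertible operator, Remark~\ref{remarkT} could be invoked instead.

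Substituting $\|E\|_{b,a}^2\le\rho(\mu)$ and $\|\widehat{E}\|_{a,b}^{-2}\ge C_E^{-2}$ into the inequalities of Lemma~\ref{lions} yields exactly $C_E^{-2}a(u,u)\le a(P_{add}u,u)\le\rho(\mu)\,a(u,u)$. I do not expect a genuine obstacle here: the content lies entirely in constructing $\widehat{E}$ from the stable decomposition and in noting that on $G$ the inner product $b$ collapses to the sum of the local Dirichlet energies, both of which follow immediately from the hypotheses; the only implicit ingredient is finiteness of $\rho(\mu)$, which is part of the strengthened Cauchy assumption (and in practice comes from a finite-overlap/coloring argument).
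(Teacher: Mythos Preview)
Your proposal is correct and follows essentially the same route as the paper: the paper derives $\|E\|_{b,a}\le\sqrt{\rho(\mu)}$ from the strengthened Cauchy inequalities and $\|\widehat{E}\|_{a,b}\le C_E$ from the stable decomposition, and then explicitly invokes Lemma~\ref{lions} to obtain the corollary. Your added remark that the boundary terms $b_\ell^\partial$ vanish on $G$ (since each $v_\ell\in H^1_0(\mathcal{O}_\ell)$) is a useful clarification that the paper leaves implicit.
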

where $C_E$ is the stable decomposition constant and $\rho({\color{black}\mu})$ is the spectral radius of the matrix {\color{black}$\mu$} above.

\begin{remark} \label{remarkclassicalSD}
Let us consider the case of the one level additive method setting. More levels can be analyzed in a similar way.  In the one level setting, with original domains of diameter 
$\tau$ and overlap of size $\delta$ a usual bound for $C_E$ is given as follows by constructing a stable decomposition as follows; see 
\cite{MR2104179}. Start by constructing cut functions $\eta_\ell$ such that
\begin{equation}\label{eq:def:cutoff}
\eta_\ell(x)=1 \mbox{ for all } x\in D_\ell, \mbox{ } \eta_\ell(x)=0 \mbox{ for all } x\not\in \mathcal{O}_\ell, \mbox{ }  
|\nabla \eta_\ell(x)|\leq C_{cut} \frac{1}{\delta}.
\end{equation}
Define the partition of unity function 
\[
\chi_\ell= \frac{\eta_\ell}{\eta} \quad \mbox{ where }
\eta= \sum_{\ell=1}^{N_S}\eta_\ell .
\]
We see that 
$$
\chi_\ell(x)=1 \mbox{ for all } x\in D_\ell\setminus 
\cup_{\ell'\not=\ell}\mathcal{O}_{\ell'}, \mbox{ } \chi_\ell(x)=0 \mbox{ for all } x\not\in \mathcal{O}_\ell,$$
and therefore
\begin{equation}\label{eq:def:pu}
|\nabla\chi_\ell(x)|\leq C_{pu} \frac{1}{\delta}.
\end{equation}
We should have $C_{cut}\leq C_{pu}$. Then define {\color{black}the stable right inverse by}
$\widehat{E}u=\{ \chi_\ell u\}$. Denoting 
\begin{equation}\label{eq:def:nu} 
Neigh(j)=\left\{ j : \mathcal{O}_j\cap \mathcal{O}_i \not = \emptyset\right\}
\mbox{ and } 
\nu =\max_{j} \# Neigh(j)
\end{equation}
we have (see \cite{MR2104179})
\[
C_E^2\leq  \nu C_{pu}\left(1+\frac{1}{\tau\delta}\right).
\]
The {\color{black}norm of $E$} and $\rho(\mu)$ are bounded by
\[
\|E\|^2_{\color{black}b,a}\leq {\color{black}  \rho(\mu)} \leq  \nu.
\]

\end{remark}

In case we want to approximate the solution of  problem (\ref{problem}), 
we see that $u$ also solves the operator equation, 
\begin{equation}\label{preconditioned1Ladd}
EE^{T,b} u = \left(\sum_{\ell=1}^{N_S}  P_\ell\right) u=\widetilde{L}.
\end{equation}
{\color{black}Here $\widetilde{L}$ is obtaining by 
$E$-assembling  the solutions of the local problems, 
\[
b_\ell(E_\ell^{T,b} u,v_\ell)=L(E_\ell v_\ell) \mbox{ for all } v_\ell\in H^1_0(\mathcal{O}_\ell).
\]
The linear system 
(\ref{preconditioned1Ladd}) is usually better conditioned that the original linear system 
$a(u,v)=L(v)$.}

\section{Nonsymmetric methods obtained 
by changing restrictions and the inner product}\label{sec:nonsymmetric}
We use the Hilbert space framework introduced in 
Section \ref{hilbertspace}. 
Recall that we have 
$H$ and $G\subset {\color{black}\widehat{G}}$ Hilbert spaces with inner products $a(\cdot, \cdot)$ and 
$b(\cdot,\cdot)$, respectively. We also used  the bounded restriction operator
 $R:H\to {\color{black}\widehat{G}}$ {\color{black}for the definition of the extension operator $E$}.

{\color{black} To develop a framework for non-symmetric methods} we additionally introduce a second bi-linear form 
 $c(\cdot,\cdot)$ defined on ${\color{black}\widehat{G}}$.
 Let  us introduce a possibly different and bounded restriction 
 operator  $S: H\to {\color{black}\widehat{G}}$
 and the transpose $S^{T,c}$ defined analogously to \eqref{eq:def:RT} by 
 \begin{equation}\label{eq:def:RTc}
a(S^{T,c}\phi,v)= c(\phi, Sv) \quad  \mbox{ for all } v\in H.
\end{equation}

Define $F=S^{T,c}|_G$ as a second extension operator. 
As before, assume that there is an stable left-inverse for $F$, say $\widehat{F}$ such that $F\widehat{F}v=v$
for all $v\in H$ (that is, $\widehat{F}$ is bounded in the 
$c$ and $b$ inner product norms). We can then conclude about $F$ and $F^{T,c}$ similar inequalities than the given before in the case $c$ is symmetric and positive definite.  In particular $\color{black}F^{T,c}$ is a bijective application from $H$ onto 
$\mathcal{R}(F^{T,c})$.

\begin{corollary}\label{corolaryRc}
Let $S$ be a restriction operator {\color{black}and}  
$F=S^{T,c}|_G$. Assume that there exits a bounded right inverse of  $F=S^{T,c}|_G$, 
say $\widehat{F}$.  Then, the mapping
$FF^{T,c}=R^{T,c} \Pi_{G,c} R$ is non-singular with
\[
 \| \widehat{F}\|^{-2}_{\color{black}a,c}\|v\|_a^2 \leq  \,a(v,FF^{T,c}v) =\| F^{T,c}v\|_c^2 \leq  \|F\|^2_{\color{black}c,a}\| v\|_a^2 
\]
and 
\[
 \| \widehat{F}\|^{-2}_{a,c}\|v\|_a^2 \leq  \,a(v,S^{T,c}Sv) =\| Sv\|_c^2 \leq  \|S\|^2_{\color{black}c,a}\| v\|_a^2 
\]
for all $v\in H$.

\end{corollary}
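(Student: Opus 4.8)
The plan is to reduce the statement to the symmetric theory of Section~\ref{hilbertspace}, since the hypotheses here are precisely those of Lemma~\ref{lions} and Corollary~\ref{corolaryR2} after relabelling. In the regime under consideration $c$ is a symmetric, positive definite bilinear form on $\widehat{G}$, so $(\widehat{G},c)$ is a real Hilbert space, and since $S:H\to\widehat{G}$ is bounded, equation \eqref{eq:def:RTc} defines $S^{T,c}$ through the Riesz representation theorem exactly as \eqref{eq:def:RT} defined $R^{T,b}$. Consequently the pair $(S,F)$ with $F=S^{T,c}|_G$, together with the right inverse $\widehat{F}$ of $F$, plays verbatim the roles of $(R,E)$ with $E=R^{T,b}|_G$ and $\widehat{E}$, with $b$ replaced by $c$ throughout.

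First I would record the two identities that carry over unchanged. Let $\Pi_{G,c}:\widehat{G}\to G$ denote the $c$-orthogonal projection onto $G$. Because $F$ is the restriction of $S^{T,c}$ to $G$, for $v\in H$ and $\phi\in G$ one has $c(F^{T,c}v,\phi)=a(v,F\phi)=a(v,S^{T,c}\phi)=c(Sv,\phi)$, whence $F^{T,c}=\Pi_{G,c}S$ and therefore $FF^{T,c}=S^{T,c}\Pi_{G,c}S$; moreover $a(v,FF^{T,c}v)=c(F^{T,c}v,F^{T,c}v)=\|F^{T,c}v\|_c^2$. Applying Lemma~\ref{lions} with $(a,c,F,\widehat{F})$ in place of $(a,b,E,\widehat{E})$ yields at once the non-singularity of $FF^{T,c}$ and the first displayed chain of inequalities.

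For the second chain I would mimic the proof of Corollary~\ref{corolaryR2}. Since $\Pi_{G,c}$ is an orthogonal projection in the $c$ inner product it is norm-nonincreasing, so $\|F^{T,c}v\|_c=\|\Pi_{G,c}Sv\|_c\le\|Sv\|_c$, while $a(v,S^{T,c}Sv)=c(Sv,Sv)=\|Sv\|_c^2$ directly from \eqref{eq:def:RTc}. Combining with the lower bound already obtained gives $\|\widehat{F}\|_{a,c}^{-2}\|v\|_a^2\le\|F^{T,c}v\|_c^2\le\|Sv\|_c^2=a(v,S^{T,c}Sv)$, and the upper bound $\|Sv\|_c^2\le\|S\|_{c,a}^2\|v\|_a^2$ is just boundedness of $S$; invertibility of $S^{T,c}S$ follows exactly as in Corollary~\ref{corolaryR2}. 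I do not expect a genuine obstacle: the whole content is that of the symmetric case, and the only points requiring care are confirming that $c$ is here assumed to be a bona fide inner product on $\widehat{G}$ (symmetric, positive definite) so that Section~\ref{hilbertspace} applies unchanged, and reading the operator identity in the statement as $FF^{T,c}=S^{T,c}\Pi_{G,c}S$.
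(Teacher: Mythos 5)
Your proposal is correct and follows essentially the route the paper intends: the paper gives no separate proof of this corollary, presenting it as an immediate relabelling of Lemma~\ref{lions} and Corollary~\ref{corolaryR2} with $(b,R,E,\widehat{E})$ replaced by $(c,S,F,\widehat{F})$ under the standing assumption that $c$ is symmetric positive definite. Your explicit verification of $F^{T,c}=\Pi_{G,c}S$, the contraction property of $\Pi_{G,c}$ used to chain the two displayed inequalities, and your reading of the (typographical) operator identity as $FF^{T,c}=S^{T,c}\Pi_{G,c}S$ are all consistent with the paper's setup.
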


We want to study the nonsingularity of the operator $FE^{T,b}:H\to H$. Note that 
\begin{equation}\label{eq:FETbSTcPiGbR}
    FE^{T,b}= S^{T,c} \Pi_{G,b}R.
\end{equation}
See \eqref{symoperatorwithprojection}. This operator is  nonsymmetric for general bi-linear forms $b$ and $c$. This is due to the fact that $\Pi_{G,b}$ might not be symmetric in the $c$ bilinear form.  

{\color{black}
\begin{example}\label{especial}
 As a particular case of our general construction we can put $S=R$. In this case, 
$F=R^{T,c}|_G$ and  $F^{T,c}=\Pi_{G,c} R$. We can then obtain the operators 
 \begin{equation}\label{eq:FETbRTcPiGbR}
FE^{T,b}= R^{T,c} \Pi_{G,b} R  \quad\mbox{ and }\quad 
EF^{T,c}= R^{T,b} \Pi_{G,c} R.
 \end{equation}
 In Section \ref{specialanalysis} we obtain condition number bounds for operator $FE^{T,b}$ in this example. This will allows us to write condition  number estimates for a non-symmetric method that uses local problems similar to the local problems of the OBDD in 
 \cite{kimn2007obdd}. See Section \ref{sec:restricted}.
\end{example}

\begin{example}\label{original}
Another particular case is when $c=b$ and $S=M^{T,b}R$ where $M:\widehat{G}\to \widehat{G}$ is a bounded operator. Then  
$F=S^{T,b}|_G = (R^{T,b}M)|_G$ and  $F^{T,b}=\Pi_{G,b} S=\Pi_{G,b} M^{T,b}R$. We can write 
 \begin{equation}\label{eq:originalras1}
FE^{T,b}=  R^{T,b}M \Pi_{G,b} R
 \end{equation}
and also 
 \begin{equation}\label{eq:originalras2}
EF^{T,b}=  R^{T,b}\Pi_{G,b} M^{T,b} R.
 \end{equation}
If $M(G)\subset G$ we are left with $F=EM$ and $FE^{T,b}=  E M E^{T,b}$. In addition if $M|_G:G\to G$ has a bounded inverse, 
$EF^{T,b}= F M^{-1} F^{T,b}.$\\
Therefore, the spectral properties of the resulting operators 
\eqref{eq:originalras1} and \eqref{eq:originalras2} will depend on the spectral properties of $M$. A case we explicitly mention is the case where $M$ is defined as point-wise multiplication operator in the context of Section \ref{sec:classical}.

Let us consider the example of Section \ref{sec:classical} and 
select a cut-off functions $\eta^{cut}=\{\eta^{cut}_\ell\}\in {\color{black}\widehat{G}}$. Define the operator 
$M: {\color{black}\widehat{G}}\to {\color{black}\widehat{G}}$ by 
\[
M\{v_\ell\} = \{ \eta_\ell^{cut} v_\ell\}.
\]
This operator is not symmetric with respect to the $b$ blinear form. In Section \ref{sec:classical}, $E$ is the extension by zero operator and then the extension operator $F=EM$ corresponds to extending by zero after a pointwise multiplication by $\eta_\ell^{cut}$ in each overlapping subdomain $\mathcal{O}_\ell$. According to \eqref{eq:originalras1} and
recalling the definition of 
$E^{T,b}$ in Section \ref{sec:classical}  we see that  $FE^{T,b}=EME^{T,b} =\sum_{\ell=1}^{N_S} E_\ell \eta_\ell^{cut} E^{T,b}_\ell$. It needs the solution of local problems, these local solution are then point-wise multiplied by the cut functions and after that an extension by zero and addition follows. This is exactly the RAS preconditioned operator as introduced by Cai and Sarkis in \cite{CaiSarkis}. On the other hand, according to \eqref{eq:originalras2}, 
$EF^{T,b}u=  R^{T,b}\Pi_{G,b} M^{T,b} Ru=  R^{T,b}\Pi_{G,b} M^{T,b} \{(u|_{\mathcal{O}_\ell})\}$. To compute $M^{T,b} \{(u|_{\mathcal{O}_\ell})\}$ we need to solve problem \eqref{reveq:strong:cutressolve} and then extend the local solutions by zero. See 
\eqref{reveq:strong:cutressolve}  and Remark \ref{rev:remarkclarify} later in Section \ref{sec:restricted}. The local problem \eqref{reveq:strong:cutressolve} is  the local 
problem used in the OBDD method in \cite{kimn2007obdd}.\\

A successful application of the  abstract analysis developed in this paper (see Section \ref{sec:abstract})  to the analysis of the operators in this example 
was not obtained here. This is a topic of ongoing research. The main issue is that we need results on the angle between subspaces 
$\mathcal{R}(F^{T,b})=M\mathcal{R}(E^{T,b})$ and 
$\mathcal{R}(E^{T,b})$ that  are not available (see Section \ref{sec:normproj}). In Section   \ref{sec:restricted} we are able to bound the condition number of a one level method that uses local problems similar to the local
problems of the OBDD. See Remark \ref{rev:remarkclarify}.
\end{example}
}

\begin{remark}[Perturbation theory]
Note {\color{black}that} we can write
$$
EF^{T,b} = EE^{T,b} + E(F-E)^{T,b}= EE^{T,b} +J
$$
where $J= E(F-E)^{T,b}$ is a perturbation of $EE^{T,b}$ of size 
$\|J\|_{\color{black}a,a}=\| E(F-E)^{T,b}\|_{\color{black}a,a}$. Several results can be {\color{black}pursued} of the type: If 
$\| J\|$ is {\color{black}small enough}, then the operator $EF^{T,b}$ will be invertible and it is possible to estimate its condition number. {\color{black}We think that this approach is not} practical for analyzing domain decomposition  methods. 
\end{remark}

\section{Condition number estimates using norms of projections}\label{sec:abstract}
In this section we present a different analysis that may turn useful when estimating condition number of preconditioned operators (not-necessarily constructed by a domain decomposition design).
We present a series of projection arguments to study nonsymmetric methods. 
As presented earlier, the idea is to estimate the condition number of an operator of the form
$EF^{T,b}$ where $E,F:G\to H$ are different extension operators. In particular we are able to bound the condition numbers for the family of 
nonsymmetric methods presented in Section \ref{sec:nonsymmetric} where the extension operators are defined from restriction operator from $H$ to a bigger space ${\color{black}\widehat{G}}\supset G$. Before going  
to nonsymetric methods we review norms of projections.

 \subsection{\color{black}Norms of projections}\label{sec:normproj}
We need the following definitions
and results; 
see \cite{Daniel, 2p, galantai2013projectors}. Let $X$ and $Y$ be subspaces of 
$G$ (or ${\color{black}\widehat{G}}$). Introduce the minimal angle between subspaces $X$ and $Y$ with respect to the inner product $b$, 
$\theta_b(X,Y)$ as 
\begin{equation}
\sin(\theta_b(X,Y)) = \inf_{\|x\|_{\color{black}b}=1, x\in X} \mbox{dist}_b(x,Y)=
\inf_{\|x\|_{b}=1, x\in X} \sqrt{1-\|\Pi_Yx\|^2_b}.
\end{equation}
Equivalently, we have $\sin(\theta_b(X,Y))^2= 1-\cos^2( \theta_b(X,Y))$ where
\begin{equation}
\cos( \theta_b(X,Y)) = \sup_{x\in X, y\in Y} \frac{b(x,y)}{\|x\|_{b}\|y\|_{b}}=
\|\Pi_X\Pi_Y\|_{\color{black}b,b}=\|
{\color{black}\Pi_Y\Pi_X}\|_{\color{black}b,b}.
\end{equation} 
Still equivalent, we have, 
\begin{equation} \label{angle}
\sin(\theta_b(X,Y)) = \frac{1}{\| Q(X,Y) \|_{\color{black}b,b}}
\end{equation}
where $Q(X,Y)$ is the (oblique) projection on $X$ along $Y$. {\color{black} Note that if $Y=X^{\perp,b}$ then 
$Q(X,Y)=\Pi_{X,b}$}. Introduce the maximal angle between subspaces $X$ and $Y$, 
$\Theta_b(X,Y)$ as 
\begin{eqnarray*}
\sin(\Theta_b(X,Y)) &=& \sup_{\|x\|_{a,b}=1, x\in X} \mbox{dist}_b(x,Y) \\&=&
\sup_{\|x\|_{b}=1, x\in X} \sqrt{1-\|\Pi_{Y,\color{black} b}x\|^2_b}=
\|\Pi_{X,\color{black} b}-\Pi_{Y,\color{black} b}\|_{\color{black}b,b}.
\end{eqnarray*}
Equivalently we have $\sin(\Theta_b(X,Y))^2= 1-\cos^2( \Theta_b(X,Y))$ where
\begin{equation}
\cos( \Theta_b(X,Y)) = \inf_{x\in X, y\in Y} \frac{b(x,y)}{\|x\|_{b}\|y\|_{b}}.
\end{equation} 
We also have, 
\[
 \theta_b(X,Y) +\Theta_b(X,Y^{\perp,b}) = \frac{\pi}{2},
\]
and 
\begin{equation}\label{Thetatheta}
\sin(\theta_b(X,Y)) = \cos(\Theta_b(X,Y^{\perp,b})).
\end{equation}

\subsection{General nonsymmetric method analysis using projections}

Let $F:G\to H$ be a second extension operator. We want to study the operator $FE^{T,b}$. See Figure \ref{fig2} for an illustration.

\begin{figure}
    \centering
    \includegraphics[width=0.85\textwidth]{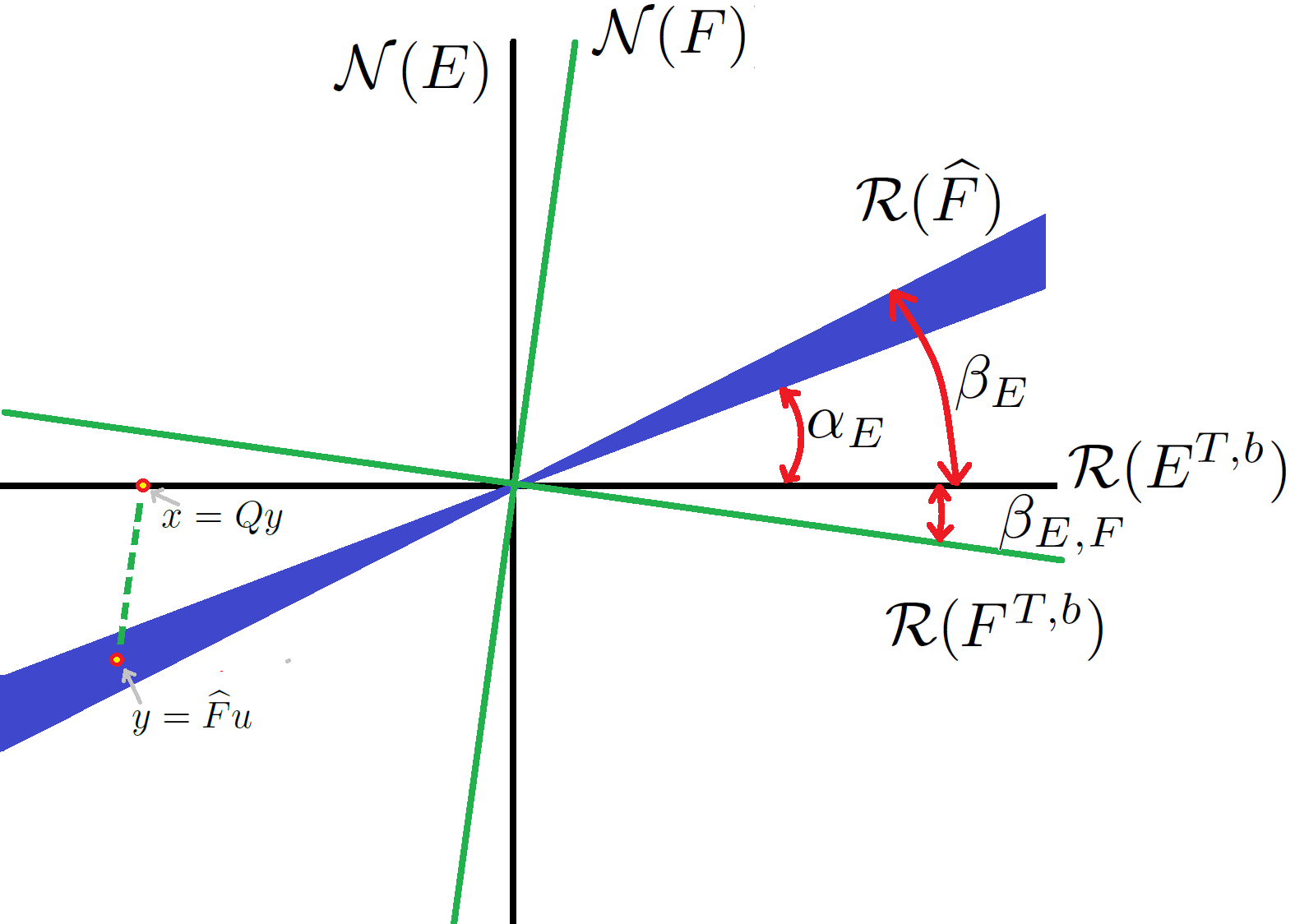}
    \caption{Illustration of subspaces of $G$. In order to illustrate angles we 
    picture $\color{black}\mathcal{R}(\widehat{F})$ as a cone. We also illustrate the procedure presented in the 
    proof of Theorem \ref{theorem:nonsymetricgeneral} 
    and the oblique projection $Q=Q(\mathcal{R}(E^{T,b}),\mathcal{N}(F))$.}\label{fig2}
\end{figure}
 
\begin{theorem}\label{theorem:nonsymetricgeneral} Consider extensions operators $E$ and $F$ with stable right 
inverse $\widehat{E}$ and $\widehat{F}$, respectively. Assume the boundedness of  $Q=Q(\mathcal{R}(E^{T,b}),\mathcal{N}(F))$,   the oblique projection onto  $\mathcal{R}(E^{T,b})$ {\color{black}along}
$\mathcal{N}(F)$. 
 Then, the operator $FE^{T,b}:H\to H$ is invertible. Moreover, 
$$\|(FE^{T,b})^{-1}\|_{\color{black}a,a}\leq  \|Q|_{\mathcal{R}(\widehat{E})}\|_{\color{black}b,b}
\|\widehat{E}\|_{\color{black}a,b}\|\widehat{F}\|_{\color{black}b,a}.$$

\end{theorem}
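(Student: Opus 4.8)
The plan is to factor $FE^{T,b}$ as a composition of two bijections and carry the norm bounds through. First I would exploit the stable right inverse $\widehat{E}$ of $E$: Lemma \ref{lions} gives the lower bound $\|\widehat{E}\|_{a,b}^{-1}\|v\|_a\le\|E^{T,b}v\|_b$ for all $v\in H$, so $E^{T,b}$ is bounded below, hence injective with closed range, and it is a bijection $H\to\mathcal{R}(E^{T,b})$ whose inverse satisfies $\|(E^{T,b})^{-1}\|_{b,a}\le\|\widehat{E}\|_{a,b}$. Next I would record what the hypothesis on $Q$ buys: a bounded idempotent on $G$ with range $\mathcal{R}(E^{T,b})$ and kernel $\mathcal{N}(F)$ forces the topological direct sum $G=\mathcal{R}(E^{T,b})\oplus\mathcal{N}(F)$, every $g\in G$ decomposing uniquely as $g=Qg+(I-Q)g$ with $Qg\in\mathcal{R}(E^{T,b})$ and $(I-Q)g\in\mathcal{N}(F)$. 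Since $F\widehat{F}=\mathrm{id}_H$, $F$ is onto, and combining this with the direct sum shows $F|_{\mathcal{R}(E^{T,b})}:\mathcal{R}(E^{T,b})\to H$ is a bijection: injective because a difference of two preimages in $\mathcal{R}(E^{T,b})$ lies in $\mathcal{R}(E^{T,b})\cap\mathcal{N}(F)=\{0\}$, onto because for $v\in H$ we have $v=F\widehat{F}v=F(Q\widehat{F}v)$ (as $(I-Q)\widehat{F}v\in\mathcal{N}(F)$) with $Q\widehat{F}v\in\mathcal{R}(E^{T,b})$. Hence $FE^{T,b}=(F|_{\mathcal{R}(E^{T,b})})\circ E^{T,b}$ is a bijection of $H$ with bounded inverse.

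For the quantitative bound I would fix $v\in H$, put $u=(FE^{T,b})^{-1}v$, and extract the key identity $E^{T,b}u=Q\widehat{F}v$: since $F(E^{T,b}u-\widehat{F}v)=v-v=0$ we have $E^{T,b}u-\widehat{F}v\in\mathcal{N}(F)=\mathcal{N}(Q)$, and applying $Q$, together with $Q|_{\mathcal{R}(E^{T,b})}=\mathrm{id}$, yields $E^{T,b}u=Q\widehat{F}v$. Now plug this into the lower bound of Lemma \ref{lions} and use that $\widehat{F}v$ belongs to the range of $\widehat{F}$ (the ``cone'' of Figure \ref{fig2}), so that only the restricted norm of $Q$ on that cone is needed:
\[
\|u\|_a\le\|\widehat{E}\|_{a,b}\,\|E^{T,b}u\|_b=\|\widehat{E}\|_{a,b}\,\|Q\widehat{F}v\|_b\le\|\widehat{E}\|_{a,b}\,\big\|Q|_{\mathcal{R}(\widehat{F})}\big\|_{b,b}\,\|\widehat{F}\|_{a,b}\,\|v\|_a .
\]
Taking the supremum over $\|v\|_a=1$ gives the stated estimate; geometrically this is exactly the procedure of Figure \ref{fig2}: lift $v$ to the cone by $\widehat{F}$, project obliquely onto $\mathcal{R}(E^{T,b})$ by $Q$, then undo $E^{T,b}$.

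The step I expect to be the main obstacle is the bookkeeping around the direct-sum/closedness issue: one has to be sure that $Q=Q(\mathcal{R}(E^{T,b}),\mathcal{N}(F))$ is genuinely well defined, i.e. that the existence of a bounded idempotent with the prescribed range and kernel is equivalent to $\mathcal{R}(E^{T,b})$ and $\mathcal{N}(F)$ being closed and complementary in $G$, and to reconcile this with the closedness of $\mathcal{R}(E^{T,b})$ coming from the Lemma \ref{lions} lower bound on $E^{T,b}$. A secondary subtlety is to make sure the estimate only ever applies $Q$ to elements of $\mathcal{R}(\widehat{F})$, so that the sharper restricted norm $\|Q|_{\mathcal{R}(\widehat{F})}\|_{b,b}$ — rather than a possibly much larger $\|Q\|_{b,b}$ — controls the inverse; this is why the identity $E^{T,b}u=Q\widehat{F}v$, and not a bound through some generic preimage of $v$, is the right one to isolate. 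Everything else is routine manipulation of subordinate operator norms and the defining property $F\widehat{F}=\mathrm{id}_H$.
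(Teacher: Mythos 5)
Your proof is correct and follows essentially the same route as the paper's: lift $v$ into $G$ by $\widehat{F}$, project obliquely onto $\mathcal{R}(E^{T,b})$ along $\mathcal{N}(F)$ by $Q$, and undo $E^{T,b}$ using the lower bound coming from $\widehat{E}$ (the paper writes this last step as $w=\widehat{E}^{T,b}x$, which gives the same constant $\|\widehat{E}\|_{a,b}$). You are in fact somewhat more careful than the paper on invertibility: the paper's proof only constructs a bounded right inverse, whereas your direct-sum argument $G=\mathcal{R}(E^{T,b})\oplus\mathcal{N}(F)$ together with the injectivity of $E^{T,b}$ also settles uniqueness.

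One point worth flagging: your argument produces the factor $\|Q|_{\mathcal{R}(\widehat{F})}\|_{b,b}$, while the theorem states $\|Q|_{\mathcal{R}(\widehat{E})}\|_{b,b}$. This is not a gap in your proof but an inconsistency in the paper itself: in its step 2 the projection $Q$ is applied to $y=\widehat{F}u\in\mathcal{R}(\widehat{F})$, yet the restricted norm invoked in \eqref{op} is taken over $\mathcal{R}(\widehat{E})$, which is only legitimate if $\mathcal{R}(\widehat{F})\subseteq\mathcal{R}(\widehat{E})$. Your version with $\mathcal{R}(\widehat{F})$ is the bound the argument actually supports; the discrepancy is harmless in the paper's later applications, where $\widehat{E}$ and $\widehat{F}$ are chosen with equal ranges.
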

\begin{proof}
We solve the equation
$$
FE^{T,b} w=u.
$$

Let $u\in H$ be given.  
\begin{enumerate}
\item Define $y=\widehat{F}u\in G$. Then we readily see that $Fy=u$.  By assumption 
we then have 
\begin{equation}\label{normofy2}
\|y\|_{b}\leq \|\widehat{F}\|_{\color{black}a,b} \|u\|_a.
\end{equation}
\item Construct 
$x\in \mathcal{R}(E^{T,b})$ such that $Fx=Fy=u$.   Here we use the oblique projection 
$Q=Q(\mathcal{R}(E^{T,b}),\mathcal{N}(F))$. 
See Figure \ref{fig2}. In fact, $x=Qy$. By definition of the projection $Q$ we have 
$F(y-x)=F(y-Qy)=0$ so that $Fx=Fy$. We have, 
\begin{equation}\label{op}
\|x\|_{b}\leq \|Q|_{\mathcal{R}(\widehat{E})}\|_{\color{black}b,b} \|y\|_{b}.
\end{equation}

\item Take $w\in H$ such that 
$E^{T,b}w=x$. In fact, $w=\widehat{E}^{T,b} x$. This $w$ is the solution of the equation above since we have
$EE^{T,b}w=Fx=Fy=u$. We  can bound 
\begin{equation}\label{normofw2}
\|w\|_{a}\leq \|\widehat{E}\|_{a,b}\| x\|_{b}.
\end{equation}
\end{enumerate}
By combining the estimates in (\ref{normofy2}), (\ref{op}) and (\ref{normofw2}) above we finish the proof.
\end{proof}

We can now give a bound for the condition number of the operator $\color{black}
FE^{T,b}$.
\begin{corollary} We have
$$\kappa(FE^{T,b}) =\|FE^{T,b}\|_{\color{black}a,a}\|(FE^{T,b})^{-1}\|_{\color{black}a,a}
\leq   \|\widehat{F}\|_{\color{black}a,b}\|F\|_{b,a}\|Q|_{\mathcal{R}(\widehat{E})}\|_{\color{black}b,b} \|E\|_{b,a}\|\widehat{E}\|_{\color{black}a,b}.$$
\end{corollary}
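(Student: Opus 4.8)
The plan is to combine the bound on the inverse already proved in Theorem \ref{theorem:nonsymetricgeneral} with an elementary bound on the forward operator norm $\|FE^{T,b}\|_{a,a}$, and then multiply. First note that $\kappa(FE^{T,b})$ is well defined: invertibility of $FE^{T,b}$ is exactly the content of Theorem \ref{theorem:nonsymetricgeneral} (its hypotheses---stable right inverses $\widehat{E}$, $\widehat{F}$ and boundedness of the oblique projection $Q=Q(\mathcal{R}(E^{T,b}),\mathcal{N}(F))$---are the standing assumptions here), and $\|FE^{T,b}\|_{a,a}$ is finite since $E$ and $F$ are bounded.

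For the forward bound I would factor $FE^{T,b}=F\circ E^{T,b}$ and use submultiplicativity of subordinate norms: for $v\in H$,
\[
\|FE^{T,b}v\|_{a}\le \|F\|_{b,a}\,\|E^{T,b}v\|_{b}.
\]
To control $\|E^{T,b}v\|_b$ I would invoke Lemma \ref{lions}, whose chain of inequalities gives $\|E^{T,b}v\|_{b}^{2}=a(v,EE^{T,b}v)\le \|E\|_{b,a}^{2}\|v\|_{a}^{2}$; equivalently, since $E^{T,b}$ is the adjoint of $E$ with respect to the inner products $b$ and $a$, one has $\|E^{T,b}\|_{a,b}=\|E\|_{b,a}$. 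Either way this yields
\[
\|FE^{T,b}\|_{a,a}\le \|F\|_{b,a}\,\|E\|_{b,a}.
\]

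It remains to multiply this by the inverse estimate from Theorem \ref{theorem:nonsymetricgeneral},
\[
\|(FE^{T,b})^{-1}\|_{a,a}\le \|Q|_{\mathcal{R}(\widehat{E})}\|_{b,b}\,\|\widehat{E}\|_{a,b}\,\|\widehat{F}\|_{b,a},
\]
and reorder the five scalar factors into the arrangement displayed in the statement. No genuine obstacle arises; the only small points to get right are the identification $\|E^{T,b}\|_{a,b}=\|E\|_{b,a}$ (the standard fact that an operator and its adjoint for the given inner products have equal norm) and citing Theorem \ref{theorem:nonsymetricgeneral} both for invertibility and for the inverse bound rather than re-deriving either. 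Everything else is one line of submultiplicativity.
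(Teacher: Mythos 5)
Your proposal is correct and is essentially the argument the paper intends: the corollary follows immediately from the inverse bound of Theorem \ref{theorem:nonsymetricgeneral} together with the submultiplicative estimate $\|FE^{T,b}\|_{a,a}\le\|F\|_{b,a}\|E^{T,b}\|_{a,b}=\|F\|_{b,a}\|E\|_{b,a}$, the latter being exactly the upper bound already established in Lemma \ref{lions}. The only discrepancy is the paper's own inconsistent subscripting of $\|\widehat{F}\|$ between the theorem and the corollary, which you inherit harmlessly.
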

We also have the following corollary.
\begin{corollary} If 
$\mathcal{N}(F)$ is orthogonal to $\mathcal{R}(E^{T,b})$ (or $\mathcal{N}(F)\subset
\mathcal{N}(E)$) then 
$$\kappa(FE^{T,b}) =
\|\Pi_{\mathcal{R}(E^{T,b}), \color{black}b}
|_{\mathcal{R}(\widehat{E})}
\|_{\color{black}b,b}\|FE^{T,b}\|_{\color{black}a,a}\|(FE^{T,b})^{-1}\|_{\color{black}a,a}
\leq   \|F\|_{\color{black}b,a}\|E\|_{\color{black}b,a}\|\widehat{F}\|_{\color{black}a,b}\|\widehat{E}\|_{\color{black}a,b}.$$
\end{corollary}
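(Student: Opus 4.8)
The plan is to specialize Theorem~\ref{theorem:nonsymetricgeneral} and the corollary preceding the present statement, the essential observation being that the extra hypothesis $\mathcal{N}(F)\perp_{b}\mathcal{R}(E^{T,b})$ collapses the oblique projection $Q=Q(\mathcal{R}(E^{T,b}),\mathcal{N}(F))$ to the $b$-orthogonal projection $\Pi_{\mathcal{R}(E^{T,b}),b}$. First I would record that $E^{T,b}$ is bounded below: the chain of inequalities in the proof of Lemma~\ref{lions} already gives $\|E^{T,b}v\|_{b}\ge\|\widehat{E}\|_{a,b}^{-1}\|v\|_{a}$ for all $v\in H$, so $\mathcal{R}(E^{T,b})$ is closed and the adjoint identity $\mathcal{R}(E^{T,b})=\mathcal{N}(E)^{\perp,b}$ holds. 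Consequently the two phrasings of the hypothesis are equivalent: $\mathcal{N}(F)\subset\mathcal{N}(E)$ if and only if $\mathcal{N}(F)\perp_{b}\mathcal{R}(E^{T,b})$.

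Next I would use that the standing assumption of Theorem~\ref{theorem:nonsymetricgeneral}, namely that $Q$ is a bounded projection defined on all of $G$, amounts to the direct-sum decomposition $G=\mathcal{R}(E^{T,b})\oplus\mathcal{N}(F)$; together with orthogonality this forces $\mathcal{N}(F)=\mathcal{R}(E^{T,b})^{\perp,b}$, and hence $Q=\Pi_{\mathcal{R}(E^{T,b}),b}$, exactly as noted after \eqref{angle}. Since a nonzero orthogonal projection in a Hilbert space has operator norm one, its restriction to the subspace $\mathcal{R}(\widehat{E})$ satisfies $\|\Pi_{\mathcal{R}(E^{T,b}),b}|_{\mathcal{R}(\widehat{E})}\|_{b,b}\le 1$. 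Substituting $Q=\Pi_{\mathcal{R}(E^{T,b}),b}$ into the estimate $\kappa(FE^{T,b})\le\|\widehat{F}\|_{a,b}\|F\|_{b,a}\|Q|_{\mathcal{R}(\widehat{E})}\|_{b,b}\|E\|_{b,a}\|\widehat{E}\|_{a,b}$ of the preceding corollary and discarding the factor bounded by $1$ yields the asserted bound $\kappa(FE^{T,b})\le\|F\|_{b,a}\|E\|_{b,a}\|\widehat{F}\|_{a,b}\|\widehat{E}\|_{a,b}$; the middle expression in the statement just records the same chain with the projection factor (now equal to that of $\Pi_{\mathcal{R}(E^{T,b}),b}$) still written out.

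I expect the only point requiring care to be the identification $Q=\Pi_{\mathcal{R}(E^{T,b}),b}$: one must check both that $\mathcal{R}(E^{T,b})$ is closed, so that it genuinely is the $b$-orthogonal complement of $\mathcal{N}(E)$, and that $\mathcal{N}(F)$ fills out that whole complement rather than being a proper subspace of it, which is exactly what the standing boundedness hypothesis on $Q$ provides. In the finite-dimensional domain-decomposition setting both facts are automatic, and the argument then reduces to the observation that orthogonality turns $Q$ into an orthogonal projection, hence of norm one, so the projection factor simply drops out of the bound of the previous corollary; everything after that identification is a direct substitution.
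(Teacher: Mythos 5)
Your proposal is correct and matches the paper's (implicit) argument: the corollary is intended as a direct specialization of the preceding one, with the orthogonality hypothesis turning the oblique projection $Q(\mathcal{R}(E^{T,b}),\mathcal{N}(F))$ into the $b$-orthogonal projection $\Pi_{\mathcal{R}(E^{T,b}),b}$, whose (restricted) norm is at most one, so that factor drops from the bound. Your added care about closedness of $\mathcal{R}(E^{T,b})$ and the equivalence of the two phrasings of the hypothesis is a useful tightening of what the paper leaves unstated, but it is not a different route.
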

Finally, our result generalizes the analysis of the symmetric method in the sense that we have the following corollary. 
\begin{corollary} If $F=E$ we have, 
$$\kappa(EE^{T,b}) =\|EF^{T,b}\|_{\color{black}a,a}\|(EF^{T,b})^{-1}\|_{\color{black}a,a}
\leq  \cos(\alpha_E) \|E\|^2_{\color{black}b,a}\|\widehat{E}\|^2_{ a,b} \leq \|E\|^2_
{\color{black}b,a}\|\widehat{E}\|^2_{\color{black}a,b}$$
where $ \alpha_E$ is the minimal angle between subspaces 
$\mathcal{R}(\widehat{E})$ and $\mathcal{R}(E^{T,b})$, that is, 
\begin{equation}\label{reveq:dev:alphaE}
\alpha_E=\theta_b\left(\mathcal{R}(\widehat{E}),\mathcal{R}(E^{T,b})\right).
\end{equation}
\end{corollary}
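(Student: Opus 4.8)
The plan is to obtain the claim by \emph{specializing the general condition-number bound} that follows Theorem~\ref{theorem:nonsymetricgeneral} to the case $F=E$, and then identifying the restricted-projection norm appearing there with $\cos(\alpha_E)$. Taking $\widehat{F}=\widehat{E}$, that corollary reads
\[
\kappa(EE^{T,b})\le \|\widehat{E}\|_{a,b}\,\|E\|_{b,a}\,\|Q|_{\mathcal{R}(\widehat{E})}\|_{b,b}\,\|E\|_{b,a}\,\|\widehat{E}\|_{a,b},
\]
with $Q=Q(\mathcal{R}(E^{T,b}),\mathcal{N}(E))$. Hence the whole argument reduces to two things: checking that the boundedness hypothesis on $Q$ in Theorem~\ref{theorem:nonsymetricgeneral} is automatic when $F=E$, and computing $\|Q|_{\mathcal{R}(\widehat{E})}\|_{b,b}$.

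The first step is to observe that for $F=E$ the oblique projection $Q$ collapses to the \emph{orthogonal} projection $\Pi_{\mathcal{R}(E^{T,b}),b}$. Since $E$ has a bounded right inverse $\widehat{E}$, transposing $E\widehat{E}=I_H$ gives $\widehat{E}^{T,b}E^{T,b}=I_H$ (the identity already used for \eqref{normofw2}), so $E^{T,b}$ is bounded below and $\mathcal{R}(E^{T,b})$ is closed; and for a bounded operator between Hilbert spaces one always has $\mathcal{R}(E^{T,b})^{\perp,b}=\mathcal{N}(E)$ in $G$. Thus $\mathcal{N}(E)$ is exactly the $b$-orthogonal complement of $\mathcal{R}(E^{T,b})$, and by the remark in Section~\ref{sec:normproj} that $Q(X,Y)=\Pi_{X,b}$ when $Y=X^{\perp,b}$, we get $Q=\Pi_{\mathcal{R}(E^{T,b}),b}$, which is a contraction; in particular the hypothesis of Theorem~\ref{theorem:nonsymetricgeneral} holds with nothing extra to assume.

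Next I would evaluate the restricted norm. Using $\|\Pi_X|_Y\|_{b,b}=\|\Pi_X\Pi_Y\|_{b,b}$ (replacing the restriction to $Y$ by composition with $\Pi_Y$, which is a contraction fixing $Y$), together with $\|\Pi_X\Pi_Y\|_{b,b}=\|\Pi_Y\Pi_X\|_{b,b}=\cos(\theta_b(X,Y))$ from Section~\ref{sec:normproj}, applied with $X=\mathcal{R}(E^{T,b})$ and $Y=\mathcal{R}(\widehat{E})$, one obtains
\[
\|Q|_{\mathcal{R}(\widehat{E})}\|_{b,b}=\cos\big(\theta_b(\mathcal{R}(\widehat{E}),\mathcal{R}(E^{T,b}))\big)=\cos(\alpha_E)
\]
by \eqref{reveq:dev:alphaE}. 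Substituting into the displayed bound yields $\kappa(EE^{T,b})\le\cos(\alpha_E)\,\|E\|_{b,a}^2\,\|\widehat{E}\|_{a,b}^2$, and since $\cos(\alpha_E)\le1$ this is in turn $\le \|E\|_{b,a}^2\,\|\widehat{E}\|_{a,b}^2$, the estimate already furnished by Lemma~\ref{lions}. This is precisely the sense in which the nonsymmetric analysis contains, and sharpens, the symmetric one.

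The step I expect to be the main obstacle is the second one: establishing cleanly that $\mathcal{N}(E)=\mathcal{R}(E^{T,b})^{\perp,b}$ with $\mathcal{R}(E^{T,b})$ closed — this is where the existence of the bounded right inverse is genuinely used — so that $Q$ really is the orthogonal projection and $Q|_{\mathcal{R}(\widehat{E})}$ is the restriction of that orthogonal projection to $\mathcal{R}(\widehat{E})$. Once this identification is in place, the remainder is routine bookkeeping with operator norms of transposes ($\|E^{T,b}\|_{a,b}=\|E\|_{b,a}$, $\|\widehat{E}^{T,b}\|_{b,a}=\|\widehat{E}\|_{a,b}$) and the angle formulas of Section~\ref{sec:normproj}.
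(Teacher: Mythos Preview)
Your proposal is correct and follows essentially the same approach as the paper: both identify $Q(\mathcal{R}(E^{T,b}),\mathcal{N}(E))$ with the orthogonal projection $\Pi_{\mathcal{R}(E^{T,b}),b}$ and then evaluate $\|\Pi_{\mathcal{R}(E^{T,b}),b}|_{\mathcal{R}(\widehat{E})}\|_{b,b}=\cos(\alpha_E)$. The only cosmetic difference is that you invoke $\cos(\theta_b(X,Y))=\|\Pi_X\Pi_Y\|_{b,b}$ directly, whereas the paper passes through $\sin(\Theta_b(\mathcal{R}(\widehat{E}),\mathcal{N}(E)))$ and the complementary-angle identity \eqref{Thetatheta}; your argument is also more explicit about why $\mathcal{R}(E^{T,b})$ is closed and $\mathcal{N}(E)$ is its $b$-orthogonal complement.
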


\begin{proof} Denote by $\Pi_{\mathcal{R}(E^T)}|_{\mathcal{R}(\widehat{E})}$ the restriction of $\Pi_{\mathcal{R}(E^{T,b})}$ to $\mathcal{R}(\widehat{E})$. 
Observe that (see \cite{Daniel, 2p, galantai2013projectors})
\begin{eqnarray}
\|\Pi_{\mathcal{R}(E^{T,b})}|_{\mathcal{R}(\widehat{E})}\|_{\color{black}b,b} &=&
\sup_{\|x\|_{b}=1, x\in \mathcal{R}(\widehat{E})} \|\Pi_{\mathcal{R}(E^{T,b}), \color{black}b}x\|_{b}\\
&=&
\sup_{\|x\|_{b}=1, x\in \mathcal{R}(\widehat{E})} \sqrt{ 1 -
\|\Pi_{\mathcal{N}(E),\color{black}b}x\|^2_b}\\
&=&\sin \left(\Theta_b\left(\mathcal{R}(\widehat{E}),\mathcal{N}(E)\right)\right)\\
&=&\cos \left(\theta_b\left(\mathcal{R}(\widehat{E}),\mathcal{R}(E^{T,b})\right)\right).
\end{eqnarray}
See Figure \ref{fig1} for an illustration of this case.
See \cite{Daniel, 2p, galantai2013projectors} for more details and related results on oblique projections.

\end{proof}

\begin{figure}    \centering
    \includegraphics[width=0.85\textwidth]{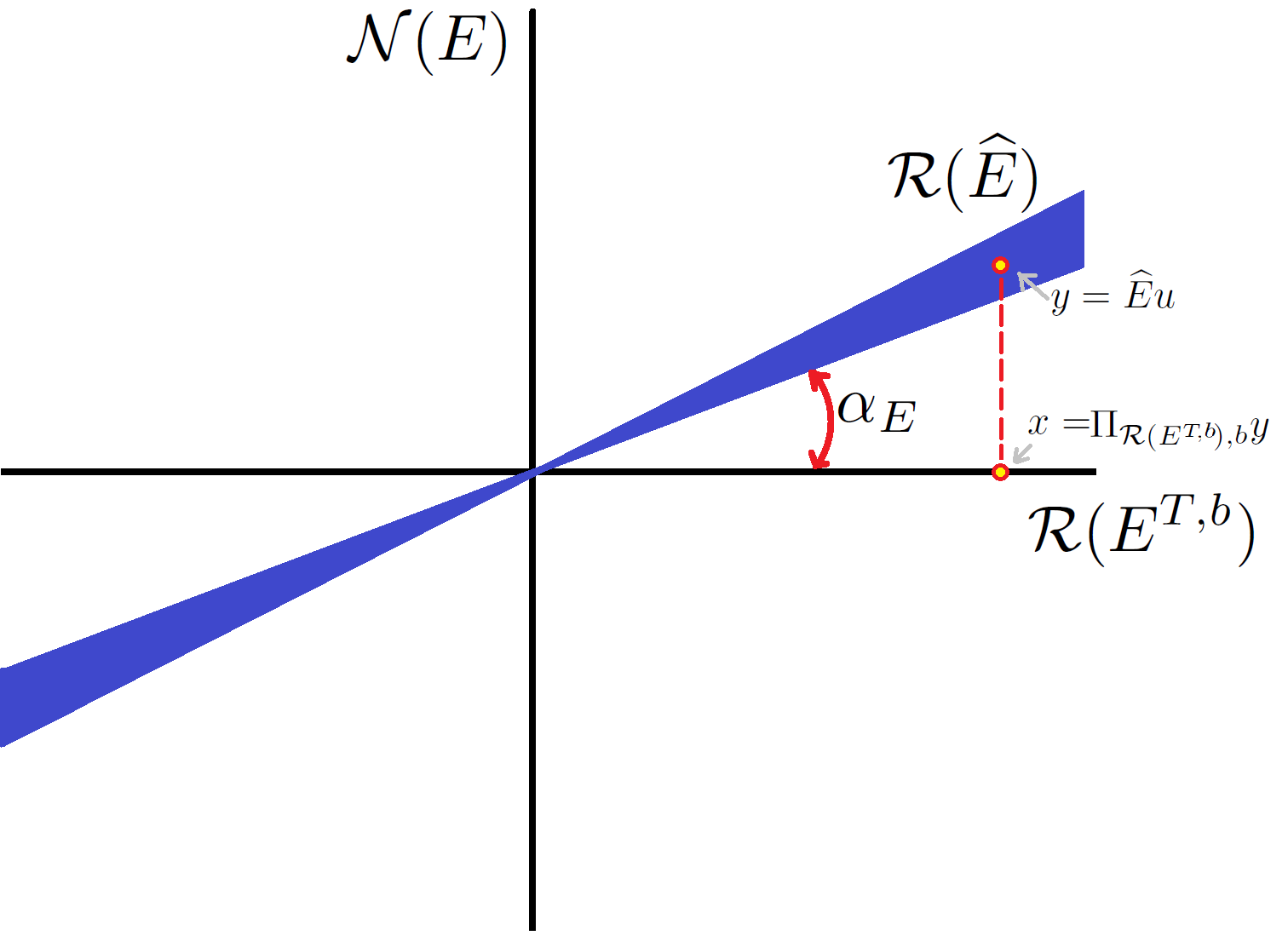}
    \caption{Illustration of subspaces of $G$. In order to illustrate angles we 
    picture $\mathcal{R}(\widehat{E})$ as a cone. We also illustrate the projection 
    $x=\Pi_{\mathcal{R}(E^{T,b}),b}y$.}\label{fig1}
\end{figure}

There is another interesting observation that is useful for the analysis and it is worth {\color{black}saying} as a result before we move on. 

\begin{lemma}
The operator $Q_E=\widehat{E} E$ is a projection on $\mathcal{R}(\widehat{E})$  along $\mathcal{N}(E)$.  Analogously, the operator 
$Q_E^{T,b}=E^{T,b}\widehat{E}^{T,b}$ is a projection on $\mathcal{R}(E^{T,b})$ and along
$\mathcal{N}(\widehat{E}^c)$. 
\end{lemma}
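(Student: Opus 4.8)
The plan is to derive both assertions from the single defining property of a bounded right inverse, namely $E\widehat{E}=\mathrm{Id}_H$, together with its $b$-transpose, and then simply read off the range and kernel. Recall $E:G\to H$ and that $\widehat{E}:H\to G$ is bounded with $E\widehat{E}v=v$ for all $v\in H$; in particular $\widehat{E}$ is injective and $E$ is surjective. Taking the $b$-transpose of $E\widehat{E}=\mathrm{Id}_H$ — using $(AB)^{T,b}=B^{T,b}A^{T,b}$ and $(\mathrm{Id}_H)^{T,b}=\mathrm{Id}_H$ — yields $\widehat{E}^{T,b}E^{T,b}=\mathrm{Id}_H$, so that $E^{T,b}$ is injective and $\widehat{E}^{T,b}$ is surjective. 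These two identities are the only facts needed.

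First I would treat $Q_E=\widehat{E}E$. Idempotency is immediate, $Q_E^2=\widehat{E}(E\widehat{E})E=\widehat{E}E=Q_E$, so $Q_E$ is a bounded projection and hence $G=\mathcal{R}(Q_E)\oplus\mathcal{N}(Q_E)$ with $\mathcal{R}(Q_E)$ closed. For the range, $\mathcal{R}(Q_E)\subset\mathcal{R}(\widehat{E})$ is clear, while conversely any $w=\widehat{E}v$ satisfies $Q_Ew=\widehat{E}(E\widehat{E})v=\widehat{E}v=w$, so $\mathcal{R}(Q_E)=\mathcal{R}(\widehat{E})$. For the kernel, $Q_Ex=0$ means $\widehat{E}(Ex)=0$, which by injectivity of $\widehat{E}$ is equivalent to $Ex=0$, hence $\mathcal{N}(Q_E)=\mathcal{N}(E)$. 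Thus $Q_E$ is the projection onto $\mathcal{R}(\widehat{E})$ along $\mathcal{N}(E)$.

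The second assertion is the mirror image under transposition. Since $(\widehat{E}E)^{T,b}=E^{T,b}\widehat{E}^{T,b}$, the operator $Q_E^{T,b}=E^{T,b}\widehat{E}^{T,b}$ is the $b$-transpose of $Q_E$, and now the identity $\widehat{E}^{T,b}E^{T,b}=\mathrm{Id}_H$ plays the role previously played by $E\widehat{E}=\mathrm{Id}_H$: idempotency follows from $(Q_E^{T,b})^2=E^{T,b}(\widehat{E}^{T,b}E^{T,b})\widehat{E}^{T,b}=Q_E^{T,b}$; the range equals $\mathcal{R}(E^{T,b})$ because any $z=E^{T,b}v$ is fixed, $Q_E^{T,b}z=E^{T,b}(\widehat{E}^{T,b}E^{T,b})v=z$; and the kernel equals $\mathcal{N}(\widehat{E}^{T,b})$ because $E^{T,b}$ is injective. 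Hence $Q_E^{T,b}$ is the projection onto $\mathcal{R}(E^{T,b})$ along $\mathcal{N}(\widehat{E}^{T,b})$, i.e.\ the space denoted $\mathcal{N}(\widehat{E}^c)$ in the statement. I do not expect a genuine obstacle here; the only points requiring care are the bookkeeping of which inner product each transpose is taken with respect to, the (automatic) closedness of the ranges of the bounded idempotents $Q_E$ and $Q_E^{T,b}$, and fixing the interpretation of the symbol $\widehat{E}^c$ as the $b$-transpose $\widehat{E}^{T,b}$.
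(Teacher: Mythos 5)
Your proof is correct. The paper actually states this lemma without proof, and your argument supplies exactly the standard verification the author leaves implicit: idempotency of $\widehat{E}E$ and $E^{T,b}\widehat{E}^{T,b}$ follows from $E\widehat{E}=\mathrm{Id}_H$ and its $b$-transpose $\widehat{E}^{T,b}E^{T,b}=\mathrm{Id}_H$, after which the range and kernel identifications are immediate from the injectivity of $\widehat{E}$ and of $E^{T,b}$. Your reading of the (apparently typographical) symbol $\mathcal{N}(\widehat{E}^{c})$ as $\mathcal{N}(\widehat{E}^{T,b})$ is the right one and is consistent with the paper's use of $\widehat{E}^{T,b}$ in the proof of Theorem \ref{theorem:nonsymetricgeneral}.
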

Using this lemma we can study the relative position of subspaces of interest. For instance we have,
\begin{equation}\label{angleEtilde}
\cos(\Theta_b(\mathcal{R}(\widehat{E}) , \mathcal{R}(E^{T,b}))=\sin(\theta_b(\mathcal{R}(\widehat{E}) , \mathcal{N}(E)) = \frac{1}{\| Q_E \|_{\color{black}a,a}} \geq 
\frac{1}{\|\widehat{E}\|_{\color{black}a,b} \|E\|_{\color{black}b,a}}.
\end{equation}
See Figure \ref{fig1}.
\begin{remark}\color{black}
Note that 
$\Pi_{\mathcal{R}(E^{T,b}),\color{black} b}|_{\mathcal{R}(\widehat{E})}$ and 
$Q_E|_{\mathcal{R}(E^{T,b})}$ are inverse to each other. 
That is 
$\Pi_{\mathcal{R}(E^{T,b}),\color{black}b}
Q_E x=x$ for all 
$x\in \mathcal{R}(E^{T,b})$ and 
$Q_E\Pi_{\mathcal{R}(E^{T,b}),\color{black}b}
 y=y$ for all 
$y\in \mathcal{R}(\widehat{E})$.
\end{remark}

{\color{black}We now come back to the general  case where $F\not=E$}. In practice we have to estimate the norms $ \|F \|_{\color{black}b,a}, \|E\|_{\color{black}b,a},\|\widehat{E}\|_{\color{black}a,b}$, 
$\|Q\|_{\color{black}b,b}$ and  $\|\widehat{F}\|_{\color{black}a,b}$. The norm  $\|\widehat{E}\|_{\color{black}b,a}$ it is usually required in 
symmetric methods. The norm $\|\widehat{F}\|_{\color{black}a,b}$ corresponds to the new extension operator
 used to obtain the nonsymmetric method.  
The norm    $\|Q|_{\mathcal{R}(\widehat{E})}\|_{\color{black}b,b}$ corresponds to a compatibility 
 of them both extension operators.
 
   {\color{black}There are  ways to try} to estimate $\|Q|_{\mathcal{R}(\widehat{E})}\|$ that may lead to 
 different analysis for nonsymmetric methods. See \cite{Daniel, 2p, galantai2013projectors}.
  In case it is technically difficult to get a bound for $\|Q|_{\mathcal{R}(\widehat{E})}\|$, we can use the fact that 
$$
Q y = Q \Pi_{\mathcal{R}(F^T),\color{black} b} y
$$
for all $y\in {\color{black}\widehat{G}}$ and therefore we can use the bound
 \begin{equation}
 \|Q|_{\mathcal{R}(\widehat{F})}\|_{\color{black}b,b}\leq 
 \|Q|_{\mathcal{R}(F^T)} \|_{\color{black}b,b} 
 \|\Pi_{\mathcal{R}(F^T),\color{black}b}|_{{\mathcal{R}(\widehat{F})}} \|_{\color{black}b,b}
\leq  \|Q\|_{\color{black}b,b} \cos(\alpha_F)=
 \frac{\cos(\alpha_F)}{\cos(\beta_{E,F})}
 \end{equation}
where $\beta_{E,F}$ is the maximal angle between subspaces 
$\mathcal{R}(F^{T,b})$ and $\mathcal{R}(E^{T,b})$, that is, 
\begin{equation}\label{eq:def:betaf}
\beta_{E,F}=\Theta_b\left( \mathcal{R}(F^{T,b}), \mathcal{R}(E^{T,b})\right)
=\Theta_b\left( \mathcal{N}(F), \mathcal{N}(E)\right).
\end{equation}
See Figure \ref{fig2}. Here we used  
 (\ref{angle}) to obtain,
\[
\|Q\|^{-1}_{\color{black}b,b} = 
\sin\left(
\theta_b\left(\mathcal{R}(F^{T,b}),\mathcal{N}(E)\right)
\right)=
  \cos\left(
\Theta_b\left(\mathcal{R}(F^{T,b}),\mathcal{R}(E^{T,b})\right)
\right)=\cos(\beta_{E,F}).
\]
In this case we have the following result. See Figure \ref{fig2} for an illustration.
\begin{corollary}\label{corollary:betaf} Under the assumptions of Theorem \ref{theorem:nonsymetricgeneral} we have
\begin{equation}\label{betaEF}
\kappa(FE^{T,b}) =\|FE^{T,b}\|_{\color{black}a,a}\|(FE^{T,b})^{-1}\|_{a,a}
\leq   \|\widehat{F}\|_{\color{black}a,b}\|F\|_{\color{black}b,a}
\frac{\cos(\alpha_E)}{\cos(\beta_{E,F})} \|E\|_{\color{black}b,a}\|\widehat{E}\|_{\color{black}a,b}
\end{equation}
where $\alpha_E$ is defined in \eqref{reveq:dev:alphaE} and $\beta_{E,F}$ is defined in \eqref{eq:def:betaf}.
\end{corollary}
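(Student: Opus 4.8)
The plan is to combine Theorem~\ref{theorem:nonsymetricgeneral} with the intermediate bound on $\|Q|_{\mathcal{R}(\widehat{E})}\|_{b,b}$ that was established in the paragraph preceding the corollary. First I would recall that Theorem~\ref{theorem:nonsymetricgeneral} already gives
\[
\|(FE^{T,b})^{-1}\|_{a,a}\leq \|Q|_{\mathcal{R}(\widehat{E})}\|_{b,b}\,\|\widehat{E}\|_{a,b}\,\|\widehat{F}\|_{b,a},
\]
so the only thing left to do is to replace $\|Q|_{\mathcal{R}(\widehat{E})}\|_{b,b}$ by the geometric quantity $\cos(\alpha_E)/\cos(\beta_{E,F})$. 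This is exactly the chain of inequalities displayed just above the corollary statement: using $Qy = Q\,\Pi_{\mathcal{R}(E^{T,b}),b}\,y$ for all $y\in\widehat{G}$, submultiplicativity of the operator norm gives
\[
\|Q|_{\mathcal{R}(\widehat{E})}\|_{b,b}\leq \|Q\|_{b,b}\,\|\Pi_{\mathcal{R}(E^{T,b}),b}|_{\mathcal{R}(\widehat{E})}\|_{b,b},
\]
and then one identifies the first factor with $1/\cos(\beta_{E,F})$ via \eqref{angle} and the definition of $\beta_{E,F}$ in \eqref{eq:def:betaf}, and the second factor with $\cos(\alpha_E)$ via the computation in the proof of the $F=E$ corollary (the identity $\|\Pi_{\mathcal{R}(E^{T,b}),b}|_{\mathcal{R}(\widehat{E})}\|_{b,b}=\cos(\theta_b(\mathcal{R}(\widehat{E}),\mathcal{R}(E^{T,b})))=\cos(\alpha_E)$).

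Next I would handle the operator norm $\|FE^{T,b}\|_{a,a}$. Here I would simply write $FE^{T,b}=F\circ E^{T,b}$, use submultiplicativity to get $\|FE^{T,b}\|_{a,a}\leq \|F\|_{b,a}\,\|E^{T,b}\|_{a,b}$, and then note that $\|E^{T,b}\|_{a,b}=\|E\|_{b,a}$ since transposition with respect to the inner products $a$ and $b$ preserves the operator norm (this is the same fact used implicitly in Lemma~\ref{lions} and its corollaries). Multiplying the two bounds together yields
\[
\kappa(FE^{T,b})=\|FE^{T,b}\|_{a,a}\,\|(FE^{T,b})^{-1}\|_{a,a}\leq \|F\|_{b,a}\|E\|_{b,a}\cdot \|\widehat{F}\|_{a,b}\,\frac{\cos(\alpha_E)}{\cos(\beta_{E,F})}\,\|\widehat{E}\|_{a,b},
\]
which is \eqref{betaEF} after reordering factors.

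The invertibility of $FE^{T,b}$, needed for $\kappa$ to make sense, is inherited directly from Theorem~\ref{theorem:nonsymetricgeneral} under its standing hypotheses (existence of stable right inverses $\widehat{E},\widehat{F}$ and boundedness of $Q$), which are precisely the assumptions being carried over in the corollary's phrasing ``under the assumptions of Theorem~\ref{theorem:nonsymetricgeneral}.'' So there is really nothing new to prove about nonsingularity. I do not expect any serious obstacle here: the corollary is essentially a bookkeeping combination of the theorem with the displayed estimate on $\|Q|_{\mathcal{R}(\widehat{E})}\|_{b,b}$. The only point requiring a modicum of care is making sure the two angle quantities are the ones named in \eqref{reveq:dev:alphaE} and \eqref{eq:def:betaf} and that the $\cos$'s land in the numerator and denominator correctly; since $\cos(\alpha_E)=\|\Pi_{\mathcal{R}(E^{T,b}),b}|_{\mathcal{R}(\widehat{E})}\|\le 1$ and $1/\cos(\beta_{E,F})=\|Q\|\ge 1$, the factor $\cos(\alpha_E)/\cos(\beta_{E,F})$ is the honest replacement for $\|Q|_{\mathcal{R}(\widehat{E})}\|_{b,b}$, and one may optionally remark that it recovers the earlier symmetric-case bound when $F=E$ (so $\beta_{E,F}=0$).
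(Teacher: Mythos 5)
Your proposal is correct and follows essentially the same route as the paper: the corollary is obtained by inserting the displayed bound $\|Q|_{\mathcal{R}(\widehat{E})}\|_{b,b}\leq \cos(\alpha_E)/\cos(\beta_{E,F})$ into the first corollary of Theorem~\ref{theorem:nonsymetricgeneral}, together with the submultiplicative estimate $\|FE^{T,b}\|_{a,a}\leq \|F\|_{b,a}\|E\|_{b,a}$. Your explicit justification that $\|E^{T,b}\|_{a,b}=\|E\|_{b,a}$ is a welcome detail the paper leaves implicit.
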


Then we can try to study the angle related to subspaces 
$\mathcal{R}(F^{T,b})$ and $\mathcal{R}(\widehat{F})$, 
 $\mathcal{R}(\widehat{F})$ and  $\mathcal{R}(\widehat{E})$ and 
 the angles between {\color{black}subspaces} $\mathcal{R}(\widehat{E})$ and 
  $\mathcal{R}(E^{T,b})$. 
  Recall that we have (\ref{angleEtilde}) and the analogous expression 
  for $F$, that is  
  \begin{equation}\label{angleEtilde2}
\cos(\Theta_b(\mathcal{R}(\widehat{F}) , \mathcal{R}(F^{T,b}))=\sin(\theta_b(\mathcal{R}(\widehat{F}) , \mathcal{N}(F)) = \frac{1}{\| Q_F \|_{\color{black}b,b}} \geq 
\frac{1}{\|\widehat{F}\|_{\color{black}a,b} \|F\|_{\color{black}b,a}}.
\end{equation}
Here $Q_F=\widehat{F}F$.

      In order to make the presentation simpler we only present the case where we can chose $\widehat{E}$ and $\widehat{F}$ such that 
$\mathcal{R}(\widehat{F})=\mathcal{R}(\widehat{E})$. Recall that 
$Q_E=\widehat{E}E$ and $Q_F=\widehat{F}F$.

\begin{theorem} Consider the assumptions of Theorem \ref{theorem:nonsymetricgeneral}.
Assume additionally $E\not = F$ and that  the following two conditions hold,
\begin{enumerate}
\item We 
can chose $\color{black}\widehat{E}$ and $\widehat{F}$ such that 
$\widehat{H}=\mathcal{R}(\widehat{F})=\mathcal{R}(\widehat{E})
\subset G$.  
\item 
It holds 
\[
\beta_E+
 \beta_F< \frac{\pi}{2},
\]
where $$\beta_E=\Theta_b(\widehat{H} , \mathcal{R}(E^{T,b})) = 
\cos^{-1}(\|Q_E\|^{-1}_{b,\color{black}b},
$$ and $$\beta_F=\Theta_b( \widehat{H} , \mathcal{R}(F^{T,b}))=
\cos^{-1}(\|Q_F\|^{-1}_{b,\color{black}b}.$$
\end{enumerate}
Then we have, 
$$\|(EF^{T,b})^{-1}\|_{\color{black}a,a}\leq 
\frac{\cos(\alpha_E)}{\cos(\beta_{E}+\beta_{F})}\|\widehat{E}\|_{\color{black}a,b}\|\widehat{F}\|_{\color{black}b,a}.$$
\end{theorem}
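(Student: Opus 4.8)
The plan is to run, for the operator $EF^{T,b}$, the three--step construction of Theorem~\ref{theorem:nonsymetricgeneral}, thereby reducing the estimate to a bound on the norm of a single oblique projection restricted to $\widehat H$, and then to replace the angle $\beta_{E,F}$ of \eqref{eq:def:betaf} by $\beta_E+\beta_F$ by inserting the common intermediate subspace $\widehat H$ into a triangle inequality for the maximal angle $\Theta_b$.

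First I would carry out the reduction. Given $u\in H$, put $y=\widehat E u$, so $Ey=u$, $\|y\|_b\le\|\widehat E\|_{a,b}\|u\|_a$, and, by hypothesis~1, $y\in\mathcal R(\widehat E)=\widehat H$. Let $Q=Q(\mathcal R(F^{T,b}),\mathcal N(E))$ and put $x=Qy$: then $x\in\mathcal R(F^{T,b})$, $y-x\in\mathcal N(E)$ hence $Ex=Ey=u$, and $\|x\|_b\le\|Q|_{\widehat H}\|_{b,b}\|y\|_b$. Finally put $w=\widehat F^{T,b}x$; since $x\in\mathcal R(F^{T,b})$ and $F^{T,b}$ is injective (stable right inverse hypothesis, via Lemma~\ref{lions}), $F^{T,b}w=x$, so $EF^{T,b}w=Ex=u$ and $\|w\|_a\le\|\widehat F\|_{b,a}\|x\|_b$. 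Thus $EF^{T,b}$ is invertible with $\|(EF^{T,b})^{-1}\|_{a,a}\le\|Q|_{\widehat H}\|_{b,b}\,\|\widehat E\|_{a,b}\,\|\widehat F\|_{b,a}$. Moreover, since $\mathcal N(Q)=\mathcal N(E)=\mathcal R(E^{T,b})^{\perp,b}$, we have $Qy=Q\,\Pi_{\mathcal R(E^{T,b}),b}y$ for every $y$, and therefore
\[
\|Q|_{\widehat H}\|_{b,b}\ \le\ \|Q|_{\mathcal R(E^{T,b})}\|_{b,b}\ \|\Pi_{\mathcal R(E^{T,b}),b}|_{\widehat H}\|_{b,b}\ \le\ \|Q\|_{b,b}\,\cos\alpha_E ,
\]
where $\|\Pi_{\mathcal R(E^{T,b}),b}|_{\widehat H}\|_{b,b}=\cos\theta_b(\widehat H,\mathcal R(E^{T,b}))=\cos\alpha_E$ exactly as in the proof of the corollary following Theorem~\ref{theorem:nonsymetricgeneral}, and \eqref{angle}--\eqref{Thetatheta} give $\|Q\|_{b,b}=1/\sin\theta_b(\mathcal R(F^{T,b}),\mathcal N(E))=1/\cos\beta_{E,F}$. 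Hence $\|Q|_{\widehat H}\|_{b,b}\le\cos(\alpha_E)/\cos(\beta_{E,F})$, and it remains to prove $\beta_{E,F}\le\beta_E+\beta_F$.

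For that I would establish a triangle inequality for the maximal angle: if $X,Y,Z$ are closed subspaces of $G$ of the same dimension, then $\cos\Theta_b(X,Z)\ge\cos\Theta_b(X,Y)\cos\Theta_b(Y,Z)-\sin\Theta_b(X,Y)\sin\Theta_b(Y,Z)$, and the right-hand side is $\cos(\Theta_b(X,Y)+\Theta_b(Y,Z))$. Indeed, writing $\cos\Theta_b(X,Z)=\inf_{z\in Z,\,\|z\|_b=1}\|\Pi_{X,b}z\|_b$ and splitting $\Pi_{X,b}\Pi_{Z,b}=\Pi_{X,b}\Pi_{Y,b}\Pi_{Z,b}+\Pi_{X,b}(I-\Pi_{Y,b})\Pi_{Z,b}$, on a unit $z\in Z$ the first term has norm at least $\cos\Theta_b(X,Y)\|\Pi_{Y,b}z\|_b\ge\cos\Theta_b(X,Y)\cos\Theta_b(Y,Z)$, while the idempotency of $I-\Pi_{Y,b}$ yields
\[
\|\Pi_{X,b}(I-\Pi_{Y,b})\Pi_{Z,b}\|_{b,b}=\|\Pi_{X,b}(I-\Pi_{Y,b})\cdot(I-\Pi_{Y,b})\Pi_{Z,b}\|_{b,b}\le\sin\Theta_b(X,Y)\sin\Theta_b(Y,Z),
\]
so subtracting gives the claim. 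Applying this with $X=\mathcal R(F^{T,b})$, $Y=\widehat H$, $Z=\mathcal R(E^{T,b})$ — all isomorphic to $H$, since $F^{T,b}$, $\widehat E$, $E^{T,b}$ are bounded below — gives $\cos\beta_{E,F}\ge\cos(\beta_E+\beta_F)$; since $\beta_E+\beta_F<\pi/2$ by hypothesis~2 and $\cos$ is decreasing on $[0,\pi/2]$, this gives $\beta_{E,F}\le\beta_E+\beta_F$ and $1/\cos\beta_{E,F}\le 1/\cos(\beta_E+\beta_F)$. Combining with the preceding paragraph yields $\|(EF^{T,b})^{-1}\|_{a,a}\le\frac{\cos\alpha_E}{\cos(\beta_E+\beta_F)}\,\|\widehat E\|_{a,b}\,\|\widehat F\|_{b,a}$.

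The step I expect to be the main obstacle is the triangle inequality for $\Theta_b$. A crude distance--triangle argument only produces $\sin\beta_{E,F}\le\sin\beta_E+\sin\beta_F$, i.e.\ $\cos\beta_{E,F}\ge\sqrt{1-(\sin\beta_E+\sin\beta_F)^2}$, which is strictly weaker than $\cos(\beta_E+\beta_F)$ and can even be negative; what makes the sharp bound go through is the \emph{multiplicative} control $\|\Pi_{X,b}(I-\Pi_{Y,b})\Pi_{Z,b}\|_{b,b}\le\sin\Theta_b(X,Y)\sin\Theta_b(Y,Z)$ of the cross term, which rests on $I-\Pi_{Y,b}$ being a projection and on $X,Y,Z$ being equidimensional so that $\cos\Theta_b$ is really the lower modulus of the cross Gram operator. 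Verifying these two points — and noting that hypothesis~1 is precisely what lets a single middle subspace do the job — is where the care is needed. Alternatively, the triangle inequality for this angle between equidimensional subspaces is already available in \cite{Daniel,2p,galantai2013projectors} and may be quoted directly.
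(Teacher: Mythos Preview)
The paper does not give a proof of this theorem; it is stated immediately after Corollary~\ref{corollary:betaf} and the discussion around \eqref{angleEtilde}--\eqref{angleEtilde2}, with only a pointer to \cite{Daniel,2p,galantai2013projectors} for further projection/angle identities. Your proposal therefore supplies what the paper omits, and it does so along exactly the line the surrounding text suggests: rerun the three-step construction of Theorem~\ref{theorem:nonsymetricgeneral} for $EF^{T,b}$ (swap the roles of $E$ and $F$), land on the bound $\cos(\alpha_E)/\cos(\beta_{E,F})$ via the factorization $Q=Q\,\Pi_{\mathcal R(E^{T,b}),b}$, and then pass from $\beta_{E,F}$ to $\beta_E+\beta_F$ by a triangle inequality for the maximal angle.

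Your argument is correct. Two small points worth tightening. First, in step~3 you write $w=\widehat F^{T,b}x$: this works because $F\widehat F=I$ gives $\widehat F^{T,b}F^{T,b}=I$, so on $\mathcal R(F^{T,b})$ the map $\widehat F^{T,b}$ is precisely the inverse of $F^{T,b}$, and $\|\widehat F^{T,b}\|_{b,a}=\|\widehat F\|_{a,b}$ (the subscript $\|\widehat F\|_{b,a}$ in the paper's statement is evidently a misprint, as in Theorem~\ref{theorem:nonsymetricgeneral}). Second, the triangle inequality for $\Theta_b$ that you prove uses $\cos\Theta_b(X,Z)=\inf_{z\in Z,\|z\|_b=1}\|\Pi_{X,b}z\|_b$, whereas the paper's definition takes the infimum over $x\in X$; these coincide here because each of $\mathcal R(E^{T,b})$, $\mathcal R(F^{T,b})$, $\widehat H$ is isomorphic to $H$ and the corresponding gaps are strictly less than~$1$ (this is guaranteed by $\|Q_E\|_{b,b},\|Q_F\|_{b,b}<\infty$ and hypothesis~2). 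With that symmetry in hand, your splitting $\Pi_{X,b}=\Pi_{X,b}\Pi_{Y,b}+\Pi_{X,b}(I-\Pi_{Y,b})$ together with $\|\Pi_{X,b}(I-\Pi_{Y,b})\|_{b,b}=\|(I-\Pi_{Y,b})\Pi_{X,b}\|_{b,b}=\sin\Theta_b(X,Y)$ gives the sharp bound $\cos\Theta_b(X,Z)\ge\cos(\Theta_b(X,Y)+\Theta_b(Y,Z))$, which is exactly what is needed.
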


For other characterizations of $\|Q\|_{a,b}$ and  $\|Q|_{\mathcal{R}(\widehat{E})}\|_{a,b}$ that may lead to possible analysis of nonsymmetric method see \cite{Daniel, 2p, galantai2013projectors}.

\subsection{Special nonsymmetric methods}\label{specialanalysis}

We consider the case of the family of nonsymmetric methods of Section 
\ref{sec:nonsymmetric}, in particular we focus on Example 
\ref{especial}. For simplicity of the presentation we consider only the case  where $S=R$. 
The general case can be also consider from the results presented next. In the case $S=R$
 we  can estimate the norm    $\|Q|_{\mathcal{R}(\widehat{E})}\|_{\color{black}b,b}$ in a simple way.

\begin{figure}[ht]
    \centering
    \includegraphics[width=0.65\textwidth]{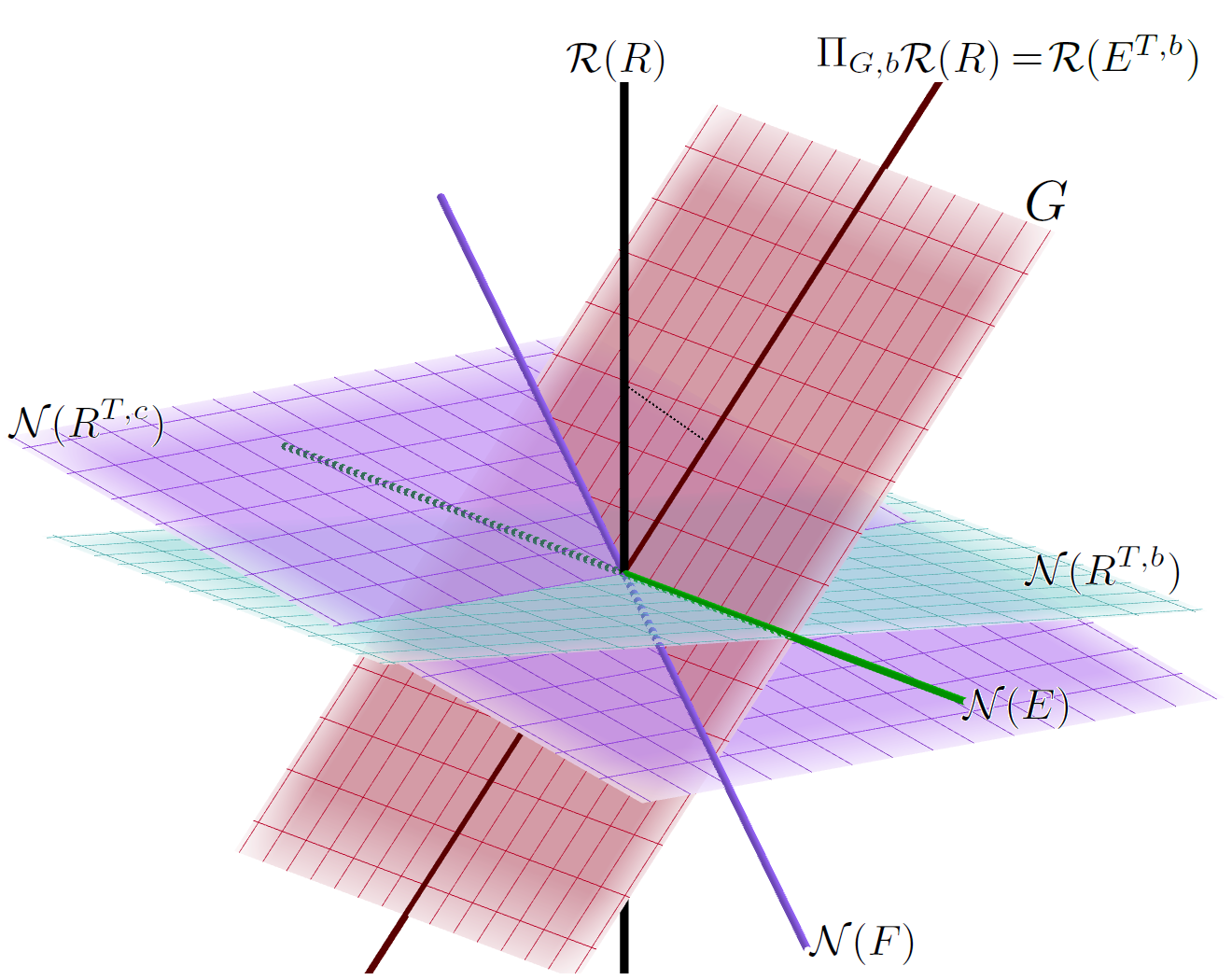}
    \caption{Illustration  of relative position of subspaces of ${\color{black}\widehat{G}}$. In particular, we illustrate the angle between subspaces 
    $\mathcal{N}(E)$ and $\mathcal{N}(F)$. }\label{fig3}
\end{figure}

%


\begin{theorem}\label{final} Assume there is a bounded restriction operator $R:H\to {\color{black}\widehat{G}}$
and bilinear forms $b$ and $c$ such that $E=R^{T,b}|_G$ and $F=R^{T,c}|_G$ where 
$G\subset {\color{black}\widehat{G}}$. Suppose that the extensions operators $E$ and $F$ have stable right 
inverse $\widehat{E}$ and $\widehat{F}$, respectively.
Assume also that 
\[
 c(\phi,\phi) \leq r^2_0 b(\phi,\phi) \mbox{ for all }\phi \in {\color{black}\widehat{G}}
\]
and 
\[b(\psi,\psi)\leq r_1^2 c(\psi,\psi) \mbox{ for all } 
\psi \in \mathcal{R}(R).
\]
We have 
\[
\|\Pi_{\mathcal{R}(R),c}\|_{\color{black}b,b} \leq r_0r_1
\]
and therefore, 
\begin{equation}
    \cos \left( 
\Theta_b
\left( \mathcal{N}(R^{T,b} ),  \mathcal{N}(R^{T,c}) \right)   \right) = 
\frac{1}
{
\|\Pi_{\mathcal{R}(R),c}\|_{\color{black}b,b} 
}
\geq \frac{1}{ r_0r_1 }. 
\end{equation}
Finally we can bound 
\begin{equation}\label{FETfinal}
\kappa( FE^{T,b} )= \kappa( R^{T,c} \Pi_{G,b} R  ) 
\leq   \|\widehat{F}\|_{\color{black}a,b}\|F\|_{\color{black}b,a}\cos(\alpha_E) r_0r_1 
\|E\|_{\color{black}b,a}\|\widehat{E}\|_{\color{black}a,b}.
\end{equation}

\end{theorem}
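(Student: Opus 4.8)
The plan is to assemble Theorem \ref{final} from the machinery already in place, in particular Corollary \ref{corollary:betaf}, by recognizing that the only genuinely new ingredient is the estimate of the angle $\beta_{E,F}=\Theta_b(\mathcal{N}(F),\mathcal{N}(E))$ in terms of the norm of an oblique projection associated to $\mathcal{R}(R)$. First I would record the structural identities that hold in the setting $E=R^{T,b}|_G$, $F=R^{T,c}|_G$: the kernels are $\mathcal{N}(E)=\mathcal{N}(R^{T,b})\cap G$ and $\mathcal{N}(F)=\mathcal{N}(R^{T,c})\cap G$, and the ranges satisfy $\mathcal{R}(E^{T,b})=\Pi_{G,b}\mathcal{R}(R)$, $\mathcal{R}(F^{T,b})$ analogously, as noted after \eqref{symoperatorwithprojection}. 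The point is that $\mathcal{N}(R^{T,b})$ is the $b$-orthogonal complement of $\mathcal{R}(R)$ and $\mathcal{N}(R^{T,c})$ is the $c$-orthogonal complement of $\mathcal{R}(R)$, so the relative position of these two subspaces is governed by how much the $b$- and $c$-geometries disagree on $\mathcal{R}(R)$; this is exactly what the two-sided equivalence hypotheses $c\le r_0^2 b$ on $\widehat{G}$ and $b\le r_1^2 c$ on $\mathcal{R}(R)$ quantify.

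Next I would prove the bound $\|\Pi_{\mathcal{R}(R),c}\|_{b,b}\le r_0 r_1$ on the $c$-orthogonal projection onto $\mathcal{R}(R)$, measured in the $b$-norm. For $\phi\in\widehat{G}$ write $\psi=\Pi_{\mathcal{R}(R),c}\phi\in\mathcal{R}(R)$; then using $b\le r_1^2 c$ on $\mathcal{R}(R)$, the $c$-orthogonality (so $\|\psi\|_c\le\|\phi\|_c$ by Pythagoras in the $c$-inner product), and finally $c\le r_0^2 b$ on all of $\widehat{G}$, I get
\[
\|\psi\|_b^2 \le r_1^2\|\psi\|_c^2 \le r_1^2\|\phi\|_c^2 \le r_1^2 r_0^2\|\phi\|_b^2,
\]
which is the claimed inequality. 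Then, invoking the relation \eqref{angle} between the norm of an oblique projection and the sine of the minimal angle — together with \eqref{Thetatheta} identifying that sine with the cosine of the maximal angle to the orthogonal complement — I identify $\|\Pi_{\mathcal{R}(R),c}\|_{b,b}$ as $1/\cos(\Theta_b(\mathcal{N}(R^{T,b}),\mathcal{N}(R^{T,c})))$: indeed the oblique projection onto $\mathcal{R}(R)$ along its $c$-orthogonal complement $\mathcal{N}(R^{T,c})$ has $b$-norm equal to $1/\sin(\theta_b(\mathcal{R}(R),\mathcal{N}(R^{T,c})))$, and $\sin(\theta_b(\mathcal{R}(R),\mathcal{N}(R^{T,c})))=\cos(\Theta_b(\mathcal{N}(R^{T,b}),\mathcal{N}(R^{T,c})))$ since $\mathcal{N}(R^{T,b})=\mathcal{R}(R)^{\perp,b}$. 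This gives $\cos(\Theta_b(\mathcal{N}(E),\mathcal{N}(F)))\ge 1/(r_0 r_1)$, i.e. $\cos(\beta_{E,F})\ge 1/(r_0 r_1)$.

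Finally I would feed this into Corollary \ref{corollary:betaf}: that corollary already gives
\[
\kappa(FE^{T,b}) \le \|\widehat{F}\|_{a,b}\|F\|_{b,a}\,\frac{\cos(\alpha_E)}{\cos(\beta_{E,F})}\,\|E\|_{b,a}\|\widehat{E}\|_{a,b},
\]
and substituting $1/\cos(\beta_{E,F})\le r_0 r_1$ yields \eqref{FETfinal}. I would also remark that the identity $FE^{T,b}=R^{T,c}\Pi_{G,b}R$ is just \eqref{eq:FETbRTcPiGbR} of Example \ref{especial} with $S=R$, so the condition-number quantity being bounded is the one advertised. The main obstacle — and the step worth stating carefully rather than waving through — is the angle identification: one must be sure that $\mathcal{N}(R^{T,c})$ is the correct complement to project \emph{along} (it is the $c$-orthogonal complement of $\mathcal{R}(R)$, not the $b$-orthogonal one), and that the hypotheses are applied on the correct subspace in each of the three inequalities above — $c\le r_0^2 b$ is needed globally on $\widehat{G}$ but $b\le r_1^2 c$ is only assumed, and only needed, on $\mathcal{R}(R)$. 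Everything else is a direct citation of the projection-angle dictionary in Section \ref{sec:normproj} and of Corollary \ref{corollary:betaf}.
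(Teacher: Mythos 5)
Your proposal is correct and follows essentially the same route as the paper: bound $\|\Pi_{\mathcal{R}(R),c}\|_{b,b}\le r_0r_1$ by combining $b\le r_1^2c$ on $\mathcal{R}(R)$, the $c$-contractivity of the $c$-orthogonal projection, and $c\le r_0^2b$ on $\widehat{G}$; identify $\Pi_{\mathcal{R}(R),c}$ as the oblique projection $Q(\mathcal{R}(R),\mathcal{N}(R^{T,c}))$ so that via \eqref{angle} and \eqref{Thetatheta} its $b$-norm controls $\cos(\beta_{E,F})$ through the monotonicity $\Theta_b(\mathcal{N}(E),\mathcal{N}(F))\le\Theta_b(\mathcal{N}(R^{T,b}),\mathcal{N}(R^{T,c}))$; and then substitute into Corollary \ref{corollary:betaf}. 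The only cosmetic difference is that you run the norm estimate element-wise rather than as a supremum, which changes nothing.
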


\begin{proof} 
 Note that in this case there {\color{black}are} bilinear forms 
 $b$ and $c$ such that $E=R^{T,b}|_G$ and $F=R^{T,c}|_G$. We then have, 
\begin{equation}\label{eq:bc0}
    \mathcal{N}(E)=\mathcal{N}(R^{T,b})\cap G 
    \quad \mbox{ and }\quad 
    \mathcal{N}(F)=\mathcal{N}(R^{T,c})\cap G. 
\end{equation}
Therefore, 
\begin{equation}\label{eq:bc1}
\beta_{E,F}=
\Theta_b\left( \mathcal{N}(E), \mathcal{N}(F)\right)\leq \Theta_b
\left( \mathcal{N}(R^{T,b} ),   \mathcal{N}(R^{T,c}) \right).
\end{equation}
It is clear that  no element in 
$\mathcal{N}(R^{T,b} )$ is $c-$orthogonal  to 
the space $\mathcal{N}(R^{T,c} )$.  Note that (by using \eqref{Thetatheta}),
\begin{equation}\label{eq:bc2}
\cos(
\Theta_b
\left( \mathcal{N}(R^{T,c}), \mathcal{N}(R^{T,b} )    \right)
)=\sin(\theta_b( \mathcal{N}(R^{T,c} ) , \mathcal{R}(R)).
\end{equation}
The $c$-orthogonal projection $\Pi_{\mathcal{R}(R),c}$ is the 
$b-$oblique projection on 
$ \mathcal{R}(R)$  and in the direction of 
$\mathcal{N}(R^{T,c} )$. That is, $\Pi_{\mathcal{R}(R),c}=
Q(\mathcal{R}(R),\mathcal{N}(R^{T,c} ))$. We only need to estimate the $c-$norm of 
this projection. See an illustration in Figure \ref{fig3}.

Due to Corollary \ref{corollary:betaf} we need only to bound 
$\cos(\beta_{E,F})$ defined in \eqref{eq:def:betaf}.  By \eqref{eq:bc0},
\eqref{eq:bc1}, \eqref{eq:bc2} and (\ref{angle})  we need only to bound 
the norm $\|\Pi_{\mathcal{R}(R),c}\|_{a,b}$. 

We have, 
\begin{eqnarray*}
\|\Pi_{\mathcal{R}(R),c}\|_{\color{black}b,b}&=&
\sup_{\phi\not =0} \frac{ \|\Pi_{\mathcal{R}(R),c} \phi\|_{\color{black}b}}{\|\phi\|_{b}}\\
&\leq & r_0r_1 
\sup_{\phi\not =0} \frac{ \|\Pi_{\mathcal{R}(R),c} \phi\|_c}{\|\phi\|_c}
\\&\leq & r_0r_1 .
\end{eqnarray*}
\end{proof}

 If no more information is available about the restriction operator $R$ we can use the  the following result.
\begin{lemma}
Under the assumption of Theorem \ref{final} we can bound
$r_1\leq \|R\|_{a,b} \|\widehat{F}\|_c .$
\end{lemma}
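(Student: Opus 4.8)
The plan is to split the asserted estimate $r_1 \le \|R\|_{a,b}\,\|\widehat{F}\|_c$ (reading $\|\widehat{F}\|_c$ as $\|\widehat{F}\|_{a,c}$, in the notation of Corollary~\ref{corolaryRc}) into two independent pieces: one supplies the factor $\|R\|_{a,b}$ directly from the definition of the operator norm of $R$, and the other supplies $\|\widehat{F}\|_{a,c}$ by exploiting that $\widehat{F}$ is a stable right inverse of $F = R^{T,c}|_G$.

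I would begin with an arbitrary $\psi \in \mathcal{R}(R)$, written as $\psi = Rv$ for some $v \in H$. Then $\|\psi\|_b = \|Rv\|_b \le \|R\|_{a,b}\,\|v\|_a$, so the whole problem reduces to bounding $\|v\|_a$ by a multiple of $\|Rv\|_c = \|\psi\|_c$. To obtain such a bound, the key move is to test the identity $F\widehat{F}v = v$ in the $a$-inner product against $v$ itself and then transfer $F$ across via its defining relation $a(F\phi, w) = c(\phi, Rw)$, which applies because $\widehat{F}v \in G$. Concretely, taking $\phi = \widehat{F}v$ and $w = v$,
\[
\|v\|_a^2 = a(F\widehat{F}v, v) = c(\widehat{F}v, Rv) \le \|\widehat{F}v\|_c\,\|Rv\|_c \le \|\widehat{F}\|_{a,c}\,\|v\|_a\,\|Rv\|_c,
\]
so $\|v\|_a \le \|\widehat{F}\|_{a,c}\,\|Rv\|_c$. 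Substituting this into the previous inequality yields $\|\psi\|_b \le \|R\|_{a,b}\,\|\widehat{F}\|_{a,c}\,\|\psi\|_c$ for every $\psi \in \mathcal{R}(R)$, which is exactly the claim by the definition of $r_1$.

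I do not expect a genuine obstacle; the only care needed is bookkeeping. One should note that the intermediate estimate $\|v\|_a \le \|\widehat{F}\|_{a,c}\|Rv\|_c$ holds for every $v \in H$, so it is immaterial which preimage of $\psi$ under $R$ one picks and the bound transfers to $\psi$ automatically; one should keep track that $\widehat{F}$ maps into $G$, so that the defining relation of $F = R^{T,c}|_G$ (and not merely that of $R^{T,c}$ on all of $\widehat{G}$) may be invoked; and the Cauchy--Schwarz step uses only that $c$ is a nonnegative symmetric form, as is implicit throughout. Informally, the lemma is the quantitative version of the remark that a bounded right inverse of $F$ forces $R$ to be bounded below in the $c$-norm on its range, and it is precisely that lower bound which is repackaged as the constant $r_1$ in Theorem~\ref{final}.
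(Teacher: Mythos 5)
Your proposal is correct and follows essentially the same route as the paper: the paper factors the estimate into $\|Rw\|_b\leq\|R\|_{a,b}\|w\|_a$ together with the lower bound $\|w\|_a\leq\|\widehat{F}\|_{a,c}\|Rw\|_c$ obtained from Corollary~\ref{corolaryRc} (with $S=R$), and your duality computation $\|v\|_a^2=a(F\widehat{F}v,v)=c(\widehat{F}v,Rv)$ is precisely the Lions-lemma argument that proves that corollary, merely unwound inline. Your reading of $\|\widehat{F}\|_c$ as $\|\widehat{F}\|_{a,c}$ also matches the paper's usage.
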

\begin{proof}
Note that for any $\psi=Rw\in \mathcal{R}(R)$ we can combine Corollaries 
\ref{corolaryR2} and \ref{corolaryRc} (with $S=R$) to obtain 
\begin{equation}
    \|\psi\|_{b} \leq \|R\|_{\color{black}a,b} \|w\|_a\leq \|R\|_{\color{black}a,b}\|\widehat{F}\|_{a,c}\|\psi\|_c .
\end{equation}
\end{proof}

In the case of the operator in \eqref{eq:FETbRTcPiGbR}, 
we note that if the image of $R$ is in the {\color{black}appropriate} relative position (in sense of angles measured in the $c-$inner product) with respect to the subspace $G$ and $G^{\perp,b}$, then  the operator 
\eqref{eq:FETbRTcPiGbR} is positive definite in the sense that 
$c( FE^{T,b}u,u)\geq 0$ for all $u\in H$. See Remark \ref{positivity}.

Note that 
$\Pi_{G,b}$ is the ($c$-oblique) projection onto 
$G$ and in the direction of
$G^{\perp,b}$. 
Let us consider $y\in G$,
$z\in G^{\perp,c}$ and put 
$x=y+z$. Any $x\in {\color{black}\widehat{G}}$ can be obtained 
in this manner.  Note that
$\Pi_{G,b}x=y+\Pi_{G,b}z$
\[
c(x,\Pi_{G,b}x)=
c(y,y)+c(y,\Pi_{G,b}z)
\geq 
\frac{1}{2}\left( 
\|y\|_c^2 -
\|y\|_c\|\Pi_{G,b}z\|_c \right).
\]
We conclude that if 
$x$ is such that 
\begin{equation}
    \|\Pi_{G,b}z\|_c^2 \leq 
    \|y\|_c^2
\end{equation}
then $c(x,\Pi_{G,b}x)\geq 0$. 
This happens in particular if 
\begin{equation}\label{eq:aux46}
    \|z\|_c^2 
    \|\Pi_{G,b}|_{ G^{\perp,c}}\|_c^2 \leq 
    \|y\|_c^2.
\end{equation}
We have the following result. 
\begin{theorem}\label{boundedtan}
Assume that ${\color{black}\widehat{G}}$ is finite dimensional. If 
$y\in G$,
$z\in G^{\perp,c}$ and $x=y+z$ with 
$\|z\|_c  \tan(\Theta_c\left( G^{\perp,b},
         G^{\perp,c}\right))\leq  \alpha\|y\|_c $ with 
         $\alpha\leq 1$, then 
${\color{black}c}(x,\Pi_{G,b}x)\geq (1-\alpha^2) \|y\|^2_c$.

\end{theorem}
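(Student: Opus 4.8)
The plan is to follow the decomposition $x = y + z$ with $y \in G$, $z \in G^{\perp,c}$ and to make the estimate already sketched just before the statement quantitative, replacing the crude bound $\|\Pi_{G,b} z\|_c \le \|z\|_c \, \|\Pi_{G,b}|_{G^{\perp,c}}\|_c$ by a sharp trigonometric identity for $\|\Pi_{G,b}|_{G^{\perp,c}}\|_c$ in terms of the maximal angle $\Theta_c(G^{\perp,b}, G^{\perp,c})$. First I would record the exact computation
\[
c(x, \Pi_{G,b} x) = c(y + z,\, y + \Pi_{G,b} z) = \|y\|_c^2 + c(y, \Pi_{G,b} z),
\]
where the cross term $c(z, y)$ vanishes because $z \in G^{\perp,c}$ and $y \in G$, and the term $c(z, \Pi_{G,b} z)$ also vanishes since $\Pi_{G,b} z \in G$. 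Then Cauchy--Schwarz gives $c(x, \Pi_{G,b} x) \ge \|y\|_c^2 - \|y\|_c \, \|\Pi_{G,b} z\|_c$.

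The key step is to identify $\|\Pi_{G,b} z\|_c$ for $z \in G^{\perp,c}$. I would argue that for $z \in G^{\perp,c}$, the vector $\Pi_{G,b} z$ is the component of $z$ along $G$ in the (oblique, $c$-)splitting $\widehat{G} = G \oplus G^{\perp,b}$; writing $z = \Pi_{G,b} z + w$ with $w \in G^{\perp,b}$ and using $z \perp_c G$, one gets $\|\Pi_{G,b} z\|_c^2 = -c(\Pi_{G,b} z, w) = c(\Pi_{G,b} z, z) - \|\Pi_{G,b}z\|_c^2$ --- more usefully, $\Pi_{G,b} z \in G$ while $w \in G^{\perp,b}$ and the angle between $w$ (living in $G^{\perp,b}$) and $z$ (living in $G^{\perp,c}$) controls the ratio $\|\Pi_{G,b} z\|_c / \|z\|_c$. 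Concretely, since $z = \Pi_{G,b} z + w$ is an orthogonal-in-direction decomposition with $z \in G^{\perp,c}$, $w \in G^{\perp,b}$, elementary plane geometry in the two-dimensional span shows $\|\Pi_{G,b} z\|_c = \|z\|_c \tan\bigl(\angle_c(z, w)\bigr) \le \|z\|_c \tan\bigl(\Theta_c(G^{\perp,c}, G^{\perp,b})\bigr)$, where finite-dimensionality of $\widehat{G}$ guarantees the supremum defining $\Theta_c$ is attained and is strictly less than $\pi/2$ (so the tangent is finite). This is the step I expect to be the main obstacle: making the passage from the oblique splitting to the tangent of the maximal angle fully rigorous, in particular checking that it is indeed $\Theta_c$ (the maximal angle, with a $\cos$ that is an infimum) and not $\theta_c$ that appears, and handling the case $z$ or $w$ equal to zero.

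Granting the bound $\|\Pi_{G,b} z\|_c \le \|z\|_c \tan\bigl(\Theta_c(G^{\perp,b}, G^{\perp,c})\bigr)$, I substitute into the Cauchy--Schwarz estimate:
\[
c(x, \Pi_{G,b} x) \ge \|y\|_c^2 - \|y\|_c \, \|z\|_c \tan\bigl(\Theta_c(G^{\perp,b}, G^{\perp,c})\bigr) \ge \|y\|_c^2 - \alpha \|y\|_c^2 = (1 - \alpha)\|y\|_c^2,
\]
using the hypothesis $\|z\|_c \tan(\Theta_c(G^{\perp,b}, G^{\perp,c})) \le \alpha \|y\|_c$. To reach the stated constant $(1-\alpha^2)\|y\|_c^2$ rather than $(1-\alpha)\|y\|_c^2$ I would instead retain the cross term more carefully via Young's inequality, splitting off $\|y\|_c \|\Pi_{G,b}z\|_c \le \tfrac{\alpha}{2}\|y\|_c^2 + \tfrac{1}{2\alpha}\|\Pi_{G,b}z\|_c^2$ is not quite it; rather, writing $t := \|z\|_c \tan(\Theta_c(\cdot,\cdot))/\|y\|_c \le \alpha \le 1$, we have $c(x,\Pi_{G,b}x) \ge \|y\|_c^2(1 - t)$, and since $1 - t \ge 1 - \alpha \ge 1 - \alpha^2$ the weaker-looking stated bound follows a fortiori --- alternatively the quadratic refinement $c(y,y) + c(y,\Pi_{G,b}z) \ge \|y\|_c^2 - \tfrac12(\|y\|_c^2 t^2/\alpha + \alpha\|y\|_c^2)$-type manipulation recovers $(1-\alpha^2)$ sharply when the bound is used in the symmetric form $ab \le \tfrac12(a^2+b^2)$ after rescaling. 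I would present whichever of these gives exactly $(1-\alpha^2)$ cleanly; the essential content is complete once the angle bound on $\|\Pi_{G,b}|_{G^{\perp,c}}\|_c$ is established. This also yields $c(FE^{T,b} u, u) \ge 0$ on the cone of such $x = Ru$, as advertised in Remark~\ref{positivity}.
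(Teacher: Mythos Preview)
Your overall strategy coincides with the paper's: both reduce the claim to the operator bound
\[
\|\Pi_{G,b}|_{G^{\perp,c}}\|_{c}\le \tan\!\bigl(\Theta_c(G^{\perp,b},G^{\perp,c})\bigr),
\]
and then feed this into the identity $c(x,\Pi_{G,b}x)=\|y\|_c^2+c(y,\Pi_{G,b}z)$. The paper does not attempt your in-plane geometric argument; instead it factors
\[
\|\Pi_{G,b}|_{G^{\perp,c}}\|_{c}\le \|\Pi_{G,b}\|_{c}\,\sin\!\bigl(\Theta_c(G^{\perp,b},G^{\perp,c})\bigr)
\]
by invoking \cite[Lemma~2.80]{galantai2013projectors}, and then computes $\|\Pi_{G,b}\|_{c}=1/\sin(\theta_c(G,G^{\perp,b}))=1/\cos(\Theta_c(G^{\perp,b},G^{\perp,c}))$ via \eqref{angle} and \eqref{Thetatheta}. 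Your direct route can be made to work, but the step ``$\angle_c(z,w)\le \Theta_c(G^{\perp,c},G^{\perp,b})$'' is not automatic: the maximal angle is \emph{not} a supremum of pairwise angles. The clean way to close it is to note that since $w-z=-\Pi_{G,b}z\in G=(G^{\perp,c})^{\perp,c}$, one has $\Pi_{G^{\perp,c},c}w=z$, hence $\mathrm{dist}_c(w,G^{\perp,c})=\|\Pi_{G,b}z\|_c$, so $\sin(\Theta_c(G^{\perp,b},G^{\perp,c}))\ge \|\Pi_{G,b}z\|_c/\|w\|_c=\sin(\angle_c(z,w))$.

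There is, however, a genuine slip in your final step. You write ``$1-t\ge 1-\alpha\ge 1-\alpha^2$'', but for $\alpha\in(0,1)$ one has $\alpha^2<\alpha$ and hence $1-\alpha<1-\alpha^2$, the reverse of what you claim. Thus the Cauchy--Schwarz bound $c(x,\Pi_{G,b}x)\ge(1-\alpha)\|y\|_c^2$ is \emph{weaker} than the stated $(1-\alpha^2)\|y\|_c^2$ and does not imply it a fortiori; your Young-inequality variant gives only $\tfrac12(1-\alpha^2)\|y\|_c^2$, which is weaker still. So as written your argument does not reach the constant asserted in the theorem. (The paper's own proof is terse on this point and appeals back to the discussion around \eqref{eq:aux46}; the substantive content in both arguments is the tangent bound, which you do obtain modulo the angle step above.)
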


\begin{proof}
According to \eqref{eq:aux46} we need only to bound 
$\|\Pi_{G,b}|_{ G^{\perp,c}}\|_c$. 
Recall that $\Pi_{G,b}$ is the $c-$oblique projection onto 
$G$ and in the direction of $ G^{\perp,b}$. Using \cite[Lemma 2.80 (p.76)]{galantai2013projectors} we can 
bound the norm  $\|\Pi_{G,b}|_{ G^{\perp,c}}\|_c$ as follows, 
\begin{eqnarray}
        \|\Pi_{G,b}|_{ G^{\perp,c}}\|_{\color{black}c,c} \leq 
         \|\Pi_{G,b}\|_{\color{black}c,c}\sin\left( \Theta_c\left( G^{\perp,b},
         G^{\perp,c}\right)\right)\\
         =\frac{\sin\left( \Theta_c\left( G^{\perp,b},
         G^{\perp,c}\right)\right)}{\sin(\theta_c(G, G^{\perp,b} ))}\\
     =\frac{\sin\left( \Theta_c\left( G^{\perp,b},
         G^{\perp,c}\right)\right)
     }{\cos(\Theta_c\left( G^{\perp,b},
         G^{\perp,c}\right))} \\
                  =\tan(\Theta_c\left( G^{\perp,b},
         G^{\perp,c}\right)).
\end{eqnarray}
Here we have used \eqref{angle}.
\end{proof}

Introduce the operator $\color{black}C:{\color{black}\widehat{G}}\to {\color{black}\widehat{G}}$ defined by 
\begin{equation}\label{eq:defC}
    b(Cu,v)=c(u,v).
\end{equation}
This operator is symmetric and positive definite. We recall the  Wielandt inequality. See for instance \cite{MR2997829,galantai2013projectors}.
\begin{lemma}Assume that  $m b(x,x)\leq c(x,x)\leq Mb(x,x)$ for all 
$x\in {\color{black}\widehat{G}}$. For any pair of vectors $z,w$ with $b(z,w)=0$ we have 
\begin{equation}
    \frac{b(z,Cw)^2}{b(z,Cz)b(w,Cw)} \leq \left(\frac{M-m}{M+m}\right)^2
\end{equation}
\end{lemma}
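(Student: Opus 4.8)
\emph{Proof proposal.} This is the classical Wielandt inequality, and the plan is to reduce it to an elementary fact about $2\times 2$ symmetric positive definite matrices. First I would note that the left--hand quotient is unchanged if $z$ and $w$ are rescaled by nonzero scalars, and that such rescalings preserve the orthogonality $b(z,w)=0$; hence I may assume $\|z\|_b=\|w\|_b=1$, so that $\{z,w\}$ is a $b$--orthonormal pair. Let $A=\begin{pmatrix} a & d\\ d & e\end{pmatrix}$ be the matrix of the bilinear form $c$ in this basis, i.e. $a=b(z,Cz)$, $e=b(w,Cw)$, $d=b(z,Cw)$. With this normalization the left--hand side of the asserted inequality is exactly $d^2/(ae)$.

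The key step is that the eigenvalues $\lambda_1\le\lambda_2$ of $A$ lie in $[m,M]$. Indeed, for every $b$--unit vector $v\in\operatorname{span}\{z,w\}$ the hypothesis gives $m\le b(v,Cv)\le M$, and $b(v,Cv)$ is precisely the Rayleigh quotient of $A$ at the coordinate vector of $v$; since the extreme eigenvalues of a symmetric matrix are its extreme Rayleigh quotients, we get $m\le\lambda_1\le\lambda_2\le M$ (no spectral theorem on $\widehat G$ is needed, only that the bounds on $C$ descend to this two--dimensional compression). Now I would use the trace and determinant identities $\lambda_1+\lambda_2=a+e$ and $\lambda_1\lambda_2=ae-d^2$. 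From the second, $d^2=ae-\lambda_1\lambda_2$, so
\[
\frac{d^2}{ae}=1-\frac{\lambda_1\lambda_2}{ae},
\]
and maximizing $ae$ under the trace constraint via the AM--GM inequality, $ae\le\big((\lambda_1+\lambda_2)/2\big)^2$, yields
\[
\frac{d^2}{ae}\le 1-\frac{4\lambda_1\lambda_2}{(\lambda_1+\lambda_2)^2}=\frac{(\lambda_1-\lambda_2)^2}{(\lambda_1+\lambda_2)^2}.
\]

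Finally I would maximize the right--hand side over $\lambda_1,\lambda_2\in[m,M]$: writing $r=\lambda_1/\lambda_2\le 1$ it equals $\big((1-r)/(1+r)\big)^2$, which is decreasing in $r$, hence largest when $r$ is smallest, i.e. at $\lambda_1=m$, $\lambda_2=M$, giving $\big((M-m)/(M+m)\big)^2$. Chaining the three displayed inequalities gives the claim. I do not anticipate a real obstacle; the argument is essentially bookkeeping, and the one point deserving a line of care is the trapping of the eigenvalues of the compression $A$ inside $[m,M]$, justified as above through Rayleigh quotients. As an alternative I could avoid the $2\times 2$ reduction by working with $C^{1/2}$: after normalizing $b(z,Cz)=b(w,Cw)=1$ one checks that $u=z+w$ and $v=z-w$ satisfy $c(u,v)=0$ and $b(u,u)=b(v,v)$, and sandwiching $b(u,u)=b(v,v)$ between $\tfrac1M\|C^{1/2}u\|_b^2$ and $\tfrac1m\|C^{1/2}v\|_b^2$ gives $\tfrac{1+\rho}{M}\le\tfrac{1-\rho}{m}$ with $\rho=b(z,Cw)\ge 0$ (after possibly flipping the sign of $w$), which rearranges to the desired bound; I would include this only as a remark.
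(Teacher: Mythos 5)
Your proof is correct, but there is nothing in the paper to compare it against: the lemma is stated as a recollection of the classical Wielandt inequality and the paper simply points to the references \cite{MR2997829,galantai2013projectors} without giving a proof. Your argument therefore supplies what the paper delegates to the literature, and it does so soundly. The $2\times 2$ reduction is legitimate: the quotient is scale-invariant, so you may take $\{z,w\}$ $b$-orthonormal, and then $c(v,v)/\|v\|_b^2$ is exactly the Euclidean Rayleigh quotient of the compression $A$, which traps $\lambda_1,\lambda_2$ in $[m,M]$; the trace/determinant identities plus AM--GM then give $d^2/(ae)\le(\lambda_2-\lambda_1)^2/(\lambda_2+\lambda_1)^2\le((M-m)/(M+m))^2$. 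Your alternative via $u=z+w$, $v=z-w$ is the other standard route and also checks out. Two small points worth a sentence if you write this up: (i) the statement implicitly requires $z,w\neq 0$ (otherwise a denominator vanishes), and then $b(z,w)=0$ forces linear independence, so the two-dimensional compression really is $2\times 2$; (ii) the step from $ae\le\bigl((\lambda_1+\lambda_2)/2\bigr)^2$ to $\lambda_1\lambda_2/(ae)\ge 4\lambda_1\lambda_2/(\lambda_1+\lambda_2)^2$ uses $\lambda_1\lambda_2>0$, i.e.\ $m>0$, which is the (implicit) positive definiteness of $c$ assumed throughout the paper.
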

Taking $x=C^{-1/2}z$ { \color{black} and} $y=C^{1/2}w$ we obtain, 

\begin{corollary}\label{corwielandt}
For any pair of vectors $x,y$ with $b(x,y)=0$ we have 
\begin{equation}
    \frac{b(x,Cy)^2}{b(x,x)b(Cy,Cy)} \leq \left(\frac{M-m}{M+m}\right)^2
\end{equation}
\end{corollary}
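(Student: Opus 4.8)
The plan is to obtain Corollary~\ref{corwielandt} from the Wielandt inequality (the preceding lemma) by a change of variables implemented through the square root of the operator $C$. First I would record the relevant facts about $C$: since $b(Cu,v)=c(u,v)$ with $c$ symmetric, $C$ is self-adjoint for the inner product $b$, and the hypothesis $m\,b(x,x)\le c(x,x)\le M\,b(x,x)$ says exactly that the $b$-Rayleigh quotient of $C$, namely $b(Cx,x)/b(x,x)=c(x,x)/b(x,x)$, lies in $[m,M]\subset(0,\infty)$; hence the $b$-spectrum of $C$ is contained in $[m,M]$. Therefore $C$ is positive definite and, by the functional calculus (in finite dimensions, simply diagonalize $C$ in a $b$-orthonormal eigenbasis), the operators $C^{1/2}$ and $C^{-1/2}$ are well defined, bounded, $b$-self-adjoint, positive definite, and satisfy $C^{1/2}C^{1/2}=C$ and $C^{-1/2}CC^{-1/2}=\mathrm{Id}$.

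Next, given $x,y\in\widehat{G}$ with $b(x,y)=0$ (we may assume $x,y\neq 0$, as otherwise the quotient is undefined), I would set $z=C^{-1/2}x$ and $w=C^{1/2}y$; this is, up to interchanging $x$ and $y$, the substitution indicated before the statement. Using the $b$-self-adjointness of $C^{\pm1/2}$ one checks $b(z,w)=b(C^{-1/2}x,C^{1/2}y)=b(x,y)=0$, so the Wielandt lemma applies to the pair $z,w$ and gives $b(z,Cw)^2\le\big(\tfrac{M-m}{M+m}\big)^2\,b(z,Cz)\,b(w,Cw)$, with $b(z,Cz)=c(z,z)>0$ and $b(w,Cw)=c(w,w)>0$.

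It then remains to translate each inner product back to $x$ and $y$ by moving the factors $C^{\pm1/2}$ across $b$ and collapsing powers of $C$: $b(z,Cw)=b(C^{-1/2}x,C^{3/2}y)=b(x,Cy)$, $b(z,Cz)=b(C^{-1/2}x,C^{1/2}x)=b(x,x)$, and $b(w,Cw)=b(C^{1/2}y,C^{3/2}y)=b(Cy,Cy)$. Substituting these identities into the displayed Wielandt inequality yields precisely $b(x,Cy)^2\le\big(\tfrac{M-m}{M+m}\big)^2\,b(x,x)\,b(Cy,Cy)$, which is the claim after dividing by $b(x,x)\,b(Cy,Cy)>0$. (If instead one uses literally $x=C^{-1/2}z$, $y=C^{1/2}w$ as written before the statement, the same computation produces $b(x,Cy)^2\le\big(\tfrac{M-m}{M+m}\big)^2\,b(Cx,Cx)\,b(y,y)$, and one recovers the stated form by exchanging the roles of $x$ and $y$, which is harmless because $b(x,Cy)=b(y,Cx)$ and the condition $b(x,y)=0$ is symmetric.)

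There is essentially no hard step here: the whole proof is a similarity substitution, and the only point deserving attention is the legitimacy of $C^{1/2}$, which is immediate from $C$ being $b$-self-adjoint with spectrum bounded below by $m>0$. An alternative that sidesteps square roots would be to redo the two-dimensional Cauchy--Schwarz argument underlying Wielandt's inequality directly with respect to the $c$-inner product applied to $x$ and $Cy$; but the change-of-variables route above is shorter and reuses the lemma verbatim.
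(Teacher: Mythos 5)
Your proof is correct and takes essentially the same route as the paper, which gives no argument beyond the one line ``Taking $x=C^{-1/2}z$ and $y=C^{1/2}w$ we obtain'' the corollary; your substitution $z=C^{-1/2}x$, $w=C^{1/2}y$ is the version that lands directly on the stated denominator $b(x,x)\,b(Cy,Cy)$, and your parenthetical correctly observes that the paper's literal substitution yields the transposed but equivalent form $b(x,Cy)^2\leq\bigl(\tfrac{M-m}{M+m}\bigr)^2 b(Cx,Cx)\,b(y,y)$. Nothing further is needed.
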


We see that $\cos(\theta_c(G, G^{\perp,b} )) \leq \left(\frac{M-m}{M+m}\right)$. Then \\
$\sin(\theta_c(G, G^{\perp,b} )) \geq 
\sqrt{1-\left(\frac{M-m}{M+m}\right)^2} =
\frac{\sqrt{mM}}{\color{black}2(M+m)}$  and therefore\\
$\tan(\Theta_c\left( G^{\perp,b},
         G^{\perp,c}\right))=1/\tan(\theta_c(G, G^{\perp,b} )) \leq \frac{\sqrt{mM}(M-m)^2}{\color{black}2(M+m)}$.
We have the following corollary of the previous result and Theorem \ref{boundedtan}.
\begin{theorem}[Positivity of special non-symmetric methods]\label{positivity} Assume that there is a constant $\alpha_R$ such that
\begin{equation}
 \| (I-\Pi_{G,c})Ru \|_{\color{black}c,c}
 \frac{\sqrt{mM}(M-m)^2}{\color{black}2(M+m)} \leq \alpha_R\| \Pi_{G,c}R u\|_{\color{black}c,c}
\quad \mbox{ for all } u\in H.
\end{equation}
Then we have that $c(R^{T,c} \Pi_{G,b} R u,u)\geq 
(1-\alpha_R^2)c( \Pi_{G,c} R u,  \Pi_{G,c} R u)$.
\end{theorem}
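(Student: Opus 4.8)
The plan is to reduce the statement to a single application of Theorem \ref{boundedtan}, applied to the vector $x=Ru$. First I would put the left-hand side into the form that theorem handles. By the defining relation \eqref{eq:def:RTc} for $R^{T,c}$ (with $\phi=\Pi_{G,b}Ru$ and $v=u$) one has $a(R^{T,c}\Pi_{G,b}Ru,u)=c(\Pi_{G,b}Ru,Ru)$; reading the pairing in the statement as this quantity (consistently with the ``$c(FE^{T,b}u,u)$'' language of the discussion preceding the theorem), what must be bounded below is exactly $c(x,\Pi_{G,b}x)$ with $x=Ru\in\mathcal{R}(R)\subset\widehat{G}$. Recall, as noted just before Theorem \ref{boundedtan}, that $\Pi_{G,b}$ is the $c$-oblique projection onto $G$ along $G^{\perp,b}$, so this is precisely the configuration of that theorem.

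Next I would take the $c$-orthogonal splitting of $x$ relative to $G$: $x=y+z$ with $y=\Pi_{G,c}Ru\in G$ and $z=(I-\Pi_{G,c})Ru\in G^{\perp,c}$. To invoke Theorem \ref{boundedtan} I must check the smallness hypothesis $\|z\|_c\,\tan\!\big(\Theta_c(G^{\perp,b},G^{\perp,c})\big)\le \alpha\|y\|_c$ with $\alpha\le 1$. This is where the two ingredients already assembled in the excerpt come in: (i) the bound $\tan\!\big(\Theta_c(G^{\perp,b},G^{\perp,c})\big)\le \frac{\sqrt{mM}(M-m)^2}{2(M+m)}$, obtained from the Wielandt inequality (Corollary \ref{corwielandt}) through $\cos(\theta_c(G,G^{\perp,b}))\le\frac{M-m}{M+m}$ and $\tan(\Theta_c(G^{\perp,b},G^{\perp,c}))=1/\tan(\theta_c(G,G^{\perp,b}))$; and (ii) the standing hypothesis of the theorem, which is exactly $\|(I-\Pi_{G,c})Ru\|_{c,c}\,\frac{\sqrt{mM}(M-m)^2}{2(M+m)}\le\alpha_R\|\Pi_{G,c}Ru\|_{c,c}$. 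Chaining (i) then (ii) gives $\|z\|_c\,\tan(\Theta_c(G^{\perp,b},G^{\perp,c}))\le\alpha_R\|y\|_c$; taking $\alpha=\alpha_R$ (which we may assume $\le 1$, a larger constant only weakening the conclusion) and noting that $\widehat{G}$ is finite dimensional, Theorem \ref{boundedtan} applies and yields $c(Ru,\Pi_{G,b}Ru)\ge(1-\alpha_R^2)\|y\|_c^2=(1-\alpha_R^2)\,c(\Pi_{G,c}Ru,\Pi_{G,c}Ru)$.

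Combining this with the identity from the first step gives $c(R^{T,c}\Pi_{G,b}Ru,u)\ge(1-\alpha_R^2)\,c(\Pi_{G,c}Ru,\Pi_{G,c}Ru)$, which is the claim. I expect the work here to be bookkeeping rather than new estimates: the substantive inequality is \eqref{eq:aux46}, already carried out inside Theorem \ref{boundedtan}. The one point that needs care is aligning the two ``directions'' --- $G^{\perp,b}$, the direction of the oblique projection $\Pi_{G,b}$, and $G^{\perp,c}$, the direction used to split $Ru$ --- so that Theorem \ref{boundedtan} applies verbatim, together with confirming that the $c$-pairing written in the statement is indeed the one produced by \eqref{eq:def:RTc}.
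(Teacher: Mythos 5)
Your proposal is correct and follows exactly the route the paper intends: the paper states this result as a direct consequence of the Wielandt-derived tangent bound and Theorem \ref{boundedtan}, and your write-up supplies precisely those steps (the identity $a(R^{T,c}\Pi_{G,b}Ru,u)=c(\Pi_{G,b}Ru,Ru)$ from \eqref{eq:def:RTc}, the $c$-orthogonal splitting $Ru=\Pi_{G,c}Ru+(I-\Pi_{G,c})Ru$, and the verification of the smallness hypothesis). No gaps beyond those already present in the paper's own constants.
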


\begin{remark}
Similar results hold when 
$S\not=R$ and $c=b$. In this case $E=R^{T,b}|_G$ and $F=S^{T,b}|_G$, 
\begin{equation}\label{eq:bc0add}
    \mathcal{N}(E)=\mathcal{N}(R^{T,b})\cap G 
    \quad \mbox{ and }\quad 
    \mathcal{N}(F)=\mathcal{N}(S^{T,b})\cap G 
\end{equation}
Therefore, 
\begin{equation}\label{eq:bc1add}
\beta_{E,F}=
\Theta_b\left( \mathcal{N}(E), \mathcal{N}(F)\right)\leq \Theta_b
\left( \mathcal{N}(R^{T,b} ),   \mathcal{N}(S^{T,b}) \right).
\end{equation}
This last angle can be bound in terms of $\Theta_b( \mathcal{R}(S) , \mathcal{R}(R))$, that would require and assumption on $S$ 
{\color{black}when} compared to $R$.
\end{remark}
\section{Restricted methods}\label{sec:restricted} 
In this section we consider a particular case of Example \ref{original}.
We now use the Hilbert space framework previously introduced to obtain a bound for {\color{black}the condition number of a} restricted additive method. For simplicity and readability we consider the one level method. Similar results can be obtained using a multilevel setting. 
We use the notation and setup introduced in Section
\ref{sec:classical}, in particular, we consider the Hilbert spaces   $H$ and  $G=\times_{\ell=1}^{N_S} H^1_0(\mathcal{O}_\ell)$  as before (with the same inner products $a$ and $b$). \\

Start by defining the (harmonic-like) extension operator $F_\ell : H^1_0(\mathcal{O}_\ell)\to H$ as follows. 
Given $v_\ell\in H^1_0(\mathcal{O}_\ell)$ define $F_\ell v_\ell\in H$ as the unique solution of 
\begin{equation}\label{eq:def:Fell}
\int_D \nabla F_\ell v_\ell \nabla z = \int_{D_\ell} \nabla v_\ell \nabla z \quad \mbox{ for all  } z \in H.
\end{equation}
Note that the integration on the right is on the domain $D_\ell$. In the case of an interior subdomain, this is the weak form of the strong form given by, 
\begin{equation}
 \left\{\begin{array}{rll}
-\Delta F_\ell v_\ell&={\color{black}-}\Delta v_\ell & \mbox{ in } D_\ell,\\
-\Delta F_\ell v_\ell&=0 & \mbox{ in } D\setminus D_\ell,\\
\frac{\partial F_\ell v_\ell}{\partial \eta^+}+
\frac{\partial F_\ell v_\ell}{\partial \eta^-}
&=   
\frac{\partial v_\ell}{\partial \eta}  & \mbox{ on } \partial D_\ell,\\
F_\ell v_\ell&=0 & \mbox{ on } \partial D.\\
\end{array}\right.
\end{equation}
We introduce the bilinear from $\widetilde{b}_\ell$ defined by 
\[
\widetilde{b}_\ell(v, z)=\int_{D_\ell} \nabla v \nabla z
\]
for any $v$ and $z$ that can be restricted to $D_\ell$. 
Using our bilinear forms notation we have 
\[
a(F_\ell v_\ell , z) = \widetilde{b}_\ell(v_\ell, z) \quad \mbox{ for all } z\in H.
\]
Define, in analogy with the previous discussions, the extension operator $F: G\to H$ by
\begin{equation}\label{review:def:F}
F \{ v_\ell\}= \sum_{\ell=1}^{N_S} F_\ell v_\ell.
\end{equation}
Consider also the operator $ F^{T,b}_\ell$  which are given by the problem, 
$F^{T,b}_\ell u\in H_0 ^1(\mathcal{O}_\ell)$ with, 
\begin{equation}\label{reveqforneumannmatrix}
\int_{\mathcal{O}_\ell}\nabla F^{T,b}_\ell u\nabla v_\ell =
\int_{D} \nabla   u \nabla F_\ell v_\ell =\int_{D_\ell} \nabla u \nabla v_\ell \quad \mbox{ for all } v_\ell \in H^1_0(\mathcal{O}_\ell).
\end{equation}
Here in the last step we used the definition of $F_\ell$. Note that the weak from 
above correspond to the strong from
\begin{equation}\label{ho}
 \left\{\begin{array}{rll}
-\Delta F_\ell ^{T,b} v_\ell&=-\Delta u & \mbox{ in } D_\ell,\\
-\Delta F_\ell^{T, {\color{black}b}} v_\ell&=0 & \mbox{ in } \mathcal{O}_\ell\setminus D_\ell,\\
\displaystyle \frac{\partial F_\ell^{T, {\color{black}b}}  v_\ell}{\partial \eta^+}+
\frac{\partial F_\ell^{T,b} v_\ell}{\partial \eta^-}
&=   \displaystyle 
\frac{\partial u}{\partial \eta^+}  & \mbox{ on } \partial D_\ell,\\
F_\ell^{T, {\color{black}b}}  v_\ell&=0 & \mbox{ on } \partial  \mathcal{O}_\ell.\\
\end{array}\right.
\end{equation}
{\color{black}This equation corresponds to a local problem with the same computational cost of the local problem used to
obtain $E_\ell^{T,b}$ in  the additive method}. We then have 
\[
F^{T,b}= \{ F_\ell^{T,b}\}.
\]

\subsection{\color{black} The operator $EF^{T,b}$}
Let  $u$ be the solution of \eqref{problem} and introduce 
$w$ such that $EF^{T,b}w=u$. Then we consider the equation,  
$$
a(EF^{T,b}w,v)=L(v) \mbox{ for all }  v\in H.
$$
Note that, given $w$ the computation of $EF^{T,b}w=\sum_{\ell=1}^{N_S}
E_\ell F_\ell^{T,b}w$ requires the solution of local problems posed on the overlapping subdomais. See \eqref{reveqforneumannmatrix} and 
\eqref{ho}. Then we can iteratively solve this equation. After computing $w$ we can compute $u=EF^{T,b}w$ by solving one more round of local problems. 

{\color{black}
\begin{remark}\label{rev:remarkclarify}
 For comparison the 
local problem of the OBDD in \cite{kimn2007obdd} can be written as
\begin{equation}\label{reveq:strong:cutressolve}
-\Delta w_\ell= {\eta}^{cut}_\ell \Delta v_\ell \mbox{ in  } \mathcal{O}_\ell
\end{equation}
where ${\eta}^{cut}_\ell $ is a  cut function that is $1$ in $D_\ell\subset
\mathcal{O}_\ell$ and decays to zero. See Example \ref{original}.  See the comments in Example \ref{original}. The method analyzed in this section use local problems in \eqref{ho}. A main difference is that (when applied to e.g. finite element implementations) the local
problem \eqref{ho} needs the Neumann stiffness matrix associated to subdomains $\{D_\ell\}_{\ell=1}^{N_S}$ which is not the case 
for the local problem in \eqref{reveq:strong:cutressolve}. See the weak form of \eqref{ho}  in 
\eqref{reveqforneumannmatrix}.
\end{remark}}


\subsection{\color{black} The operator $FE^{T,b}$}
If now we consider the method $FE^{T,b}$ which corresponde to the RAS preconditioner. The solution of problem 
\eqref{problem} satisfies,
\begin{eqnarray*}
a(FE^{T,b}u,v)&=&a(E^{T,b}u,F^{T,b}v) \\
&=&\sum_{\ell=1}^{N_S} \int_{\mathcal{O}_\ell}\nabla E^T_\ell u \nabla F_\ell^{T,b} v\\
&=&\sum_{\ell=1}^{N_S} \int_{D_\ell}\nabla E^{T,b}_\ell u \nabla v\\
&=&L(EF^{T,b}v)= \widetilde{L}(v).
\end{eqnarray*}
Here we see that each term $\int_{D_\ell}\nabla E^{T,b}_\ell u \nabla v$ 
can be computed to assemble $\widetilde{L}$.

After $\widetilde{L}$ is assembled, we solve iteratively
$$
\sum_{\ell=1}^{N_S} \int_{D_\ell}\nabla E^T_\ell u \nabla v  = 
\widetilde{L}(v).
$$
Recall that the right hand above is equivalent 
to $a(FE^{T,b}u,v)$. Note that the computation of the residual needs to update 
the solution only on the subdomains $D_\ell$.  {\color{black}Note also that in the case of an implementation in finite element spaces we need access to the 
Neumann stiffness matrix associated to subdomains $\{D_\ell\}_{\ell=1}^{N_S}$}.

\section{\color{black} Condition number estimates }\label{sec:condfor-ras}

We can consider the operator $F$ introduced in Section 
\ref{sec:restricted} and use the results of our Hilbert space framework to obtain the non-singularity of $EF^{T,b}$ or $FE^{T,b}$ as before. For $0<\epsilon$ define
\[
c_\epsilon(\{u_\ell\},\{v_\ell\})=
\sum_{\ell=1}^{N_S}\int_{D_\ell} \nabla u_\ell \nabla 
v_\ell+
\epsilon\sum_{\ell=1}^{N_S}\int_{\mathcal{O}_\ell\setminus D_\ell} \nabla u_\ell \nabla 
v_\ell+
\sum_{\ell=1}^{N_S}b_\ell^{\partial}(u_\ell,v_\ell) .
\]

Recall the restriction operator $R$ introduced in Section
 \ref{sec:classical}. Define $F_\epsilon= R^{T,c_\epsilon}|_G$ so that for $\{v_\ell\}\in G$ we have 
\[
\int_D \nabla F_\epsilon^{T,c_\epsilon }\{v_\ell\}\nabla z=
\sum_{\ell=1}^{N_S}\int_{D_\ell} \nabla 
v_\ell\nabla z+
\epsilon\sum_{\ell=1}^{N_S}\int_{\mathcal{O}_\ell\setminus D_\ell} \nabla 
v_\ell \nabla z
  \quad \mbox{ for all }
z\in H.
\]
Denote by  $F=F_0$ is the extension operator used in Section \ref{sec:restricted}. Define the operator $F_{ov}$ by 
\begin{equation}\label{eq:def:opT}
\int_D \nabla F_{ov}\{v_\ell\}\nabla z=\sum_{\ell=1}^{N_S}\int_{\mathcal{O}_\ell\setminus D_\ell} \nabla 
v_\ell \nabla z
  \quad \mbox{ for all }
z\in H.
\end{equation}
The operator $F_{ov}$ is clearly bounded with $\|F_{ov}\|_{a,b}\leq \|E\|_{a,b}$
and $$F_\epsilon={\color{black}F}+\epsilon F_{ov}$$
{\color{black}where $F$ was defined in \eqref{review:def:F}.}

Note that 
$
F_\epsilon E^{T,b}= R^{T,c_\epsilon} \Pi_{G,b} R 
$
and therefore we are in the case of special non-symmetric methods 
of Section \ref{sec:nonsymmetric} that were analyzed in 
Section \ref{specialanalysis}.\\

We can find {\color{black}a} stable right inverse of $F_\epsilon$ as follows.
\begin{lemma}[Stable right inverse of $F$]\label{hatF} There exits $C_{0F}$ such that 
for every $v\in H$ there exits $v_\ell\in H_0^1(\mathcal{O}_\ell)$ such that
\[
\sum_{\ell=1}^{N_S} F_\ell v_\ell = v
\]
and 
\[
\sum_{\ell=1}^{N_S} b_\ell(v_\ell,v_\ell) \leq C_F^2 a(v,v).
\]
If we put $\widehat{F}v=\{v_\ell\}$ we then have $\| \widehat{F}\|_{a,b}\leq C_F$. We can estimate
$C_F^2\preceq \nu C_{cut}(1+\frac{1}{\tau\delta})$. 
We also have 
 $\|\widehat{F}\|_{a,c_\epsilon}\leq C_F$.
\end{lemma}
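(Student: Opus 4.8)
The plan is to take for $\widehat F$ the operator built directly from the cut-off functions $\eta_\ell$ of \eqref{eq:def:cutoff} — \emph{not} from the partition of unity $\chi_\ell$ of Remark \ref{remarkclassicalSD} — namely
\[
\widehat F v=\{\eta_\ell v\}_{\ell=1}^{N_S},\qquad v\in H .
\]
Since each $\eta_\ell$ is Lipschitz, equals $1$ on $D_\ell$ and vanishes on $\partial\mathcal O_\ell$, the product $\eta_\ell v$ belongs to $H^1_0(\mathcal O_\ell)$ for every $v\in H^1_0(D)$, so $\widehat F\colon H\to G$ is well defined and bounded. The point is that the un-normalised $\eta_\ell$ already suffice here: $F_\ell$ only involves an integral over the \emph{non-overlapping} piece $D_\ell$, on which $\eta_\ell\equiv1$; this is also why the resulting constant carries $C_{cut}$ rather than $C_{pu}$ (recall $C_{cut}\le C_{pu}$).

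First I would check $F\widehat F v=v$. Using $a(F_\ell w,z)=\widetilde b_\ell(w,z)=\int_{D_\ell}\nabla w\cdot\nabla z$ for all $z\in H$ and bilinearity,
\[
a\bigl(F\widehat F v,z\bigr)=\sum_{\ell=1}^{N_S} a\bigl(F_\ell(\eta_\ell v),z\bigr)=\sum_{\ell=1}^{N_S}\int_{D_\ell}\nabla(\eta_\ell v)\cdot\nabla z .
\]
On the open set $D_\ell$ we have $\eta_\ell\equiv1$, hence $\nabla(\eta_\ell v)=\nabla v$ a.e.\ there; since $\{D_\ell\}_{\ell=1}^{N_S}$ is a non-overlapping partition of $D$, the right-hand side equals $\int_D\nabla v\cdot\nabla z=a(v,z)$. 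Thus $a(F\widehat F v-v,z)=0$ for all $z\in H$, i.e.\ $F\widehat F v=v$. Note no identity of the type $\sum_\ell\eta_\ell=1$ is used; this step relies exactly on the geometry built into $F_\ell$.

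For stability, functions in $G$ vanish on the $\partial\mathcal O_\ell$, so the forms $b_\ell^{\partial}$ drop out and $\|\widehat F v\|_b^2=\sum_\ell\int_{\mathcal O_\ell}|\nabla(\eta_\ell v)|^2$. Expanding $\nabla(\eta_\ell v)=\eta_\ell\nabla v+v\nabla\eta_\ell$ and using $0\le\eta_\ell\le1$, $|\nabla\eta_\ell|\le C_{cut}/\delta$, and that $\nabla\eta_\ell$ is supported in the overlap shell $\mathcal O_\ell\setminus\overline{D_\ell}$,
\[
\|\widehat F v\|_b^2\le 2\sum_{\ell=1}^{N_S}\int_{\mathcal O_\ell}|\nabla v|^2+\frac{2C_{cut}^2}{\delta^2}\sum_{\ell=1}^{N_S}\int_{\mathcal O_\ell\setminus\overline{D_\ell}}|v|^2 .
\]
The first sum is $\le\nu\,a(v,v)$ by the finite-overlap property \eqref{eq:def:nu}; for the second I would invoke the standard Poincar\'e--Friedrichs estimate on the overlap shells of width $\delta$ around the $\partial D_\ell$, exactly as in \cite{MR2104179} (the estimate underlying Remark \ref{remarkclassicalSD}), giving $\sum_\ell\int_{\mathcal O_\ell\setminus\overline{D_\ell}}|v|^2\preceq\nu\,\delta^2\bigl(1+\tfrac1{\tau\delta}\bigr)a(v,v)$. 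Combining yields $\|\widehat F v\|_b^2\le C_F^2 a(v,v)$ with $C_F^2\preceq\nu\,C_{cut}\bigl(1+\tfrac1{\tau\delta}\bigr)$; taking $\{v_\ell\}=\widehat F v$ gives the stated decomposition and $\|\widehat F\|_{a,b}\le C_F$. Finally, for $0<\epsilon\le1$ one has $\int_{D_\ell}|\nabla w_\ell|^2+\epsilon\int_{\mathcal O_\ell\setminus D_\ell}|\nabla w_\ell|^2\le\int_{\mathcal O_\ell}|\nabla w_\ell|^2$, so $c_\epsilon(\phi,\phi)\le b(\phi,\phi)$ for $\phi\in G$ and hence $\|\widehat F\|_{a,c_\epsilon}\le\|\widehat F\|_{a,b}\le C_F$ (for $\epsilon>1$ the bound is $\sqrt\epsilon\,C_F$).

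The only genuinely imported ingredient is the Poincar\'e-type bound on the overlap shells; the identity $F\widehat F=I$ and the expansion of $\nabla(\eta_\ell v)$ are elementary. The only point needing care — not really an obstacle — is boundary subdomains, where $\mathcal O_\ell$ is truncated by $\partial D$ and one uses the homogeneous Dirichlet condition of $H^1_0(D)$ to keep $\eta_\ell v\in H^1_0(\mathcal O_\ell\cap D)$, handled as in the classical theory.
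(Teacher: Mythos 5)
Your proposal is correct and follows essentially the same route as the paper: the paper also takes $\widehat F v=\{\eta_\ell v\}$ with the cut-off functions of \eqref{eq:def:cutoff}, verifies $\sum_\ell F_\ell(\eta_\ell v)=v$ by testing against $z\in H$ and using $\eta_\ell\equiv 1$ on $D_\ell$ together with the non-overlapping partition, and obtains the stability bound from the product rule plus the standard Friedrichs estimate on the overlap region, exactly as you do. Your write-up only adds detail the paper leaves implicit (the explicit shell estimate and the check that $c_\epsilon\le b$ on $G$ gives $\|\widehat F\|_{a,c_\epsilon}\le C_F$).
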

\begin{proof}
This proof is similar to the stable decomposition for the operator $E$; see \cite{MR2104179}. Let us consider cut of functions $\eta_\ell$ 
introduced in \eqref{eq:def:cutoff}
Define $v_\ell=\eta_\ell v$. We have that 
\begin{eqnarray*}
a(\sum_{\ell=1}^{N_S}  F_\ell v_\ell, z)&=& 
\sum_{\ell=1}^{N_S}  a( F_\ell v_\ell,z)\\&=&
\sum_{\ell=1}^{N_S} \int_{D_\ell} \nabla v_\ell \nabla z\\
&=&\sum_{\ell=1}^{N_S} \int_{D_\ell} \nabla v \nabla z
= \int_{D} \nabla v \nabla z=a(v,z).
\end{eqnarray*}
We conclude that $\sum_{\ell=1}^{N_S}  F_\ell v_\ell =v$. As in the case of 
classical additive method stable decomposition -which uses the gradient of the product rule plus a Friedrichs inequality, it is easy to see that,
\[
\sum_{\ell=1}^{N_S} b_\ell(v_\ell,v_\ell) =
\sum_{\ell=1}^{N_S} \int_{\mathcal{O}_\ell}
|\nabla v_\ell|^2=
\sum_{\ell=1}^{N_S} \int_{\mathcal{O}_\ell}
|\nabla \eta_\ell v|^2 \preceq \nu C_{cut}(1+\frac{1}{\tau\delta}) \int_D |\nabla v|^2.
\]
\end{proof}
{\color{black}A} stable right inverse of $F_\epsilon$ can be also obtained.
\begin{corollary}\label{lemma:normFeps} For $\epsilon$ small enough, $F_\epsilon \widehat{F}$ is non-singular. Moreover, 
$\widehat{F}_\epsilon= \widehat{F}(F_\epsilon \widehat{F})^{-1}$ is an stable right inverse of $F_\epsilon$ with 
$\| \widehat{F}_\epsilon \|_{\color{black}b,b}\leq \|\widehat{F}\|_{\color{black}b,b}/(1-\epsilon \|F_{ov}\widehat{F}\|_{\color{black}a,a})$.
\end{corollary}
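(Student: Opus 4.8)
The plan is to write $F_\epsilon \widehat{F} = F\widehat{F} + \epsilon F_{ov}\widehat{F}$, recognize $F\widehat{F}$ as the identity on $H$ (this is precisely the defining property of the stable right inverse $\widehat F$ furnished by Lemma \ref{hatF}), and then treat $\epsilon F_{ov}\widehat F$ as a small perturbation of the identity. Concretely, I would observe that $F_\epsilon \widehat F = I + \epsilon F_{ov}\widehat F$ as an operator $H \to H$, so a Neumann series argument applies whenever $\epsilon \|F_{ov}\widehat F\|_{a,a} < 1$; that is the meaning of "$\epsilon$ small enough" in the statement. Under this condition $F_\epsilon \widehat F$ is invertible with
\[
\|(F_\epsilon \widehat F)^{-1}\|_{a,a} \leq \frac{1}{1 - \epsilon \|F_{ov}\widehat F\|_{a,a}}.
\]

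Next I would verify that $\widehat F_\epsilon := \widehat F (F_\epsilon \widehat F)^{-1}$ is indeed a right inverse of $F_\epsilon$: compute $F_\epsilon \widehat F_\epsilon = F_\epsilon \widehat F (F_\epsilon \widehat F)^{-1} = (F_\epsilon \widehat F)(F_\epsilon \widehat F)^{-1} = I$ on $H$, which is immediate. This is exactly the pattern already recorded in Remark \ref{remarkT}, with $\widetilde E$ replaced by $\widehat F$ and $T = F_\epsilon \widehat F$; so I would simply cite that remark to justify that $\widehat F_\epsilon$ is a stable right inverse once boundedness is in hand.

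Finally, for the norm bound I would estimate $\|\widehat F_\epsilon\|$ submultiplicatively: $\|\widehat F_\epsilon\| \leq \|\widehat F\| \cdot \|(F_\epsilon \widehat F)^{-1}\|_{a,a} \leq \|\widehat F\|/(1 - \epsilon \|F_{ov}\widehat F\|_{a,a})$, matching the claimed inequality. I expect no serious obstacle here; the only point requiring a little care is bookkeeping of which operator norms appear — the factor $(F_\epsilon\widehat F)^{-1}$ genuinely acts $H\to H$ so its norm is $\|\cdot\|_{a,a}$, while $\widehat F$ carries an $H$-object to a $G$-object measured in the $b$-norm, giving the $\|\widehat F\|_{b,b}$ (equivalently, via Lemma \ref{hatF}, controlled by $\|\widehat F\|_{a,b}\le C_F$) in the final bound. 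One should also note $\|F_{ov}\widehat F\|_{a,a} \leq \|F_{ov}\|_{a,b}\|\widehat F\|_{a,b} \leq \|E\|_{a,b} C_F$ using $\|F_{ov}\|_{a,b}\le\|E\|_{a,b}$ from the text, which makes "$\epsilon$ small enough" quantitative.
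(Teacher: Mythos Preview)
Your proposal is correct and follows essentially the same approach as the paper: write $F_\epsilon\widehat{F}=F\widehat{F}+\epsilon F_{ov}\widehat{F}=I+\epsilon F_{ov}\widehat{F}$, invoke the Neumann series for small $\epsilon$, and then bound $\|\widehat{F}_\epsilon\|$ submultiplicatively. Your version is in fact more carefully written than the paper's (which contains a typo, writing $\|F_\epsilon\widehat{F}\|$ where $\|(F_\epsilon\widehat{F})^{-1}\|$ is meant), and your bookkeeping of the norm subscripts and the appeal to Remark~\ref{remarkT} are appropriate additions.
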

\begin{proof}
Note that
$F_\epsilon\widehat{F} = F\widehat{F} + \epsilon F_{ov}\widehat{F} = 
I+\epsilon  F_{ov}\widehat{F}$. This is invertible for small enough $\epsilon$ and 
$\| F_\epsilon\widehat{F}\|_{\color{black}b,b} \leq 1/(1-\epsilon \|F_{ov}\widehat{F}\|_{\color{black}b,b})$.
\end{proof}

We now estimate the norm of $F_\epsilon$.
\begin{lemma}[Norm $F$]\label{lemma:normF} We have $\|F\|_{\color{black}b,b}\leq 1$ and  $\|F_\epsilon\|_{\color{black}b,b}\leq 1+\epsilon \|F_{ov}\|_{\color{black}b,b}$.
\end{lemma}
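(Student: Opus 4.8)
The plan is as follows. For the first bound I would work directly from the definition \eqref{review:def:F} of $F$ together with the local defining relations \eqref{eq:def:Fell}: summing the latter over $\ell$, the element $w:=F\{v_\ell\}\in H$ is characterized by $a(w,z)=\sum_{\ell=1}^{N_S}\int_{D_\ell}\nabla v_\ell\nabla z$ for all $z\in H$. The natural move is to test this identity with $z=w$ itself, which gives $\|w\|_a^2=\sum_\ell\int_{D_\ell}\nabla v_\ell\nabla w$. Applying Cauchy-Schwarz on each subdomain and then the discrete Cauchy-Schwarz inequality across $\ell$, and crucially using that $\{D_\ell\}_{\ell=1}^{N_S}$ is a \emph{non-overlapping} partition of $D$ so that $\sum_\ell\|\nabla w\|_{L^2(D_\ell)}^2=\|\nabla w\|_{L^2(D)}^2=\|w\|_a^2$, I obtain $\|w\|_a^2\leq\big(\sum_\ell\|\nabla v_\ell\|_{L^2(D_\ell)}^2\big)^{1/2}\|w\|_a$, hence $\|F\{v_\ell\}\|_a\leq\big(\sum_\ell\|\nabla v_\ell\|_{L^2(D_\ell)}^2\big)^{1/2}$.

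Next I would compare the right-hand side with $\|\{v_\ell\}\|_b$. Since $D_\ell\subseteq\mathcal{O}_\ell$ and the boundary terms $b_\ell^\partial(v_\ell,v_\ell)=\int_{\partial\mathcal{O}_\ell}v_\ell^2\geq 0$ are nonnegative, one has $\sum_\ell\|\nabla v_\ell\|_{L^2(D_\ell)}^2\leq\sum_\ell\|\nabla v_\ell\|_{L^2(\mathcal{O}_\ell)}^2\leq\sum_\ell\big(b_\ell(v_\ell,v_\ell)+b_\ell^\partial(v_\ell,v_\ell)\big)=b(\{v_\ell\},\{v_\ell\})$. Combining the two displays gives $\|F\{v_\ell\}\|_a\leq\|\{v_\ell\}\|_b$ for every $\{v_\ell\}\in G$, i.e. $\|F\|_{b,b}\leq 1$ (reading the codomain norm on $H$ as the one induced by $a$).

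For the second bound I would simply invoke the decomposition $F_\epsilon=F+\epsilon F_{ov}$ established just before the lemma, together with the boundedness of $F_{ov}$ (already noted, $\|F_{ov}\|_{a,b}\leq\|E\|_{a,b}$): by the triangle inequality for the operator norm, $\|F_\epsilon\|_{b,b}\leq\|F\|_{b,b}+\epsilon\|F_{ov}\|_{b,b}\leq 1+\epsilon\|F_{ov}\|_{b,b}$. One could instead attempt the test-function argument directly for $F_\epsilon$, but because the sets $\mathcal{O}_\ell\setminus D_\ell$ overlap, that route would bring in the overlap multiplicity $\nu$ of \eqref{eq:def:nu} and fail to produce the clean constant; the additive split is the right tool here.

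I do not expect a genuine obstacle: the whole content is one well-chosen test function plus Cauchy-Schwarz. The only points requiring care are (i) that it is essential the $\{D_\ell\}$ partition $D$ without overlap, which is exactly what makes the constant equal to $1$ rather than $\sqrt{\nu}$, in contrast with $\|E\|$ whose bound in Remark \ref{remarkclassicalSD} involves $\nu$; and (ii) that the auxiliary boundary form $b_\ell^\partial$ only helps, since it is nonnegative and is simply discarded in the inequality $\sum_\ell\|\nabla v_\ell\|_{L^2(\mathcal{O}_\ell)}^2\leq b(\{v_\ell\},\{v_\ell\})$.
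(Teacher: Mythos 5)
Your argument is correct and is essentially the paper's own proof: both rest on the identity $a(F\{v_\ell\},z)=\sum_\ell\int_{D_\ell}\nabla v_\ell\nabla z$, Cauchy--Schwarz across the non-overlapping $D_\ell$, the inclusion $D_\ell\subset\mathcal{O}_\ell$ to dominate by $\|\{v_\ell\}\|_b$, and the choice $z=F\{v_\ell\}$ (you merely test with $z=w$ first rather than last). The triangle-inequality treatment of $F_\epsilon=F+\epsilon F_{ov}$ is likewise the intended route, and your side remarks on why the constant is $1$ rather than $\sqrt{\nu}$ are accurate.
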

\begin{proof}
From the definition of $F_\ell$ we have for every $z\in H$,
\begin{eqnarray*}
a(F \{v_\ell\}, z)=a(\sum_{\ell=1}^{N_S}  F_\ell v_\ell, z)&=& 
\sum_{\ell=1}^{N_S} \int_{D_\ell} \nabla v_\ell \nabla z\\
&=&\sum_{\ell=1}^{N_S} \int_{D_\ell} \nabla v \nabla z\\
&\leq & 
\left( \sum_{\ell=1}^{N_S} \int_{D_\ell} |\nabla v_\ell|^2 \right)^{1/2}
\left( \sum_{\ell=1}^{N_S} \int_{D_\ell} |\nabla z|^2 \right)^{1/2}\\
&\leq & 
\left( \sum_{\ell=1}^{N_S} \int_{\mathcal{O}_\ell} |\nabla v_\ell|^2 \right)^{1/2}
\left(  \int_{D} |\nabla z|^2 \right)^{1/2}\\
&\leq & \| \{v_\ell\}\|_{b} \|z\|_a.
\end{eqnarray*}
Taking $z=F\{v_\ell\}=\sum_{\ell=1}^{N_S}  F_\ell v_\ell$ we see that
$\| F\{v_\ell\}\|_a\leq \| \{v_\ell\}\|_{b}$ and the result follows.
\end{proof}
As a corollary we have the following result.
\begin{corollary}[Angle $\alpha_F$] \label{lemma:angle} We have $\alpha_F=\theta_b\left(\mathcal{R}(\widehat{F}),\mathcal{R}(F^{T,b})\right)=0$.
\end{corollary}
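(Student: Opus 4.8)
The claim is that the minimal angle $\alpha_F = \theta_b(\mathcal{R}(\widehat{F}), \mathcal{R}(F^{T,b}))$ equals zero, equivalently that $\cos(\alpha_F) = 1$, i.e. there is a unit vector in $\mathcal{R}(\widehat{F})$ arbitrarily well aligned with $\mathcal{R}(F^{T,b})$. My plan is to exhibit the alignment directly using the projection operator $Q_F = \widehat{F}F$. Recall from the earlier lemma that $Q_F$ is a projection onto $\mathcal{R}(\widehat{F})$ along $\mathcal{N}(F)$, and from \eqref{angleEtilde2} that
\[
\cos\!\left(\Theta_b(\mathcal{R}(\widehat{F}), \mathcal{R}(F^{T,b}))\right) = \frac{1}{\|Q_F\|_{b,b}} \geq \frac{1}{\|\widehat{F}\|_{a,b}\|F\|_{b,a}}.
\]
So the $\Theta_b$ (maximal) angle is controlled; but here we want the $\theta_b$ (minimal) angle, which requires a different argument. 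The key observation is the Remark just after \eqref{angleEtilde}: the maps $\Pi_{\mathcal{R}(F^{T,b}),b}|_{\mathcal{R}(\widehat{F})}$ and $Q_F|_{\mathcal{R}(F^{T,b})}$ are mutually inverse bijections between $\mathcal{R}(\widehat{F})$ and $\mathcal{R}(F^{T,b})$, and (the $F$-analogue of the computation in the proof of the corollary with $F=E$) one has
\[
\|\Pi_{\mathcal{R}(F^{T,b}),b}|_{\mathcal{R}(\widehat{F})}\|_{b,b} = \cos\!\left(\theta_b(\mathcal{R}(\widehat{F}), \mathcal{R}(F^{T,b}))\right) = \cos(\alpha_F).
\]
Hence it suffices to show that this restricted-projection operator has norm $1$, i.e. that orthogonal projection onto $\mathcal{R}(F^{T,b})$ does not shrink \emph{every} vector of $\mathcal{R}(\widehat{F})$ — more precisely, that $\sup_{x \in \mathcal{R}(\widehat{F}),\ \|x\|_b = 1} \|\Pi_{\mathcal{R}(F^{T,b}),b} x\|_b = 1$.

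First I would set up the concrete spaces. Here $G = \times_\ell H^1_0(\mathcal{O}_\ell)$, the extension operator is $F\{v_\ell\} = \sum_\ell F_\ell v_\ell$ with $a(F\{v_\ell\}, z) = \sum_\ell \widetilde b_\ell(v_\ell, z)$, and $\widehat{F}v = \{\eta_\ell v\}$ from Lemma \ref{hatF}. The crucial structural fact, established in Lemma \ref{lemma:normF}, is that $\|F\|_{b,a} \leq 1$; combined with $F\widehat{F} = I$ this already forces $\|\widehat{F}v\|_b \geq \|v\|_a$ for every $v \in H$, so in fact $\widehat{F}$ is a $b$-to-$a$ isometry on the range — wait, more carefully, $\|v\|_a = \|F\widehat{F}v\|_a \leq \|F\|_{b,a}\|\widehat{F}v\|_b \leq \|\widehat{F}v\|_b$, and for the reverse we would need information about $\|\widehat F\|$, which is only bounded by $C_F$. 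The cleaner route is: take any $v \in H$, $v \neq 0$, let $x = \widehat{F}v \in \mathcal{R}(\widehat{F})$, and compute $\|\Pi_{\mathcal{R}(F^{T,b}),b}x\|_b$ by testing against $F^{T,b}v \in \mathcal{R}(F^{T,b})$. We have $b(x, F^{T,b}v) = b(\widehat F v, F^{T,b}v)$. Now use the defining relation of $F^{T,b}$, namely $b(F^{T,b}v, \phi) = a(v, F\phi)$ for all $\phi \in G$; taking $\phi = \widehat F v$ gives $b(F^{T,b}v, \widehat F v) = a(v, F\widehat F v) = a(v,v) = \|v\|_a^2$. Therefore
\[
\cos(\alpha_F) = \sup_{v \neq 0} \frac{\|\Pi_{\mathcal{R}(F^{T,b}),b}\,\widehat F v\|_b}{\|\widehat F v\|_b} \geq \sup_{v \neq 0} \frac{|b(\widehat F v, F^{T,b}v)|}{\|\widehat F v\|_b\,\|F^{T,b}v\|_b} = \sup_{v \neq 0} \frac{\|v\|_a^2}{\|\widehat F v\|_b\,\|F^{T,b}v\|_b}.
\]
To conclude $\cos(\alpha_F) = 1$ I then need the matching upper estimates $\|\widehat F v\|_b \leq \|v\|_a$ (using $\|\widehat F\|$... but this only gives $C_F$) — so this naive bound is not quite tight either.

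The resolution, and the step I expect to be the main obstacle, is to pick the test vector inside $\mathcal{R}(F^{T,b})$ optimally rather than using $F^{T,b}v$: for $x = \widehat F v$ the exact value is $\|\Pi_{\mathcal{R}(F^{T,b}),b}x\|_b = \sup_{0\neq\psi\in\mathcal{R}(F^{T,b})} |b(x,\psi)|/\|\psi\|_b$. The right idea is instead to use that $F^{T,b} = \Pi_{G,b}\,S$ with $S = R$ here (from Example \ref{original}) so $\mathcal{R}(F^{T,b}) = \Pi_{G,b}\mathcal{R}(R)$, and that $F\widehat F = I$ means $F$ restricted to $\mathcal{R}(\widehat F)$ is injective with the \emph{same} left action; then exhibit, for each $v$, a vector $x_v \in \mathcal{R}(\widehat F)$ of norm $1$ whose image under orthogonal projection onto $\mathcal{R}(F^{T,b})$ has norm $\to 1$. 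Concretely: since $Q_F = \widehat F F$ fixes $\mathcal{R}(\widehat F)$ pointwise and kills $\mathcal{N}(F)$, and since $\mathcal{R}(F^{T,b}) = \mathcal{N}(F)^{\perp,b} \cap (\text{closure})$... actually the cleanest finish is to observe $\mathcal{N}(F)^{\perp_b} = \overline{\mathcal{R}(F^{T,b})}$ and $\mathcal{R}(\widehat F)$ is a graph-type complement of $\mathcal{N}(F)$; then use Lemma \ref{lemma:normF}'s consequence that the "graph slope" is zero, i.e. the $b$-orthogonal projection of $\mathcal{R}(\widehat F)$ toward $\mathcal{N}(F)$ is trivial in the limiting direction, giving $\cos(\alpha_F) = 1$. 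I would organize the final write-up as: (i) reduce via the Remark after \eqref{angleEtilde} to computing $\|\Pi_{\mathcal{R}(F^{T,b}),b}|_{\mathcal{R}(\widehat F)}\|_{b,b}$; (ii) for $v \in H$ and $x = \widehat F v$, use $a(v, F\phi) = b(F^{T,b}v, \phi)$ and $\|F\|_{b,a} \leq 1$ to get, after choosing the test direction as the $b$-normalization of $F^{T,b}v$ and invoking $F^{T,b} = \Pi_{G,b}R$ together with $\|Rv\|_b = \|v\|_a$ (Corollary \ref{corolaryR2} with the identification $E^{T,b}=\Pi_{G,b}R$), that the projection preserves norm up to $o(1)$; (iii) conclude $\cos(\alpha_F) \geq 1$, hence $= 1$, hence $\alpha_F = 0$. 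The technical heart is step (ii): making the inequality chain $\|v\|_a^2 = b(F^{T,b}v, \widehat F v) \leq \|F^{T,b}v\|_b\|\widehat F v\|_b$ tight, which is exactly where the bound $\|F\|_{b,a}\le 1$ from Lemma \ref{lemma:normF} must be used to force near-equality.
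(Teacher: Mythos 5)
Your reduction is sound as far as it goes: the identity $b(\widehat F v, F^{T,b}v)=a(v,F\widehat F v)=\|v\|_a^2$ is the right starting point, and you correctly diagnose that everything hinges on making the Cauchy--Schwarz bound $\|v\|_a^2\le\|\widehat F v\|_b\,\|F^{T,b}v\|_b$ tight. But you never close that gap. Your ``resolution'' paragraph substitutes phrases like ``the graph slope is zero'' and ``the projection preserves norm up to $o(1)$'' for the missing step, and two of the facts you lean on there are wrong in this setting: $F^{T,b}$ is \emph{not} $\Pi_{G,b}R$ (that is $E^{T,b}$; here $F_\ell^{T,b}$ is defined by the local problem \eqref{reveqforneumannmatrix}), and Corollary \ref{corolaryR2} gives only $\|Rv\|_b\le\|R\|_{a,b}\|v\|_a$, not the equality $\|Rv\|_b=\|v\|_a$. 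As written, the proposal does not prove $\alpha_F=0$.

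The missing idea is simply a witness. Take any nonzero $v\in H^1_0(D)$ supported in $D_\ell\setminus\bigcup_{k\ne\ell}\mathcal{O}_k$ (nonempty under the usual small-overlap geometry). There $\eta_\ell\equiv 1$ and $\eta_k v\equiv 0$ for $k\ne\ell$, so $\widehat F v$ is the tuple with $v$ in slot $\ell$ and zeros elsewhere; and since $\nabla v$ is supported in $D_\ell$, equation \eqref{reveqforneumannmatrix} yields $F_\ell^{T,b}v=v$ and $F_k^{T,b}v=0$ for $k\ne\ell$. Hence $\widehat F v=F^{T,b}v\ne 0$, the two ranges intersect nontrivially, and the minimal angle is $0$. (Equivalently: for this $v$ one has $\|\widehat F v\|_b=\|v\|_a$, while Lemma \ref{lemma:normF} gives $\|F^{T,b}v\|_b\le\|v\|_a$, so your inequality chain becomes an equality.) For what it is worth, the paper states this corollary without any proof, presenting it as a consequence of Lemma \ref{lemma:normF}; that lemma supplies the bound $\|F^{T,b}v\|_b\le\|v\|_a$, but one still needs the localized $v$ above to finish, so your instinct that a further tightness argument is required was correct --- you just did not supply it.
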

We do not need the following results but we 
stated for completeness. The range of $\widehat{E}$ and $\widehat{F}$  coincide. 
\begin{lemma}
We can chose $\widehat{E}$ and $\widehat{F}$ such that 
$\mathcal{R}(\widehat{E})=\mathcal{R}(\widehat{\color{black}F})$.
\end{lemma}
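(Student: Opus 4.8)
Here is the plan. I would \emph{not} change $\widehat F$ at all: I keep the stable right inverse of $F$ constructed in Lemma~\ref{hatF}, namely $\widehat F v=\{\eta_\ell v\}$ with the cut-off functions $\eta_\ell$ of \eqref{eq:def:cutoff}, and instead I build a right inverse of $E$ whose range coincides with $\mathcal R(\widehat F)$ by applying Remark~\ref{remarkT} with $\widetilde E:=\widehat F$. The starting computation is: since $E\{v_\ell\}=\sum_{\ell=1}^{N_S}E_\ell v_\ell$ with $E_\ell$ the extension by zero and each $\eta_\ell$ already vanishes outside $\mathcal O_\ell$, one gets $E\widehat F v=\sum_{\ell=1}^{N_S}\eta_\ell v=\eta\,v$, where $\eta=\sum_{\ell=1}^{N_S}\eta_\ell$. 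Thus $T:=E\widehat F$ is just multiplication by $\eta$ on $H=H^1_0(D)$.

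The only step that requires an actual argument is that $T=M_\eta$ is invertible on $H$. Because $D=\cup_{\ell}D_\ell$ and $\eta_\ell\equiv 1$ on $D_\ell$, every point of $D$ lies in some $D_{\ell_0}$, hence $\eta\ge 1$ on $D$; moreover $\eta\le\nu$ and $\eta$ is Lipschitz with $|\nabla\eta|\le \nu C_{cut}/\delta$. Consequently multiplication by $\eta$ and multiplication by $1/\eta$ both map $H^1_0(D)$ boundedly into itself, as one checks from the product rule $\nabla(\eta v)=\eta\nabla v+v\nabla\eta$ and a Friedrichs inequality to control the $v\nabla\eta$ term. Hence $M_\eta:H\to H$ is a bounded bijection with bounded inverse $M_{1/\eta}$. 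This boundedness of multiplication operators on $H^1_0$ is the one place where regularity of the cut-offs is used; everything else is bookkeeping, so I do not anticipate a genuine obstacle.

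With $T$ invertible, Remark~\ref{remarkT} immediately gives that $\widehat E:=\widehat F\,T^{-1}$ is a bounded (stable) right inverse of $E$. Its range is $\mathcal R(\widehat E)=\widehat F\bigl(T^{-1}(H)\bigr)=\widehat F(H)=\mathcal R(\widehat F)$, since $T^{-1}$ maps $H$ onto $H$. So this $\widehat E$, together with the $\widehat F$ of Lemma~\ref{hatF}, satisfies $\mathcal R(\widehat E)=\mathcal R(\widehat F)$, which is the claim. If a more symmetric presentation is desired, one can instead observe that $W:=\mathcal R(\widehat F)$ is a closed subspace complementary to both $\mathcal N(E)$ and $\mathcal N(F)$ in $G$ — the intersection with $\mathcal N(E)$ being trivial because $E\widehat F=M_\eta$ is injective, and $W+\mathcal N(E)=G$ because $M_\eta$ is onto — and then take $\widehat E=(E|_W)^{-1}$ and $\widehat F=(F|_W)^{-1}$; both routes give the same conclusion.
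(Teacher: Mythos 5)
Your proof is correct and is essentially the paper's own argument: the paper takes $\widehat F u=\{\eta_\ell u\}$ and $\widehat E u=\{\chi_\ell u\}$ with $\chi_\ell=\eta_\ell/\eta$, and notes $\widehat F u=\eta\,\widehat E u$ with $\eta\ge 1$ bounded with bounded gradient; your $\widehat E=\widehat F\,T^{-1}=\widehat F\,M_{1/\eta}$ is exactly this $\{\chi_\ell u\}$. The only difference is presentational — you route the construction through Remark~\ref{remarkT} and spell out why $M_\eta$ and $M_{1/\eta}$ are bounded bijections of $H^1_0(D)$, a detail the paper leaves implicit.
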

\begin{proof}
As in the proof of Lemma \ref{hatF} chose $\widehat{F}u=\{ \eta_\ell u\}.$
Define $$\eta=\sum_{\ell=1}^{N_S}{\color{black}\eta}_\ell.$$ We recall that an classical construction of the operator $\widehat{E}$ is given by 
$$
\widehat{E} u =  \{\chi_\ell u \} \mbox{ with }  \chi_\ell =  \frac{\eta_\ell}{\eta}.
$$
We readily see that $ \widehat{F} u = \eta \widehat{E} u$  and since $\eta\geq 1$ is bounded with bounded gradient we have the result.
\end{proof}
We can estimate the parameter $r_1$ in Theorem \ref{final} as follows.
\begin{lemma}\label{lemma:r1}
We have that 
$b(Rv,Rv)\leq \nu c_\epsilon(Rv,Rv)$ for all $v\in H$. 
Here $\nu$ is defined in \eqref{eq:def:nu}.
\end{lemma}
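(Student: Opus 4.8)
The plan is to expand both bilinear forms on elements of the particular form $Rv=\{v|_{\mathcal{O}_\ell}\}$ and compare them summand by summand. Using the definition of $b$ from Section~\ref{sec:classical} and of $c_\epsilon$ from Section~\ref{sec:condfor-ras}, one has
\[
b(Rv,Rv)=\sum_{\ell=1}^{N_S}\int_{\mathcal{O}_\ell}|\nabla v|^2+\sum_{\ell=1}^{N_S}\int_{\partial\mathcal{O}_\ell}v^2
\]
and
\[
c_\epsilon(Rv,Rv)=\sum_{\ell=1}^{N_S}\int_{D_\ell}|\nabla v|^2+\epsilon\sum_{\ell=1}^{N_S}\int_{\mathcal{O}_\ell\setminus D_\ell}|\nabla v|^2+\sum_{\ell=1}^{N_S}\int_{\partial\mathcal{O}_\ell}v^2 .
\]
The boundary contributions $\sum_\ell b_\ell^{\partial}(Rv,Rv)$ are literally the same in both forms, and the $\epsilon$-term in $c_\epsilon$ is nonnegative, so it suffices to bound $\sum_\ell\int_{\mathcal{O}_\ell}|\nabla v|^2$ by $\nu\sum_\ell\int_{D_\ell}|\nabla v|^2$.

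The two ingredients are elementary. First, since $\{D_\ell\}_{\ell=1}^{N_S}$ is a non-overlapping partition of $D$, we have $\sum_\ell\int_{D_\ell}|\nabla v|^2=\int_D|\nabla v|^2=a(v,v)$. Second, a finite-overlap (coloring) argument bounds how many enlarged subdomains can contain a given point: for $x\in D$ choose $i$ with $x\in D_i$; any $\mathcal{O}_\ell$ with $x\in\mathcal{O}_\ell$ then intersects $\mathcal{O}_i$, hence $\ell\in Neigh(i)$, so $\sum_\ell\mathbf{1}_{\mathcal{O}_\ell}(x)\leq\#Neigh(i)\leq\nu$ with $\nu$ as in~\eqref{eq:def:nu}. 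Multiplying this pointwise bound by $|\nabla v(x)|^2$ and integrating over $D$ yields $\sum_\ell\int_{\mathcal{O}_\ell}|\nabla v|^2\leq\nu\int_D|\nabla v|^2$.

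It then remains only to assemble, using $\nu\geq 1$ for the boundary term:
\[
b(Rv,Rv)\leq\nu\int_D|\nabla v|^2+\nu\sum_\ell\int_{\partial\mathcal{O}_\ell}v^2=\nu\sum_\ell\int_{D_\ell}|\nabla v|^2+\nu\sum_\ell\int_{\partial\mathcal{O}_\ell}v^2\leq\nu\,c_\epsilon(Rv,Rv),
\]
where in the last step the discarded nonnegative term $\epsilon\sum_\ell\int_{\mathcal{O}_\ell\setminus D_\ell}|\nabla v|^2$ is re-inserted. There is no genuine obstacle in this argument; the only point requiring a little care is the justification of the overlap constant $\nu$ via the neighbor sets, and it is worth noting that the resulting estimate is uniform in $\epsilon\geq 0$, so in particular it also applies to $F=F_0$.
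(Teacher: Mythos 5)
Your proof is correct and follows essentially the same route as the paper: bound $\sum_\ell\int_{\mathcal{O}_\ell}|\nabla v|^2$ by $\nu\int_D|\nabla v|^2$ via finite overlap, rewrite $\int_D|\nabla v|^2$ as $\sum_\ell\int_{D_\ell}|\nabla v|^2$ using the non-overlapping partition, and absorb the nonnegative $\epsilon$-term and the (identical) boundary terms. You merely make explicit two details the paper leaves implicit, namely the derivation of the overlap constant from the neighbor sets in \eqref{eq:def:nu} and the use of $\nu\geq 1$ when carrying the boundary contributions across the inequality.
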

\begin{proof} Observe that, 
\begin{eqnarray*}
\sum_{\ell=1}^{N_S} b_\ell(v|_{\mathcal{O}_\ell},v|_{\mathcal{O}_\ell}) &=&
\sum_{\ell=1}^{N_S} \int_{\mathcal{O}_\ell}|\nabla v|^2\\
&\leq & \nu \int_D |\nabla v|^2\\
&=&\nu 
\sum_{\ell=1}^{N_S}\int_{D_\ell} |\nabla 
v_\ell|^2\\
&\leq& \nu \left( 
\sum_{\ell=1}^{N_S}\int_{D_\ell} |\nabla 
v_\ell|^2+
\epsilon\sum_{\ell=1}^{N_S}\int_{\mathcal{O}_\ell\setminus D_\ell} |\nabla 
v_\ell|^2\right).
\end{eqnarray*}
Therefore, if we include the boundary terms we have 
$b(Rv,Rv)\leq \nu c_\epsilon(Rv,Rv)$.
\end{proof}

Putting together the previous bounds and Theorem 
\ref{final} we can write condition number bounds.
Recall that:
\begin{itemize}
    \item For $E$ define in Section \ref{sec:classical} we have 
    $\|E\|_{\color{black} b,a}\leq \sqrt{\rho(\mu)}\leq \nu$. See Remark \ref{remarkclassicalSD}.
    \item For $E$ define in Section \ref{sec:classical} we have $\|\widehat{E}\|_{\color{black} a,b}\leq  C_E= \nu C_{pu}(1+1/(\tau\delta))$.
    \item  For $\hat{F}$ defined in Lemma \ref{hatF} we have 
    $\|\widehat{F}\|_{\color{black} a,b} \leq  C_F=\nu C_{cut}(1+1/(\tau\delta))$.
    \item From Lemma \ref{lemma:normF} we have  $\|F\|_{\color{black}b,a}\leq 1$, $\|F_\epsilon\|_{\color{black}b,a}\leq 1+\epsilon \|F_{ov}\|_{\color{black}b,a}$.
    \item From Corollary \ref{lemma:normFeps} we have 
    $\| \widehat{F}_\epsilon \|_{\color{black}b,a}\leq \|\widehat{F}\|_{\color{black}b,a}/(1-\epsilon \|F_{ov}\widehat{F}\|_{\color{black}a,a})$.
    \item  From Corollary \ref{lemma:angle} we have $\cos(\alpha_E)=1$.
    \item {\color{black}From} $c_\epsilon$, $0< \epsilon<1$,  defined in and $b$ defined in  Section \ref{sec:classical} we have
    $r_0 =1$.
    \item From Lemma \ref{lemma:r1} we have $r_1=\sqrt{\nu}$.
\end{itemize}
Replacing in \eqref{FETfinal} we get the following result.

\begin{theorem} Let  $E$ and $F_\epsilon$  be defined as before. 
Then we have that $F_\epsilon E^{T,b}$ is invertible and 
$$\kappa(F_\epsilon E^{T,b})
\leq 
\frac{ \sqrt{\nu}  {\color{black}\sqrt{\rho(\mu)}}
C_FC_E
(1+\epsilon \|F_{ov}\|_{\color{black}b,a})
\cos(\alpha_E)
}{(1-\epsilon \|F_{ov}\widehat{F}\|_{\color{black}a,a})} 
\leq 
\frac{ \sqrt{\nu} 
{\color{black}\sqrt{\rho(\mu)}}C_FC_E
(1+\epsilon \|F_{ov}\|_{\color{black}b,a})
}{(1-\epsilon \|F_{ov}\widehat{F}\|_{\color{black}a,a})}.
$$
\end{theorem}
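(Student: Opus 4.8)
The plan is to apply Theorem~\ref{final} directly, with $F$ replaced by $F_\epsilon$ and with the bilinear form $c$ replaced by $c_\epsilon$, and then substitute the explicit bounds for each factor that appears in \eqref{FETfinal}. The verification that the hypotheses of Theorem~\ref{final} hold for this choice is the first order of business: I must check that $E=R^{T,b}|_G$ and $F_\epsilon=R^{T,c_\epsilon}|_G$ come from the same restriction operator $R$ (this is by construction, since $R$ is the restriction operator of Section~\ref{sec:classical}), that $E$ has a stable right inverse $\widehat{E}$ (this is the classical stable decomposition recalled in Remark~\ref{remarkclassicalSD}), and that $F_\epsilon$ has a stable right inverse $\widehat{F}_\epsilon$ (this is Corollary~\ref{lemma:normFeps}, valid for $\epsilon$ small enough). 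The two comparison inequalities required by Theorem~\ref{final}, namely $c_\epsilon(\phi,\phi)\le r_0^2\, b(\phi,\phi)$ on all of $\widehat{G}$ and $b(\psi,\psi)\le r_1^2\, c_\epsilon(\psi,\psi)$ on $\mathcal{R}(R)$, are supplied respectively by the pointwise observation that $c_\epsilon$ integrates a gradient over a subset of $\mathcal{O}_\ell$ with a coefficient $\le 1$ (so $r_0=1$ when $0<\epsilon<1$) and by Lemma~\ref{lemma:r1} (so $r_1=\sqrt{\nu}$).

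Once the hypotheses are in place, Theorem~\ref{final} yields
\[
\kappa(F_\epsilon E^{T,b})\le \|\widehat{F}_\epsilon\|_{a,b}\,\|F_\epsilon\|_{b,a}\,\cos(\alpha_E)\, r_0 r_1\, \|E\|_{b,a}\,\|\widehat{E}\|_{a,b},
\]
and the remaining work is purely a matter of inserting the seven itemized bounds collected just before the statement. Concretely I would substitute $\|E\|_{b,a}\le\sqrt{\rho(\mu)}$, $\|\widehat{E}\|_{a,b}\le C_E$, $\|F_\epsilon\|_{b,a}\le 1+\epsilon\|F_{ov}\|_{b,a}$, $\|\widehat{F}_\epsilon\|_{b,a}\le \|\widehat{F}\|_{b,a}/(1-\epsilon\|F_{ov}\widehat{F}\|_{a,a})\le C_F/(1-\epsilon\|F_{ov}\widehat{F}\|_{a,a})$, $r_0=1$, and $r_1=\sqrt{\nu}$, together with the value $\cos(\alpha_E)=1$. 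This gives the first displayed bound in the theorem; the second displayed bound then follows trivially by dropping the factor $\cos(\alpha_E)=1$ (or keeping it, since it equals one — here I note the statement carries $\cos(\alpha_E)$ in the first expression and drops it in the second, which is consistent because by Corollary~\ref{lemma:angle} the relevant angle gives $\cos(\alpha_E)=1$).

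I would also explicitly remark that invertibility of $F_\epsilon E^{T,b}$ is part of the conclusion of Theorem~\ref{final} (which in turn rests on Theorem~\ref{theorem:nonsymetricgeneral}), so no separate argument is needed; one only must keep the restriction ``$\epsilon$ small enough'' active throughout, inherited from Corollary~\ref{lemma:normFeps}, so that $I+\epsilon F_{ov}\widehat{F}$ is invertible and the denominator $1-\epsilon\|F_{ov}\widehat{F}\|_{a,a}$ is positive.

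The main obstacle I anticipate is not any single inequality but bookkeeping consistency across three different inner products ($a$ on $H$, $b$ and $c_\epsilon$ on $\widehat{G}$) and the operator-norm subscripts that track them: Theorem~\ref{final} is stated with certain norm subscripts ($\|\cdot\|_{a,b}$ versus $\|\cdot\|_{b,a}$), while the itemized bounds are phrased with possibly transposed subscripts, and one must make sure each substituted estimate is the correct directional operator norm (recalling $\|T\|_{a,b}=\|T^{T,b}\|$-type identities used implicitly in Lemma~\ref{lions} and its corollaries). A careful pass matching $\|\widehat{F}_\epsilon\|$ as it appears in Theorem~\ref{final} to the bound from Corollary~\ref{lemma:normFeps}, and likewise for $\|F_\epsilon\|$ and $\|E\|$, resolves this; everything else is a direct substitution.
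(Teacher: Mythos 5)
Your proposal is correct and follows essentially the same route as the paper: the paper's own argument is precisely to invoke Theorem~\ref{final} with $c=c_\epsilon$ and $F=F_\epsilon$, taking $r_0=1$, $r_1=\sqrt{\nu}$ from Lemma~\ref{lemma:r1}, $\|E\|_{b,a}\le\sqrt{\rho(\mu)}$, $\|\widehat{E}\|_{a,b}\le C_E$, $\|F_\epsilon\|_{b,a}\le 1+\epsilon\|F_{ov}\|_{b,a}$, and the bound on $\|\widehat{F}_\epsilon\|$ from Corollary~\ref{lemma:normFeps}, and then substituting into \eqref{FETfinal}. Your additional care in verifying the hypotheses, keeping the ``$\epsilon$ small enough'' restriction active, and tracking the operator-norm subscripts is, if anything, more explicit than the paper's one-line justification.
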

Finally, taking $\epsilon\to 0$ we obtain the 
condition number of the RAS, 
$$\kappa(FE^{T,b}) \leq   \sqrt{\nu} 
\sqrt{\rho(\mu)}C_FC_E .
$$
Note that  we use 
{\color{black}${\sqrt{\rho(\mu)}}\leq \sqrt{\nu}
$} and $C_F\leq C_E$ we have that the bound of 
$\kappa(FE^{T,b})$ is smaller than 
the bound for $\kappa(EE^{T,b}).$ That is, the bound for the condition number of this restricted method is smaller than the bound obtained for AS.

We now investigate the positivity of the 
operator $F_\epsilon E^{T,b}$.
Recalling \eqref{eq:defC} we have 
$
c_\epsilon(B_\epsilon x,y)=b(x,y).
$
We also have that  $\epsilon b(x,x)\leq c_\epsilon(x,x)\leq b(x,x)$. 
In order to use  Theorem \ref{positivity} we need 
\begin{equation}
 \| (I-\Pi_{G,c_\epsilon})Ru \|_{\color{black}c_\epsilon,c_\epsilon} 
 \frac{\sqrt{\epsilon }(1-\epsilon)^2}{\color{black}2(1+\epsilon)} \leq \alpha_R\| \Pi_{G,c_\epsilon}R u\|_{\color{black}c_\epsilon,c_\epsilon}
\quad \mbox{ for all } u\in H.
\end{equation}
The left hand side multiplier of the norm vanishes when 
$\epsilon\to 0$ and therefore we conclude (using Theorem \ref{positivity}) that, for $\epsilon$ small enough, $c_\epsilon(FE^{T,b}u,u)\geq 0$ for all 
$u\in H$.

\section*{Acknowledgments}
The author  wants to thank  the discussions on restricted domain decomposition methods and related topics with  M. Sarkis and M. Dryja. 
Juan Galvis thanks partial support  from the European Union's Horizon 2020 research and innovation programme under
the Marie Sklodowska-Curie grant agreement No 777778 (MATHROCKS).

\bibliographystyle{plain}
\bibliography{references}

\end{document}